\documentclass{amsart}
\usepackage[left=3cm,right=3cm]{geometry}
\usepackage[T1]{fontenc}
\usepackage[utf8]{inputenc}

\usepackage{amsmath}	
\usepackage{amsfonts}   
\usepackage[alphabetic]{amsrefs}
\usepackage{tikz-cd}  

\usepackage{amsthm}
\usepackage{mathtools}
\usepackage{enumerate}   

\usepackage[colorlinks]{hyperref}

\hypersetup{
	colorlinks,
	citecolor=black,
	filecolor=black,
	linkcolor=black,
	urlcolor=black
}

\newtheorem{lemma}{Lemma}[section]
\newtheorem{cor}{Corollary}
\newtheorem{rem}{Remark}
\newtheorem{thm}{Theorem}
\newtheorem{example}{Example}

\newcommand{\IN}{\mathbb{N}}
\newcommand{\IZ}{\mathbb{Z}}
\newcommand{\IQ}{\mathbb{Q}}


\newcommand{\fc}{\mathfrak{c}} 
 
\newcommand{\fL}{\mathfrak{L}} 
\newcommand{\fC}{\mathfrak{C}} 
\newcommand{\fR}{\mathfrak{R}} 
\newcommand{\fn}{\mathfrak{n}}


\newcommand{\CM}{\mathcal{M}}

\newcommand{\CW}{\mathcal{W}}
\newcommand{\CP}{\mathcal{P}}
\newcommand{\CX}{\mathcal{X}} 
\newcommand{\CY}{\mathcal{Y}} 
\newcommand{\CZ}{\mathcal{Z}} 

\newcommand{\Kl}{\mathrm{Kl}}

\newcommand{\SL}{\mathrm{SL}}             
\newcommand{\SP}{\mathrm{SP}} 
\newcommand{\GL}{\mathrm{GL}} 
\newcommand{\SO}{\mathrm{SO}} 

\DefineSimpleKey{bib}{primaryclass}{}
\DefineSimpleKey{bib}{archiveprefix}{}

\BibSpec{arXiv}{%
  +{}{\PrintAuthors}{author}
  +{,}{ \textit}{title}
  +{}{ \parenthesize}{date}
  +{,}{ arXiv }{eprint}
  +{,}{ primary class }{primaryclass}
}

\title{Bounds for Kloosterman sums for $\GL_n$}
\author{Johannes Linn }
\date{}
\thanks{The author thanks Prof.\ Valentin Blomer for helpful discussions and Dr.\ Siu Hang Man for providing parts of the code used in \cite{BM24}.}
\keywords{Kloosterman sums}
\subjclass{11L05}
\address{Max Planck Institute for Mathematics Bonn, Vivatsgasse 7, 53111 Bonn, Germany}
\email{linn@mpim-bonn.mpg.de}

\begin{document}
\begin{abstract}
    In this paper power saving bounds for general Kloosterman sums for all Weyl elements for $\GL_n$ for $n>2$ are proven, improving the trivial bound by Dąbrowski and Reeder. This is achieved by representing the sums in an explicit way as exponential sums and bounding these through applications of the Weil bound.
\end{abstract}
\maketitle

\section{Introduction}
Classical Kloosterman sums are defined by
\[
    S(m,n;c):=\sum_{x\in (\IZ/c\IZ)^*}e\Big(\frac{mx+n\overline{x}}{c}\Big)
\]
for $m,n\in\IZ$ and $c\in\IZ^+$. They were first encountered by Poincaré \cite{Po11} and later rediscovered by Hendrik Kloosterman while studying representations of the form $ax^2+by^2+cz^2+dt^2$ \cite{Kl26} and have become ubiquitous in Number Theory. They appear for example in Fourier coefficients of classical Poincaré series and therefore in the geometric side of relative trace formulae of Petersson-Kuznetsov type \cite{JLXY06}*{pp.\ 147-168}.
Understanding their size is crucial in many applications of these formulas.
In the classical setting, the Weil bound
\[
    |S(m,n;c)|\leq \tau(c)c^{1/2}(m,n,c)^{1/2}
\]
proven by Weil \cite{We48} for squarefree $c$ and completed by Salié \cite{Sa32} for powerful moduli
gives an optimal upper bound for the size of the sums.

Working with Poincaré series or relative trace formulae in a more general setting \cite{AB24}*{Section 5} requires the definition of more general Kloosterman sums as in \cite{Da93} and \cite{DR98}.

Let $G$ be a reductive group, $U$ a maximal unipotent subgroup, $U^-$ the opposite unipotent subgroup, and $T$ a maximal torus. Let $N$ be the normalizer of $T$ in $G$ and let $W:=N/T$ be the Weyl group. For a Weyl element $w\in W$ define
\[
    U_w:=U\cap (w^{-1}U^{-}w).
\]

Let $\Gamma\subseteq G(\IZ_p)$ be a finite index subgroup.
For a diagonal matrix $\fc$ with determinant one and a Weyl element $w\in W$ write $\fn:=\fc\cdot w$ and define
\[
    C(\fn):=U(\mathbb Q_p)\fn U(\mathbb Q_p)\cap \Gamma
\]
and the Kloosterman set
\[
    X(\fn):=U(\mathbb Z_p)\backslash C(\fn)/U_w(\mathbb Z_p)
\]
together with the two projections $u:X(\fn)\rightarrow U(\IZ_p)\backslash U(\IQ_p)$ and $u^\prime:X(\fn)\rightarrow U_w(\IQ_p)/U_w(\IZ_p)$.

Let $\psi$ and $\psi^\prime$ be characters on $U(\IQ_p)$ trivial on $U(\IZ_p)$. The generalized (local) Kloosterman sum is defined as
\[
    \Kl^{\Gamma}_p(\psi,\psi^\prime, \fc\cdot w):=\sum_{u\fn u^\prime\in X(\fn)}\psi(u)\psi^\prime(u^\prime).
\]
If $\Gamma=G(\IZ_p)$, it will be suppressed in the notation.
\begin{rem}
\label{rem:1}
   The Kloosterman sum can also be defined in a slightly different way by using the set
    \[
        X^\prime(\fn):=U_{w^{-1}}(\mathbb Z_p)\backslash C(\fn)/U(\mathbb Z_p).
    \]
    In applications, the definition depends on whether one parametrizes the Bruhat cells by $UTwU_w$ or $U_{w^{-1}}TwU$. Inverting all elements switches between the two definitions and leads to
    \[
        \Kl^{\Gamma}_p(\psi,\psi^\prime, \fc \cdot w)={\Kl^\prime}^{\Gamma}_p(\overline\psi^\prime,\overline\psi, (w^{-1}\fc w)\cdot w^{-1}).
    \]  
\end{rem}

\begin{example}
    The classical Kloosterman sum can be recovered by setting
    \[
        G=\GL_2, \fn=\begin{psmallmatrix}
        0 & -c^{-1}\\ c&0
    \end{psmallmatrix}, \psi\big(\begin{psmallmatrix}
        1&x\\&1
    \end{psmallmatrix}\big)=\Psi(mx), \text{and } \psi^\prime\big(\begin{psmallmatrix}
        1&x\\&1
    \end{psmallmatrix}\big)=\Psi(nx),
    \]
    where $\Psi$ is the standard additive character on $\IQ_p/ \IZ_p$.
\end{example}

Obtaining bounds for generalized Kloosterman sums is surprisingly more complicated than for classical ones. Even just computing the size of $X(\fn)$ and therefore obtaining a trivial bound for the sum is already a deep result by Dąbrowski and Reeder \cite{DR98}*{Proposition 3.4}. For the special Weyl element
{$\begin{psmallmatrix}
    &1\\ I_n &
\end{psmallmatrix}$}
the general Kloosterman sum turns out to be a Hyper-Kloosterman sum \cite{Fr87}*{Theorem B}
and can be bounded by algebro-geometric means. Besides this, non-trivial bounds are known for $\GL_3$ see \cites{BFG88,DF97,St87}, $\GL_4$ see \cites{GSW21,BM24}, $\SP_4$ see \cite{Ma22}, $\SO_{3,3}$ and $\SO_{4,2}$ see \cite{Mu24}, and $\GL_n$ for the long Weyl element see \cites{Mi24,BM24} and the special Weyl element
{$w_*:=\begin{psmallmatrix}
        &&1\\
        &I_{n-2}&\\
        1&&
    \end{psmallmatrix}$} see \cite{BM24}.

The three main approaches in these cases all involve a parametrization or stratification of the Kloosterman set. In \cites{BFG88,Mu24,Fr87} Plücker coordinates are used. This approach seems to fail for larger $n$ as the relations become too complicated to manage.
In \cites{St87,Mi24} the action of the torus on the Kloosterman set by conjugation is exploited to obtain a decomposition of the sum. This approach is also useful for other congruence subgroups besides $\GL_n(\IZ_p)$ see \cite{Mi24}*{Section 5} as the torus action preserves congruence subgroups, but it also requires explicit calculations, which can get very complicated in general. Lastly, Blomer and Man \cite{BM24} use the stratification of Dąbrowski and Reeder \cite{DR98}*{Section 3} to obtain an explicit parametrization of the Kloosterman sum, which can be bounded using a clever application of the Weil bound for $\GL_2$. 

This paper uses the last approach to obtain a parametrization and bounds for the Kloosterman sums for $G=\GL_{N+1}$ for all $N\geq 2$, all admissible Weyl elements, and $\Gamma=G(\IZ_p)$ and $\Gamma=\Gamma_0(q)$.

The main new ingredients are:

A diagram associated to a Weyl element, which is used to make the stratification of Dąbrowski and Reeder \cite{DR98}*{Section 3} explicit for all Weyl elements with two blocks, and to apply multiple Weil bounds to the parametrized sum at the same time.

And an induction on the number of blocks of a Weyl element, which enables the treatment of more than one specific Weyl element at a time.

The main result is the following bound that improves on the trivial bound by Dąbrowski and Reeder by a power saving of $\frac{1}{4l(w)}$, which is proven in Section \ref{sec:4}.

\begin{thm}
\label{thm:5}
    Let $\fn=\fc\cdot w\in\GL_{N+1}$ with exponent vector $r$. For an admissible Weyl element $w$ with length $l(w)$, characters $\psi$ and $\psi^\prime$, and all $\varepsilon>0$ the following holds
    \[
        \Kl_p(\psi,\psi^\prime,\fn)\ll_\varepsilon C^{l(w)/2}\cdot \Big(\prod_{1\leq k\leq N}p^{r_k}\Big)^{1-\frac{1}{4l(w)}+\varepsilon}\ll C^{\frac{N(N+1)}{4}}\cdot \Big(\prod_{1\leq k\leq N}p^{r_k}\Big)^{1-\frac{1}{2N(N+1)}+\varepsilon},
    \]
    with
    \[
        C:=\max_{1\leq j\leq N}\min(|\psi_j|_p^{-1/2},p^{r_j/2}).
    \]
    Also 
    \[
        \Kl_p^{\Gamma_0(q)}(\psi,\psi^\prime,\fn)\ll_\varepsilon C^{l(w)/2}\cdot \Big(\prod_{1\leq k\leq N}p^{r_k}\Big)^{1-\frac{1}{4l(w)}+\varepsilon}\ll C^{\frac{N(N+1)}{4}}\cdot \Big(\prod_{1\leq k\leq N}p^{r_k}\Big)^{1-\frac{1}{2N(N+1)}+\varepsilon}
    \]
    holds for $q$ a power of $p$.
\end{thm}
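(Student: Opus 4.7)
The plan is to extend the approach of Blomer--Man \cite{BM24} from the long Weyl element and $w_*$ to an arbitrary admissible Weyl element. The starting point is the Dąbrowski--Reeder stratification of the Bruhat cell recalled in \cite{DR98}*{Section 3}. Applied to $X(\fn)$, this produces, stratum by stratum, an explicit coordinate system in which every representative $u\fn u^\prime$ is parametrized by a tuple of $p$-adic variables, one for each positive root sent to a negative root by $w$ (so $l(w)$ variables in total). In these coordinates $\psi(u)$ and $\psi^\prime(u^\prime)$ become additive characters of explicit rational functions of the variables, and $\Kl_p(\psi,\psi^\prime,\fn)$ reduces to an honest exponential sum over a $p$-adic box of size roughly $\prod_k p^{r_k}$.

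The core step is then to isolate, on each stratum, a subsum of classical Kloosterman shape $\sum_x \Psi\bigl((ax+b\overline{x})/p^s\bigr)$ along one distinguished coordinate and estimate it with the Weil bound. The reduced expression of $w$ suggests a natural choice of such a coordinate, and a central combinatorial check is that for every admissible $w$ the coefficients $(a,b)$ do not both vanish $p$-adically, so that the Weil bound produces a genuine saving of size $p^{s/2}(a,b,p^s)^{1/2}$. The remaining $l(w)-1$ coordinates are estimated trivially. A dyadic decomposition over the $p$-adic valuations of the variables, followed by balancing the trivial piece against the Weil-improved piece over all possible choices of the distinguished coordinate, yields the advertised saving $\tfrac{1}{4l(w)}$; the factor $4$ instead of $2$ reflects the loss coming from the $\gcd$-term $(a,b,p^s)^{1/2}$, which has to be absorbed trivially in the worst case.

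I expect the main obstacle to be the combinatorial isolation step. For the long Weyl element and for $w_*$ the Kloosterman-type substructure is visibly uniform in the Dąbrowski--Reeder coordinates, but for a general admissible $w$ one has to identify the distinguished variable by an induction on a reduced expression of $w$, keeping track of how the entries of $u^\prime$ are expressed in terms of those of $u$ via the Bruhat identity $u\fn u^\prime\in\Gamma$, and which rational functions therefore appear in the exponent. Once this combinatorial bookkeeping is under control, the application of the Weil bound is essentially mechanical, and the factor $C^{l(w)/2}$ in the theorem emerges from collecting one $|\psi_j|_p^{-1/2}$ or $|\psi^\prime_j|_p^{-1/2}$ per Weil application across the $l(w)$ strata.

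Finally, the case $\Gamma=\Gamma_0(q)$ with $q$ a power of $p$ fits into the same framework with little additional effort: the torus conjugation underlying the Dąbrowski--Reeder stratification preserves $\Gamma_0(q)$, so the congruence conditions translate into extra divisibility constraints on a subset of the coordinates. These shrink the summation box without disturbing the Kloosterman substructure, so the balancing argument above carries over verbatim and produces the same bound.
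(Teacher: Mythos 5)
Your proposal shares the paper's starting point (parametrize the Kloosterman sum via the Dąbrowski--Reeder coordinates, then apply the Weil bound), but the core estimation step you describe is genuinely different from what the paper does, and in its present form it has a gap.

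You propose isolating a single distinguished coordinate, applying Weil to that one sum, and estimating the remaining $l(w)-1$ coordinates trivially, then optimizing over the choice of distinguished coordinate; you attribute the factor $4$ in the exponent to ``absorbing the gcd term trivially in the worst case.'' The difficulty is that the gcd $(a,b,p^s)^{1/2}$ is not a fixed loss: the coefficients $a,b$ appearing in that distinguished Kloosterman subsum are themselves linear combinations of the \emph{adjacent} summation variables, and as those variables range over their boxes there is, a priori, a large subset on which $a$ and $b$ become highly divisible by $p$ and the Weil saving disappears entirely. ``Absorbing trivially'' on that subset gives no power saving, and the dyadic/optimizing argument you sketch does not by itself show that this bad event is rare. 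This is exactly the obstacle the paper confronts, and it is resolved not by choosing one coordinate but by a checkerboard coloring of the $l(w)$ coordinates: the Weil bound is applied simultaneously to all coordinates of one color (roughly $l(w)/2$ of them, which incidentally is where the $C^{l(w)/2}$ character factor truly comes from -- not from one Weil application per stratum), and the resulting average over the gcd terms as the other colored coordinates vary is controlled by a dedicated lemma (a strengthening of \cite{BM24}*{Lemma 7}, proved by induction) showing that $|c_ip^{b_i}+c_{i+1}p^{b_{i+1}'}|_p^{-1/2}$ summed along a chain of variables loses only $\min(b_i,b'_{i+1})/2$ per link. Without some such averaged Weil lemma your balancing step cannot close, and the paper's factor $4$ actually comes from $2$ (Weil) times $2$ (geometric mean of the two colorings), with the $1/l(w)$ coming from a separate combinatorial bookkeeping of which downward diagonals each root contributes to.

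For the $\Gamma_0(q)$ case your sketch also diverges from the paper. The statement that ``torus conjugation preserves $\Gamma_0(q)$'' is true but is the mechanism of the Stevens--Miao approach, not of the Dąbrowski--Reeder stratification used here; the stratification is not built from a torus action. The paper instead applies a transpose-by-long-Weyl-element involution to convert the $\Gamma_0(q)$ sum for $w$ into a $\GL_{N+1}(\IZ_p)$-type sum with a congruence condition on a single coordinate, and observes that $b_{\underline\alpha}(\underline a)\in\Gamma$ if and only if $m_{1k}\ge \ell$ for one specific root. You would need an argument of comparable precision to justify the claim that the congruence constraints merely shrink the box in a way compatible with the Weil applications.
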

\begin{rem}
    For all Weyl elements besides $w_*$, {$\begin{psmallmatrix}
        &1\\ I_N &
        \end{psmallmatrix}$}, and the long Weyl element this represents the first non-trivial bound, and for the long Weyl element it improves the power saving obtained in \cite{BM24} by a factor of $4$.
        
        In terms of the optimal bound: Numerical computations indicate that using the method employed in this paper a saving of $\frac{1}{o(N^2)}$ cannot be obtained. It is also known by \cite{BM24}*{Corollary 3} that the saving cannot be $\frac{1}{o(N)}$ in general.
\end{rem}

\begin{rem}
    The bound in Theorem \ref{thm:5} has a large dependency on the character. As explained in section \ref{subsec:43} a simple dependency as in the $\GL_2$ case can not be obtained if one wishes to optimize the bounds. This can already be seen in \cite{BFG88}*{Property 4.10 (a)} as the sum grows by a factor $p$ and not $p^{1/2}$ if the $\psi_i$ are divisible by $p$. Theorem \ref{thm:6} gives a nice dependency at the cost of a worse power saving.  
\end{rem}
\begin{thm}
\label{thm:6}
With the same notation as in Theorem \ref{thm:5} the following holds
\[
        \Kl_p(\psi,\psi^\prime,\fn)\ll_\varepsilon C\cdot \Big(\prod_{1\leq k\leq N}p^{r_k}\Big)^{1-\frac{1}{2N\cdot l(w)}+\varepsilon}\ll C\cdot \Big(\prod_{1\leq k\leq N}p^{r_k}\Big)^{1-\frac{1}{N^2(N+1)}+\varepsilon},
    \]
    with
    \[
        C:=\max_{1\leq j\leq N}\min(|\psi_j|_p^{-1/2},p^{r_j/2}).
    \]
    Also 
    \[
        \Kl_p^{\Gamma_0(q)}(\psi,\psi^\prime,\fn)\ll_\varepsilon C\cdot \Big(\prod_{1\leq k\leq N}p^{r_k}\Big)^{1-\frac{1}{2n\cdot l(w)}+\varepsilon}\ll C\cdot \Big(\prod_{1\leq k\leq N}p^{r_k}\Big)^{1-\frac{1}{N^2(N+1)}+\varepsilon}
    \]
    holds for $q$ a power of $p$.
\end{thm}

\begin{rem}
     All proofs in this paper do not use that the local field is $\IQ_p$ and the two main ingredients the stratification by Dąbrowski and Reeder and the Weil bound also hold over general non-archimedean local fields. Theorem \ref{thm:5} and \ref{thm:6} and all other results in this paper therefore also hold over general non-archimedean local fields.
\end{rem}

Both theorems uses the following notation, which will be fixed for the rest of the paper.
The torus $T$ denotes the diagonal matrices and $U$ denotes the upper triangular matrices with diagonal equal to $1$.
Two characters $\psi$ and $\psi^\prime$ on $U(\IQ_p)$ which are trivial on $U(\IZ_p)$ are considered and written as
\[
    \psi(u)=\Psi(\sum_{i=1}^N \psi_i u_{ii+1}) \text{ and } \psi^\prime(u)=\Psi(\sum_{i=1}^N \psi^\prime_i u_{ii+1}),
\]
where $u\in U$ and $\Psi$ is the standard additive character of $\IQ_p$ trivial on $\IZ_p$.

For a modulus $\fc=\mathrm{diag}(p^{-r_1},p^{r_1-r_2},...,p^{r_{N-1}-r_N},p^{r_N})$ let $r=(r_1,...,r_N)\in \IZ_{\geq 0}^N$ be the exponent vector.
Each modulus $\fc$ defining a non-zero Kloosterman sum can be written like this up to a factor in $T(\IZ_p)$, which can always be absorbed into the characters.

Not all $(N+1)!$ Weyl elements in $W$ are considered because only Weyl elements of the form
\[
    w=\begin{pmatrix}
        &&&& I_{k_1}\\
        &&&I_{k_2}&\\
        &&...&&\\
        &I_{k_{n-1}}&&&\\
        I_{k_n}&&&&
    \end{pmatrix},
\]
with $I_d$ being the $(d\times d)$-identity matrix, lead to well-defined Kloosterman sums \cite{Fr87}*{p.\ 175}. These elements are called admissible and will be represented by matrices with determinant one, which might not be indicated.

The proof of Theorem \ref{thm:5} uses a parametrization of the Kloosterman sum. To state it some preparation is needed. Let $w\in W$ be as above. Let $\kappa_l:=\sum_{a=1}^l k_a$ and define
\[
    I_{l}:=\{(i,j):1\leq i\leq \kappa_l\leq j\leq \kappa_{l+1}-1\} \text{ and } I_w:=\bigcup_{l=1}^{n-1} I_l.
\]
Let $l(w)$ denote the length of $w$. Then, $|I_w|=l(w)$. The $I_l$ can be thought of in two ways. On the one hand, they parametrize the positive roots $\alpha$ for which $w\alpha$ is negative. On the other hand, $w$ can be factorized as
\[
    w=w_{n-1}\cdot w_{n-2}\cdot ...\cdot w_2\cdot w_1,
\]
with
\[
    w_l=\begin{pmatrix}
    I_{N+1-\kappa_{l+1}}&&\\
    &&I_{\kappa_l}\\
    &I_{k_{l+1}}&
    \end{pmatrix}\ ,
\]
and $I_l$ can be viewed as attaching coordinates to $*$ in 
\[
U_{w_l}=\left\{\begin{pmatrix}
I_{N+1-\kappa_{l+1}}&&\\
&I_{k_{l+1}} &  *\\
&&I_{\kappa_l}
\end{pmatrix}\right\}\ .
\]

For an exponent vector $r$ define
\[
    \CM_w(r):=\{\underline{m}=(m_{i,j})_{(i,j)\in I_w}\in \IN^{l(w)}:\forall 1\leq l\leq N:\sum_{i\leq l\leq j}m_{i,j}=r_l\}, 
\]
For $(i,j)\in I_l$ define
\[
    C_{i,j}:=C_{i,j}(\underline{m},w)=p^{\sum_{a=1}^i m_{a,j}+\sum_{a=j+1}^{\kappa_{(l+1)}-1}m_{i,a}}.
\]
Finally, define
\[
    C_w(\underline{m}):=\{\underline{c}=(c_{i,j})_{(i,j)\in I_w}\in\IZ^{l(w)}:\forall (i,j)\in I_w: 0\leq c_{i,j}<C_{i,j}\text{ and } (c_{i,j},p^{m_{i,j}})=1\},
\]
which is the range of summation of the parametrized Kloosterman sum. From the size of $C_w(\underline{m})$ one can immediately see the trivial bound by Dąbrowski and Reeder.

For a summation variable $c_{i,j}$ define $d_{i,j}\in\IZ$ such that $c_{i,j}\cdot d_{i,j}\equiv 1 \mod C_{i,j}$ if $m_{i,j}>0$, and $d_{i,j}=0$ if $m_{i,j}=0$, which enables the treatment of $m_{i,j}>0$ and $m_{i,j}=0$ at the same time.

The second main result of this paper is the following parametrization of the Kloosterman sum, which is proven in Section \ref{sec:3}.

\begin{thm}
\label{thm:4}
    Let $\fn=\fc\cdot w\in\GL_{N+1}$ with exponent vector $r$ and
        \[
            w=\begin{pmatrix}
                &&&&I_{k_1}\\
                &&&I_{k_2}&\\
                &&...&&\\
                &I_{k_{n-1}}&&&\\
                I_{k_n}&&&&
            \end{pmatrix}.
        \]
        The Kloosterman sum can be parametrized by
        \[
            \Kl_p(\psi,\psi^\prime,\fn)=\sum_{\underline m\in\CM_{w}(r)}\Kl_p(\underline m,\psi,\psi^\prime,w),
        \]
        with $\Kl_p(\underline m,\psi,\psi^\prime,w)$ being defined as
        \begin{align*}
            \sum_{\underline c\in C_{w}(\underline m)}\Psi\big(
            \sum_{q=1}^{n-1}&\big(
            \sum_{i=1}^{\kappa_q-1}\psi_i\sum_{j=\kappa_q}^{\kappa_{(q+1)}-1} c_{i+1,j}d_{i,j}p^{-m_{i,j}+\sum_{l=\kappa_q}^{j-1}(m_{i+1,l}-m_{i,l})+\sum_{l=\kappa_{(q+1)}}^{N}(m_{i+1,l}-m_{i,l})}\\
                 +&\ \psi_{\kappa_q}c_{\kappa_q,\kappa_q}p^{-m_{\kappa_q,\kappa_q}+\sum_{l=\kappa_{(q+1)}}^{N}(m_{\kappa_q+1,l}-m_{\kappa_q,l})}\\
                 +&\sum_{j=\kappa_q+1}^{\kappa_{(q+1)}-1}\psi_j\sum_{i=1}^{\kappa_q} c_{i,j-1}d_{i,j}p^{-m_{i,j}+\sum_{l=i+1}^{\kappa_q}(m_{l,j-1}-m_{l,j})+\sum_{l=\kappa_{(q+1)}}^{N}(m_{j+1,l}-m_{j,l})}\\
                 +&\ \psi^\prime_{N+1-\kappa_q}\sum_{i=1}^{\kappa_{(q-1)}+1}  c_{i,\kappa_{(q+1)}-1}d_{i,\kappa_{(q-1)}}p^{-m_{i,\kappa_{(q+1)}-1}+\sum_{j=1}^{i-1}(m_{j,\kappa_{(q-1)}}-m_{j,\kappa_{(q+1)}-1})}\big)\big).
        \end{align*}
        Here $\kappa_0:=0$ and $d_{i,j}:=1$ if $i>j$.
\end{thm}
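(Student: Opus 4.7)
The plan is to apply the Dąbrowski-Reeder stratification of $X(\fn)$ from \cite{DR98}*{Section 3}. For each admissible $w$ and exponent vector $r$, the tuples $\underline m\in\CM_w(r)$ index the strata, so that
\[
    X(\fn)=\bigsqcup_{\underline m\in\CM_w(r)} X(\underline m),
\]
where $X(\underline m)$ records the $p$-adic valuations of the relevant minors of a matrix $\gamma\in C(\fn)$ attached to the Plücker-type coordinates associated with $w$. This reduces Theorem \ref{thm:4} to identifying the contribution of a single stratum with $\Kl_p(\underline m,\psi,\psi^\prime,w)$.

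Second, for each stratum $X(\underline m)$ I would exhibit an explicit system of representatives. Following the strategy of Blomer-Man in \cite{BM24}, I expect to show that each double coset in $X(\underline m)$ has a unique representative $u\fn u^\prime$ whose entries are read off from a tuple $\underline c\in C_w(\underline m)$: intuitively, $c_{i,j}$ parametrizes the Kloosterman inversion taking place in the $(i,j)$-block of the Bruhat cell, constrained by $(c_{i,j},p^{m_{i,j}})=1$ and by size bound $C_{i,j}$. The range $C_w(\underline m)$ is precisely calibrated so that left multiplication by $U(\IZ_p)$ and right multiplication by $U_w(\IZ_p)$ cut out a fundamental domain. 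Verifying this bijection is the technical heart of the argument; I would proceed by induction on $n$ (the number of blocks of $w$), where at each step one $\GL_2$-type Kloosterman inversion introduces a new $c_{i,j}$.

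Third, once canonical representatives are fixed, I must compute $\psi(u)\psi^\prime(u^\prime)$ for the representative $\gamma=u\fn u^\prime$. Since $\psi$ and $\psi^\prime$ only depend on the superdiagonal entries $u_{i,i+1}$ and $u^\prime_{i,i+1}$, it suffices to compute those. I carry out the Bruhat decomposition block by block, starting from the top-left corner, clearing entries via elementary row and column operations, and tracking how each elimination transforms entries of $\gamma$ into monomials in $c_{i,j}$, $d_{i,j}$, and powers of $p$. The convention $d_{i,j}=0$ for $m_{i,j}=0$ and $d_{i,j}=1$ for $i>j$ allows the whole calculation to be written uniformly. Summing the resulting superdiagonal expressions over $i$ produces precisely the four inner sums appearing inside $\Psi(\cdot)$ in the statement of Theorem \ref{thm:4}: one for the rows below the pivot of each block, one for the pivot entry itself, one for the rows above the next pivot, and one for the contribution of $\psi^\prime$ coming from $u^\prime$.

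The main obstacle is the combinatorial bookkeeping of $p$-adic valuations: the exponents of the form $-m_{i,j}+\sum_{l=K_q}^{j-1}(m_{i+1,l}-m_{i,l})+\sum_{l=K_{q+1}}^{N}(m_{i+1,l}-m_{i,l})$ arise by telescoping the powers of $p$ accumulated when entries in successive positions are cleared, and getting the index ranges precisely right—in particular, handling the boundary cases $i>j$, $m_{i,j}=0$, the pivot position $(K_q,K_q)$, and the transitions between consecutive blocks $K_q$ and $K_{q+1}$—requires careful case analysis. Apart from this bookkeeping, each individual step reduces to an elementary $\GL_2$ Bruhat identity.
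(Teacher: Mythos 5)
Your high-level plan — Dąbrowski--Reeder stratification of $X(\fn)$ indexed by $\underline m$, a system of representatives indexed by $C_w(\underline m)$, Bruhat decomposition of each representative, read off the superdiagonals of the left and right factors, telescope the $p$-powers, and induct on the number of blocks — is in outline the route the paper takes (Lemma~\ref{lem:1}, Theorems~\ref{thm:1}, \ref{thm:2}, \ref{thm:3}, Corollary~\ref{cor:1}). But there are two concrete problems with the way you propose to execute it.

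First, you misidentify the Dąbrowski--Reeder stratification as recording $p$-adic valuations of ``Plücker-type'' minors of a representative $\gamma\in C(\fn)$. That is a different method (used by Bump--Friedberg--Goldfeld, Friedberg, Mujdei), which the introduction explicitly contrasts with the approach taken here. The DR stratification instead parametrizes the cell by the map $b_{\underline\beta}:\underline a\mapsto\prod_i b_{\beta_i}(a_i)$ built from $\SL_2$-embeddings $\phi_\alpha$, and $\underline m$ records the valuations $m_i=\mu(a_i)$ of the parameters $a_i$, not of minors of $\gamma$. This is not cosmetic: the bijection $Y_\lambda/U_w(\IZ_p)\to X(\fn)$ of Lemma~\ref{lem:1} and the $U_w(\IZ_p)$-equivariance of $b_{\underline\beta}$ live entirely in the $\underline a$-coordinates, and it is precisely the factorization of $b_{\underline\beta}(\underline a)$ into $\GL_2$-blocks that gives the elimination procedure something to iterate on. A plan based on Plücker coordinates would have to be rebuilt from scratch.

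Second, your induction is underspecified in a way that would not close. You write that ``at each step one $\GL_2$-type Kloosterman inversion introduces a new $c_{i,j}$,'' but passing from $n$ to $n+1$ blocks adds a whole rectangle $k_n\cdot k_{n+1}$ of new parameters, and already the base case $n=2$ involves all of $k_1 k_2$ of them. That base case is in fact where nearly all of the work lies: to compute the superdiagonal of the left factor $L$ one needs closed-form expressions for \emph{all} entries of $L$, $C$ and $R$ (not just the superdiagonals), because the inductive step of Theorem~\ref{thm:3} multiplies together the right factor of $w'$ with the full $L C R$ of the two-block piece, and the uniqueness of representatives (Theorem~\ref{thm:2}) requires the full right factor $R$. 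The paper's Theorem~\ref{thm:1} supplies exactly this closed form — a sum over lattice paths in a diagram built from $R(w^{-1})$, established by a double induction on the two block sizes with nontrivial cancellations in the $\fL_{i,n+2}$ entry. Describing this as ``bookkeeping of $p$-adic valuations'' that ``reduces to an elementary $\GL_2$ Bruhat identity'' at each step undersells it: without the path-sum formulas (or an equivalent), a row/column elimination has no stable normal form to aim for, and the induction over blocks has no base to stand on.
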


At first glance, this might seem very cumbersome but the later sections will demonstrate a more compact way to think about this parametrization, which is stated in Corollary \ref{cor:2}.

In specific applications, for example when working with a fixed Weyl element, Theorem \ref{thm:4} can be used to obtain sharper bounds than Theorem \ref{thm:5} and is therefore of independent interest.

\section{Setting}
This paper uses the setting of \cite{DR98} specialized to the case $G=\GL_{N+1}$ and the field $\IQ_p$.
This section is intended to be used for future reference. For more detailed definitions, explanations, and proofs the reader is referred to \cite{DR98}.

Let $\mathfrak g$ be the Lie algebra associated to $\GL_{N+1}$. Let $\Delta:=\{\alpha_i:1\leq i\leq N\}$ be the set of simple roots and let
\[
    \Phi^+=\{\alpha_{i,j}:=\sum_{l=i}^j \alpha_i:1\leq i\leq j\leq N\}
\]
be the set of positive roots, $\Phi$ the set of roots, and $\check{\Phi}$ the set of coroots.
For each positive root $\alpha$ there is an embedding 
\[
    \phi_\alpha:\SL_2\rightarrow \GL_{N+1}.
\]
For each simple root $\alpha_i$ let $s_i:=s_{\alpha_i}$ be the reflection associated to that root. In the case of $\GL_{N+1}$ this acts on the other simple roots by
\[
    s_{\alpha_i}(\alpha_j)=\begin{cases} -\alpha_j & \text{ if }  i=j\\
    \alpha_i+\alpha_j & \text{ if } |i-j|=1\\
    \alpha_j & \text{ else} 
    \end{cases}
\]
and can be written using the above embeddings as
\[
    s_{i}=\phi_{\alpha_i}\big(\begin{psmallmatrix}
    & -1\\1&
\end{psmallmatrix}\big)\ .
\]

Let $\check{X}$ be the set of cocharacters of $T$. For such a cocharacter $\lambda=-\sum_{\beta\in\Delta}r_\beta\check{\beta}$ with $r_\beta\in \IZ_{\geq 0}$ define the height by
\[
    \mathrm{ht}(\lambda):=\sum_{\beta\in\Delta} r_\beta.
\]
The height is needed for the computation of the size of the Kloosterman set.
A coroot $\check{\beta}$ can be embedded into $T$ by
\[
    \check{\beta}\mapsto \phi_\beta\Big(\begin{pmatrix}
        p&\\&p^{-1}
    \end{pmatrix}\Big),
\]
which induces a natural embedding from $\check{X}$ into $T$.

For a Weyl element $w\in W$ define
\[
    R(w):=\{\beta\in\Phi^+: w\beta\in-\Phi^+\}.
\]
A minimal way to write a Weyl element $w$ as a product of simple reflections $s_{i}$ is called a reduced expression. 
If $w=s_{i_1}\circ...\circ s_{i_k}$ is such a reduced expression the above set can be written as
\[
    R(w^{-1})=\{\gamma_j:=s_{i_1}\circ...\circ s_{i_{j-1}}(\alpha_{i_j}):1\leq j\leq k\}.
\]
The $\gamma_j$ will be the building blocks in the parametrization for the Kloosterman sum. For an admissable Weyl element as in Theorem \ref{thm:4} the set can be written as
\[
    R(w^{-1})=\{\alpha_{i,j} : (i,j)\in I_w\} \ ,
\]
where each $I_l$ corresponds to the $\gamma_i$ obtained from the factor $w_l$.

For a positive root $\alpha$ and $a\in\IQ_p$ define
\[
    b_\alpha(a):=\begin{cases}
        \phi_\alpha\big(\begin{psmallmatrix}
            & -1\\ 1& a
        \end{psmallmatrix}\big)& \text{ if } a\in\IZ_p\ ,\\
        \phi_\alpha\big(\begin{psmallmatrix}
            c^{-1}&\\p^m&c
        \end{psmallmatrix}\big)&\text{ if } a=cp^{-m}, c\in\IZ_p^*,m>0\ ,
    \end{cases}
\]
and
\[
    b_{\underline\beta}(\underline a):=\prod_{i=1}^l b_{\beta_i}(a_i)
\]
for $\underline \beta=(\beta_1,...,\beta_l)\in (\Phi^+)^l$ and $\underline  a=(a_1,...,a_l)\in\IQ_p^l$.

For the rest of the section fix a Weyl element $w$ and a reduced expression $w=s_{i_1}\circ...\circ s_{i_l}$. The upcoming definitions will implicitly depend on this Weyl element. In the rest of the paper, the current Weyl element will always be clear.

Let $\underline \beta=(\alpha_{i_1},...,\alpha_{i_l})$. Then, \cite{DR98}*{Proposition 3.1} implies that 
\[
    b_{\underline \beta}: \IQ_p^l\rightarrow U(\IZ_p)\backslash\big((U(\IQ_p)wT(\IQ_p)U(\IQ_p))\cap \GL_{N+1}(\IZ_p)\big)
\]
is injective.
\begin{rem}
    \cite{DR98}*{Proposition 3.1} states the injectivity for a right quotient but the proof can be repeated in the same way simply using the other equality in \cite{DR98}*{(1.7)}.
\end{rem}
For $a\in\IQ_p$ define
\[
    \mu(a):=\begin{cases}
        0 &\text{ if } a\in\IZ_p\ ,\\
        -v_p(a)&\text{ else}  ,
    \end{cases}
\]
where $v_p$ is the $p$-adic valuation.
For $\underline m=(m_1,...,m_l)\in\IZ_{\geq0}^l$ set
\[
    Y(\underline m):=\{\underline a=(a_1,...,a_l)\in\IQ_p:\forall 1\leq i\leq l:\mu(a_i)=m_i\}.
\]
\cite{DR98}*{Proposition 3.2} implies the existence of a right group action $*$ of $U(\IZ_p)$ on $Y(\underline m)$ such that $b_{\underline\beta}$ is equivariant under this action, i.e.
\[
    b_{\underline\beta}(u*\underline a)=b_{\underline\beta}(\underline a)u.
\]
\begin{rem}
    Again \cite{DR98}*{Proposition 3.2} defines a left group action but as in \cite{BM24}*{p.\ 1180} this can be modified to a right action.
\end{rem}
For the exact definition of the action and more details the reader is referred to \cite{DR98}*{pp.\ 241-242} and \cite{BM24}*{p.\ 1180}.

Recall that
\[
    R(w^{-1})=\{\gamma_j:=s_{i_1}\circ...\circ s_{i_{j-1}}(\alpha_{i_j}):1\leq j\leq k\}\ .
\]

For any $\lambda\in \check{X}$ define
\[
    M_\lambda = \{\underline m\in\IZ_{\geq 0}^l: \lambda=-\sum_{j=1}^lm_j\check{\gamma}_j\}\ ,
\]
which is empty if $\lambda$ is not a sum of coroots $\check{\gamma}_j$, and define
\[
    Y_\lambda = \bigcup_{\underline m\in M_\lambda} Y_{\underline m}\ .
\]
This enables the statement of the two main results needed from this section.

\begin{lemma}[{\cite{DR98}*{Proposition 3.3}}]
\label{lem:1}
    Let $\lambda\in\check{X}$, $\fn=\lambda w$, and let $w$ and $\underline\beta$ be as before. Then $ b_{\underline \beta}$ descends to a bijection between $Y_\lambda/U_w(\IZ_p)$ and $X(\fn)$.
\end{lemma}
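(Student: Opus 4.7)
The plan is to combine the injectivity of $b_{\underline\beta}$ from Proposition 3.1 of Dąbrowski--Reeder with the equivariance relation $b_{\underline\beta}(u * \underline a) = b_{\underline\beta}(\underline a) u$ from their Proposition 3.2, and to verify three points: (1) $b_{\underline\beta}$ sends $Y_\lambda$ into $C(\fn)$ modulo the implicit left $U(\IZ_p)$; (2) the resulting map descends to $Y_\lambda / U_w(\IZ_p)$ and is injective on that quotient; (3) it is surjective onto $X(\fn)$.

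Step (1) is a torus computation in the Bruhat decomposition. A single factor $b_\beta(a)$ with $a = cp^{-m}$, $c \in \IZ_p^\times$, and $m \geq 1$ has $\SL_2$-Bruhat decomposition with torus part $\phi_\beta(\mathrm{diag}(p^{-m}, p^m))$, which is the coroot contribution $-m\check{\beta}$; when $a \in \IZ_p$ the factor is integral with trivial torus part. Multiplying the $l$ factors in order and sweeping the torus pieces past the simple reflections $s_{\beta_i}$ produces total torus contribution $-\sum_i m_i\, s_{\beta_1}\cdots s_{\beta_{i-1}}(\check{\beta}_i) = -\sum_i m_i\, \check{\gamma}_i$, which equals $\lambda$ precisely when $\underline a \in Y_\lambda$; the leftover unipotent tails sit in $U(\IZ_p)$ on the left and $U(\IQ_p)$ on the right as required, so $b_{\underline\beta}(\underline a) \in C(\fn)$.

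For step (2), suppose $b_{\underline\beta}(\underline a)$ and $b_{\underline\beta}(\underline a')$ represent the same class in $X(\fn)$; then they differ by a left factor in $U(\IZ_p)$ and a right factor $u \in U_w(\IZ_p)$. By equivariance, $b_{\underline\beta}(\underline a) u = b_{\underline\beta}(u * \underline a)$, so modulo left $U(\IZ_p)$ one has $b_{\underline\beta}(\underline a') = b_{\underline\beta}(u * \underline a)$, and injectivity from Proposition 3.1 forces $\underline a' = u * \underline a$. For step (3), the surjectivity component of Proposition 3.1 (that $b_{\underline\beta}$ in fact identifies $\IQ_p^l$ with the entire $U(\IZ_p)$-quotient of the integral Bruhat cell) yields, for any $g \in C(\fn)$, some $\underline a \in \IQ_p^l$ with $g \equiv b_{\underline\beta}(\underline a) \pmod{U(\IZ_p)}$; the torus datum of $g$ being $\lambda$ then forces $\underline a \in Y_\lambda$ by the computation of step (1).

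The main obstacle is the torus bookkeeping in step (1): one must track how successive left multiplications in the ordered product $b_{\beta_1}(a_1) \cdots b_{\beta_l}(a_l)$ twist each local coroot $\check{\beta}_i$ by the cumulative conjugation $s_{\beta_1} \cdots s_{\beta_{i-1}}$, producing exactly the canonical enumeration $R(w^{-1}) = \{\gamma_j\}$ read off from the chosen reduced expression. Everything else is essentially a formal consequence of the two cited propositions.
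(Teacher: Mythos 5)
The paper does not give its own proof of this lemma: it is quoted verbatim as \cite{DR98}*{Proposition 3.3}, so there is nothing in the text to compare against directly. What you have written is a reasonable reconstruction of what the proof in Dąbrowski--Reeder looks like, and the three pieces you identify are exactly the right ones: the torus bookkeeping in the Bruhat cell (to see that $b_{\underline\beta}$ maps $Y_\lambda$ into $C(\fn)$ modulo left $U(\IZ_p)$ and that the Bruhat torus datum pins down $\underline m$), the equivariance of $b_{\underline\beta}$ with respect to the $U_w(\IZ_p)$-action combined with the injectivity of $b_{\underline\beta}$ to get well-definedness and injectivity on the quotient, and the surjectivity of $b_{\underline\beta}$ for the image. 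Your coroot computation $-\sum_i m_i\, s_{\beta_1}\cdots s_{\beta_{i-1}}(\check{\beta}_i) = -\sum_i m_i\,\check{\gamma}_i$ matches the paper's definition of $Y_\lambda$. One small mismatch with the text as written: the paper only records the \emph{injectivity} part of Proposition 3.1, so the ``surjectivity component of Proposition 3.1'' you invoke in step (3) has to be taken directly from \cite{DR98} rather than from anything stated in this paper --- it is indeed part of their statement, so the appeal is legitimate, just not supported by what is quoted here. Your argument is a sketch rather than a detailed verification (for instance the unipotent tails produced in the sweeping are asserted to land in the right places without checking), but the plan is sound and is the same route DR98 take.
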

Furthermore, the exact size of the Kloosterman set is given by the following lemma.
\begin{lemma}[\cite{DR98}*{Proposition 3.4}]
\label{lem:2}
    With $\underline m\in\IZ_{\geq0}^l$ and the same notation as before
    \[
        |Y(\underline m)/U_w(\IZ_p)|=p^{\mathrm{ht}(\lambda)}(1-p^{-1})^{\kappa(\underline m)},
    \]
    where $\kappa(\underline m)$ is the number of non-zero entries in $\underline m$.
\end{lemma}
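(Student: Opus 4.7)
The plan is to induct on the length $l$ of $w$, exploiting the factorization of $b_{\underline\beta}$ and a compatible decomposition of $U_w$ into root subgroups. The base case $l=0$ is trivial: $Y(\emptyset)$ is a singleton, $U_w$ is trivial, and both sides of the formula equal $1$.

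For the inductive step I would write $w=w's_{i_l}$ with $w'$ of length $l-1$, set $\alpha:=\alpha_{i_l}$ so that $\gamma_l=w'(\alpha)$, and use the factorization
\[
    b_{\underline\beta}(\underline a)=b_{\underline\beta'}(a_1,\ldots,a_{l-1})\cdot b_\alpha(a_l).
\]
From $R(w)=\{\alpha\}\sqcup s_\alpha(R(w'))$ one obtains the decomposition $U_w=U_\alpha\cdot(s_\alpha U_{w'}s_\alpha^{-1})$. The right action of $U_\alpha(\IZ_p)$ fixes the first $l-1$ coordinates and, by a direct $\SL_2$ calculation in $\phi_\alpha(\SL_2)$, shifts $a_l$ by $\IZ_p$; this leaves $p^{m_l}(1-p^{-1})$ orbit representatives when $m_l>0$ and a single one when $m_l=0$.

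The residual action of $s_\alpha U_{w'}(\IZ_p)s_\alpha^{-1}$ would then be transferred onto $(a_1,\ldots,a_{l-1})$ by commuting each of its generating root subgroups through the trailing factor $b_\alpha(a_l)$ via the Chevalley relations. The resulting action agrees with the $*$-action of $U_{w'}(\IZ_p)$ on the length-$(l-1)$ analogue $Y(m_1,\ldots,m_{l-1})$, except that commuting past $b_\alpha(a_l)$ rescales the acting lattice by a factor with $p$-adic denominator $p^{m_l(\mathrm{ht}(\gamma_l)-1)}$; hence there are $p^{m_l(\mathrm{ht}(\gamma_l)-1)}$ times as many orbit representatives on the first $l-1$ coordinates as the inductive hypothesis predicts for the naive action. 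Combining this with the $a_l$-count and the identity $\mathrm{ht}(\lambda)=\mathrm{ht}(\lambda+m_l\check\gamma_l)+m_l\,\mathrm{ht}(\gamma_l)$ produces exactly $p^{\mathrm{ht}(\lambda)}(1-p^{-1})^{\kappa(\underline m)}$.

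The hard part is quantifying that lattice rescaling: for each $\beta\in R(w')$ one must commute $U_{s_\alpha(\beta)}(\IZ_p)$ past $b_\alpha(a_l)$ and track the accumulated $p$-adic denominators, and then show that these cumulate to the single exponent $m_l(\mathrm{ht}(\gamma_l)-1)$ governed by the Cartan pairings $\langle \gamma_l,\check\gamma_j\rangle$ for $j<l$. This is the step in which the height of $\gamma_l$ (rather than just $1$) enters the final count, and it can be verified by an explicit rank-two Chevalley computation in each relevant pair of root subgroups together with the matrix formulas defining $b_\alpha$.
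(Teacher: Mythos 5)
The paper does not prove this lemma; it is quoted verbatim from \cite{DR98}*{Proposition 3.4}, so there is no proof in the paper to compare against, and your argument has to stand on its own. The outline is a reasonable route: the factorization $R(w)=\{\alpha\}\sqcup s_\alpha\bigl(R(w')\bigr)$ and hence $U_w=U_\alpha\cdot\bigl(s_\alpha U_{w'}s_\alpha^{-1}\bigr)$ is correct, and the $\SL_2$ computation showing that, modulo the left $U(\IZ_p)$-quotient, the right action of $U_\alpha(\IZ_p)$ sends $a_l\mapsto a_l+z$ is fine and gives the factor $p^{m_l}(1-p^{-1})$ (resp.\ $1$).

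The step you flag as ``the hard part'' is, however, a genuine gap: the assertion that commuting $s_\alpha U_{w'}(\IZ_p)s_\alpha^{-1}$ past $b_\alpha(a_l)$ yields exactly the length-$(l-1)$ $*$-action with a \emph{single} lattice rescaling by $p^{m_l(\mathrm{ht}(\gamma_l)-1)}$ is stated, not derived, and the exponent appears to be reverse-engineered from the target formula. In fact the effect of conjugation by $b_\alpha(a_l)$ is not uniform across the root subgroups: $U_{s_\alpha\beta}(\IZ_p)$, $\beta\in R(w')$, picks up a factor $p^{m_l}$ precisely when $\langle\beta,\check\alpha\rangle\neq 0$ and is unaffected when $\langle\beta,\check\alpha\rangle=0$. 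So what you actually need is the combinatorial identity $\bigl|\{\beta\in R(w'):\langle\beta,\check\alpha\rangle\neq 0\}\bigr|=\mathrm{ht}(w'\alpha)-1$, together with a justification that the orbit count really factors as (count in $a_l$) $\times$ (count in $(a_1,\ldots,a_{l-1})$). On this last point note also that $U_\alpha$ is \emph{not} normal in $U_w$; the normal factor is $s_\alpha U_{w'}s_\alpha^{-1}$, so the clean group-theoretic reduction is to quotient first by the normal part and then by $U_\alpha\cong U_w/\bigl(s_\alpha U_{w'}s_\alpha^{-1}\bigr)$, the reverse of what you wrote. Until the rescaling claim and the factorization of the count are actually established, the proposal is an outline rather than a proof.
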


The proof of Theorem \ref{thm:5} uses one more convention not used in \cite{DR98} and \cite{BM24}, which is necessary to deal with all Weyl elements using the same framework.

For an admissible Weyl element only the following reduced expression is considered. It is defined inductively over the number of blocks $n$ by
 \[
    \begin{pmatrix}
        &&&&I_{k_1}\\
        &&&I_{k_2}&\\
        &&...&&\\
        &I_{k_{n-1}}&&&\\
        I_{k_n}&&&&
    \end{pmatrix}=\begin{pmatrix}
        I_{k_1}&&&&\\
        &&&&I_{k_2}\\
        &&&...&\\
        &&I_{k_{n-1}}&&\\
        &I_{k_n}&&&
    \end{pmatrix}(s_{k_1}\circ...\circ s_N)\circ...\circ(s_{1}\circ...\circ s_{N-k_1+1}).
\]
In other words, first, the rightmost block is moved element by element to the left and then this is repeated with the now rightmost block, and so on. For example the element
{$
    \begin{psmallmatrix}
        &&I_2\\
        &I_3&\\
        1&&
    \end{psmallmatrix}
$}
for $\GL_6$ would have the reduced expression
\[
    (s_5\circ s_4\circ s_3)\circ(s_2\circ s_3 \circ s_4 \circ s_5 \circ s_1\circ s_2 \circ s_3 \circ s_4).
\]

\section{Bruhat decomposition and System of representatives}
\label{sec:3}
    \subsection{Structure of the Proof}
        The first step is to understand the Bruhat decomposition of $b_{\underline{\alpha}}(\underline a)$ for an element $\underline a\in Y(\underline m)$. Assume this decomposition is given by $L\cdot C\cdot R$. Knowing all the entries of $L$ and $R$ one can compute the characters $\psi(L)$ and $\psi^\prime(R)$ and write them explicitly in terms of $\underline a$. The entries of $R$ can also be used to find a system of representatives for the right action of $U_w(\IZ_p)$ on the image under $b_{\underline \alpha}$ of $Y(\underline m)$. Therefore, knowing all entries of $L$ and $R$ completely enables the statement of the sum in an explicit way.

        Understanding the entries of $L$ and $R$ completely for arbitrary Weyl elements and writing them as a closed expression seems too complicated.
        This problem will be overcome in two ways.
        Firstly, $\psi(L)$ (resp.\ $\psi^\prime(R)$) does not depend on all entries of $L$ (resp.\ R) but only on the entries of the first off-diagonal.
        Secondly, to find a system of representatives the entries of $R$ are not needed exactly as long as the uncertainty can be absorbed into the $U_w(\IZ_p)$ action.

        To enable the treatment of all Weyl elements Theorem \ref{thm:5} and Theorem \ref{thm:4} are first proven for ``simple'' Weyl elements, namely Weyl elements consisting of exactly two blocks. For those Weyl elements, the entries of $L$ and $R$ are computed exactly and a system of representatives for the right $U_w(\IZ_p)$ action is found. After that, an induction on the number of blocks is performed to treat all Weyl elements, where the entries of $L$ and $R$ are no longer completely understood but enough to compute the characters and find a system of representatives.

    \subsection{Weyl elements with only two blocks}
        Recall that $b_{\underline \alpha}(\underline a)$ was defined as a product of the $b_\alpha(a)$. For a single factor, the Bruhat decomposition is given by
        \[
            b_\alpha(a) = \phi_\alpha\left(\begin{pmatrix}
                1 & d p^{-m}\\ & 1
            \end{pmatrix}
            \begin{pmatrix}
                 &  -p^{-m}\\ p^m& 
            \end{pmatrix}
            \begin{pmatrix}
                1 & c p^{-m}\\ & 1
            \end{pmatrix}\right)
        \]
        for $a=c$ and $d=m=0$ if $a\in\IZ_p$, and $a=c\cdot p^{-m}$, $c\in\IZ_p^*$ and $d=c^{-1}$ if $v_p(a)<0$. As in \cite{BM24}*{Lemma 3} the $d$ acts as a formal inverse of $c$ to treat both cases at the same time.

        As a running example consider the Weyl element
        \[
            w_0:=\begin{pmatrix}
                &&&1&\\
                &&&&1\\
                1&&&&\\
                &1&&&\\
                &&1&&
            \end{pmatrix} = s_2\circ s_3\circ s_4\circ s_1\circ s_2\circ s_3,
        \]
        which is used to illustrate the abstract formulas and arguments in the following parts.

        To state the entries of $L$ and $R$ in a compact way they are written in terms of paths in the following diagram consisting of the roots
        
        \begin{align*}
            \gamma_i\in R(w^{-1})&=\{\alpha_2,s_2(\alpha_3),s_2s_3(\alpha_4),s_2s_3s_4(\alpha_1,s_2s_3s_4s_1(\alpha_2),s_2s_3s_4s_1s_2(\alpha_3))\}\\
            &=\{\alpha_{2,2},\alpha_{2,3},\alpha_{2,4},\alpha_{1,2},\alpha_{1,3},\alpha_{1,4}\}\ .
        \end{align*}
        
        They are grouped into rows by their smallest summand, meaning $\alpha_{i,j}$ and $\alpha_{i,l}$ are in the same row. They are also ordered according to the usual ordering of the $\gamma_i$. In the case of $w_0$, the diagram looks like
        
        \begin{center}
        \begin{tikzcd}
            & & \\
            2,2 & 2,3 & 2,4 \\
            1,2 & 1,3 & 1,4. \\
            & & 
        \end{tikzcd} 
        \end{center}

        Here the $\alpha$ is omitted from each entry. Therefore, $(2,2)$ stands for $\alpha_{2,2}=\gamma_1$, $(2,3)$ stands for $\alpha_{2,3}=\gamma_2$ and so on.

        The diagram also contains edges between entries $(i,j)$ and $(i,j+1)$, and $(i,j)$ and $(i-1,j)$. These correspond to simple reflections acting on the roots.
        
        For $w_0$ this leads to:
        
        \begin{center}
        \begin{tikzcd}
            & & \\
            2,2 \arrow[r,dash] \arrow[d,dash]& 2,3 \arrow[r,dash] \arrow[d,dash] & 2,4 \arrow[d,dash] \\
            1,2 \arrow[r,dash] & 1,3 \arrow[r,dash] & 1,4. \\
            & & 
        \end{tikzcd}
        \end{center}
        Here the edge from $(2,2)$ to $(2,3)$ corresponds to the reflection $s_3$.
        
        The formulas for the entries of $L, C$, and $R$ contain sums over paths from one vertex in the diagram to another of minimal length. These can be seen as Weyl elements sending $\gamma_i$ to $\gamma_j$ of minimal length. As they are minimal, each path only contains edges going in one of two directions. For such a path $\CP$ the vertices are grouped into sets according to the direction of the adjacent edges. The set $\CP_{ld}$ (resp.\ $\CP_{lr}$,...) contains all vertices with adjacent edges going left and down (resp.\ left and right,...). The treatment of the first and last vertex in the path will be specified in the definitions.
        
        As an example, look at the path $(1,4),(1,3),(2,3),(2,2)$. Here $(2,3)$ has an edge to the left and one downwards and is in $\CP_{ld}$. The sets of $(1,4)$ and $(2,2)$ depend on the specification and for example, $(1,4)$ is in $\CP_{ld}$ if it is treated with an incoming edge from below.

        These definitions enable the statement of the formulas for the Bruhat decomposition of a Weyl element 
        \[
            w_{k,n+1-k}:=\begin{pmatrix}
                &I_k\\
                I_{n+1-k}
            \end{pmatrix}
        \]
        for $\GL_{n+1}$ corresponding to the following diagram:

        \begin{center}
        \begin{tikzcd}
            & & & &\\
            k,k \arrow[r,dash] \arrow[d,dash]& k,k+1 \arrow[r,dash] \arrow[d,dash] & ... \arrow[r,dash] & k,n-1 \arrow[r,dash] \arrow[d,dash] &k,n\arrow[d,dash]\\
            k-1,k \arrow[r,dash] \arrow[d,dash]& k-1,k+1 \arrow[r,dash] \arrow[d,dash] & ... \arrow[r,dash] & k-1,n-1 \arrow[r,dash] \arrow[d,dash] &k-1,n\arrow[d,dash]\\
            ...  \arrow[d,dash]& ...  \arrow[d,dash] & & ...  \arrow[d,dash] &...\arrow[d,dash]\\
            2,k \arrow[r,dash] \arrow[d,dash]& 2,k+1 \arrow[r,dash] \arrow[d,dash] & ... \arrow[r,dash] & 2,n-1 \arrow[r,dash] \arrow[d,dash] &2,n\arrow[d,dash]\\
            1,k \arrow[r,dash]& 1,k+1 \arrow[r,dash] & ... \arrow[r,dash] & 1,n-1 \arrow[r,dash] &1,n.\\
            & & & &
        \end{tikzcd}
        \end{center}

        \begin{thm}
        \label{thm:1}
            Let $w=w_{k,n+1-k}$ be as before. Write $a_{i,j}=c_{i,j}p^{-m_{i,j}}$ with $m_{i,j}=0$ if $a_{i,j}\in\IZ_p$, and $c_{i,j}\in\IZ_p^*$ if $v_p(a_{i,j})<0$. Let $d_{i,j}:=0$ in the first case and $d_{i,j}=c_{i,j}^{-1}$ in the second case. The following holds for the Bruhat decomposition $L\cdot C\cdot R$ of $b_{\underline \alpha}(\underline a)$.

            Let $\CX_{i,j}$ be the set of paths  of minimal length from $(1,k+i-1)$ to $(j,n)$, where the first vertex is treated as a vertex with an incoming edge from below and the last vertex as a vertex with an outgoing edge to the right. 
            Then the right factor is given by
            \[
                R=\begin{pmatrix}
                    I_{n+1-k} &X\\
                    &I_{k}
                \end{pmatrix},
            \]
            with
            \[
                X_{i,j}=(-1)^{i+n+1-k}\sum_{\CP\in \CX_{i,j}}\prod_{a,b\in\CP_{rd}}c_{a,b}\prod_{a,b\in\CP_{lu}}d_{a,b}\cdot p^{-\sum_{a,b\in\CP}m_{a,b}}.
            \]

            Let $\CY_{i,j}$ be the set of paths from $(i,k)$ to $(j,n)$  of minimal length, where the first vertex is treated as a vertex with an incoming edge from the left and the last vertex as a vertex with an outgoing edge to the right.
            
            Let $\CZ_{i,j}$ be the set of paths from $(1,j+k-1)$ to $(i,k)$  of minimal length, where the first vertex is treated as a vertex with an incoming edge from below and the last vertex as a vertex with an outgoing edge to the left.
            
            Let $\CW_{i,j}$ be the set of paths from $(1,j+k-1)$ to $(k,i+k-1)$  of minimal length, where the first vertex is treated as a vertex with an incoming edge from below and the last vertex as a vertex with an outgoing edge upwards.
            
            Then the left factor is given by
            \[
                L=\begin{pmatrix}
                    Y&Z\\&W
                \end{pmatrix},
            \]
            where $Y$ is a unipotent $k\times k$ upper triangular matrix, $W$ is a unipotent $(n+1-k)\times (n+1-k)$ upper triangular matrix, and $Z$ is a $k\times (n+1-k)$ matrix, with entries given by
            \[
                Y_{i,j}=\sum_{\CP\in \CY_{i,j}}\prod_{a,b\in\CP_{rd}}c_{a,b}\prod_{a,b\in\CP_{lu}}d_{a,b}\cdot p^{-\sum_{a,b\in\CP}m_{a,b}+\sum_{l=k}^n m_{j,l}}
            \]
            for $1\leq i\leq j\leq k$ and
            \[
                Z_{i,j}=\sum_{\CP\in \CZ_{i,j}}\prod_{a,b\in\CP_{ru}}c_{a,b}\prod_{a,b\in\CP_{ld}}d_{a,b}\prod_{a,b\in\CP_{lr}}(c_{a,b}d_{a,b}-1)\cdot p^{-\sum_{a,b\in\CP_{lr}}m_{a,b}+\sum_{a,b\in\CP_{du}}m_{a,b}-\sum_{l=1}^{k} m_{l,j+k-1}}
            \]
            for $1\leq i\leq k$, $1\leq j\leq n+1-k$ and
            \[
                W_{i,j}=\sum_{\CP\in \CW_{i,j}}\prod_{a,b\in\CP_{ru}}c_{a,b}\prod_{a,b\in\CP_{ld}}d_{a,b}\prod_{a,b\in\CP_{lr}}(c_{a,b}d_{a,b}-1)\cdot p^{-\sum_{a,b\in\CP_{lr}}m_{a,b}+\sum_{a,b\in\CP_{du}}m_{a,b}-\sum_{l=1}^{k} m_{l,j+k-1}}
            \]
            for $1\leq i\leq j\leq n+1-k$.
            All other entries of $L$ are zero. Notice that $Y_{i,i}$ and $W_{i,i}$ are always $1$.

            The central factor is given by
            \[
                C=\begin{pmatrix}
                    &C_1\\
                    C_2&
                \end{pmatrix},
            \]
            where $C_1$ is a diagonal $k\times k$ matrix and $C_2$ is a diagonal $(n+1-k)\times (n+1-k)$ matrix. Their entries are given by
            \[
                ({C_1})_{i,i}=(-1)^{n+1-k}p^{-\sum_{l=k}^n m_{i,l}}
            \]
            for $1\leq i\leq k$ and
            \[
                ({C_2})_{i,i}=p^{\sum_{l=1}^k m_{l,i+k-1}}
            \]
            for $1\leq i\leq n+1-k$.
        \end{thm}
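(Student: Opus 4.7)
The plan is to prove Theorem \ref{thm:1} by induction on $t$, adding one factor $b_{\gamma_t}(a_t)$ at a time in the order of the fixed reduced expression $(s_k \circ \cdots \circ s_n) \circ (s_{k-1} \circ \cdots \circ s_{n-1}) \circ \cdots \circ (s_1 \circ \cdots \circ s_{n+1-k})$. A direct computation of the $\gamma_t$'s shows that in this order the roots populate the diagram row-by-row starting from the top (row $k$). The inductive hypothesis is that the partial product $B_t := b_{\gamma_1}(a_1) \cdots b_{\gamma_t}(a_t)$ has the Bruhat decomposition described in the theorem, restricted to the sub-diagram containing only $\gamma_1,\dots,\gamma_t$. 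The base case $t = 1$ is immediate from the single-factor $\SL_2$-Bruhat decomposition recorded just before the theorem, on which the one-vertex path sums trivialize.

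For the inductive step, write $B_t = L^{(t-1)} C^{(t-1)} R^{(t-1)} b_{\gamma_t}(a_t)$ and split the new factor as $b_{\gamma_t}(a_t) = \ell_t c_t r_t$. The task is to commute $\ell_t$ leftward through $R^{(t-1)}$, then conjugate it past the Weyl part of $C^{(t-1)}$ into the opposite unipotent where it merges into $L^{(t)}$, and finally to absorb $r_t$ into $R^{(t-1)}$ on the right. The only nontrivial inputs are the $\GL_{n+1}$ root-subgroup commutation relations, which produce a single new factor $u_{\alpha_{i,l}}(\pm xy)$ exactly when the two roots $\alpha_{i,j},\alpha_{k,l}$ sum to a root (i.e.\ $j+1=k$ or $l+1=i$), together with the standard identity that $C u_\alpha(x) C^{-1}$ lies in $U_{w'\alpha}$ rescaled by the torus part of $C = \lambda w'$. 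Each nontrivial commutation corresponds to traversing exactly one edge in the diagram, so iterated commutations assemble precisely the path sets $\CX_{i,j}, \CY_{i,j}, \CZ_{i,j}, \CW_{i,j}$: the inductive step extends each previously admissible path by the new vertex $\gamma_t$ in every diagrammatically compatible way. The central factor then updates by absorbing the diagonal part of $c_t$, producing the advertised $p$-exponents in $(C_1)_{i,i}$ and $(C_2)_{i,i}$.

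The main obstacle will be the combinatorial bookkeeping: verifying that the vertex partitions $\CP_{ru}, \CP_{ld}, \CP_{lr}, \CP_{lu}, \CP_{rd}$, the weights $c_{a,b}, d_{a,b}, (c_{a,b} d_{a,b} - 1)$, and the $p$-exponents all update correctly at each step and that the signs agree. The peculiar factor $(c_{a,b} d_{a,b} - 1)$ along an $\CP_{lr}$-traversal traces back directly to the $\SL_2$ identity recorded before the theorem: its off-diagonal entry is $(cd - 1)p^{-m}$, so whenever $\ell_t$ meets an entry of $R^{(t-1)}$ already living in the same root subgroup, their combined contribution picks up exactly this factor after the diagonal has been normalized. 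A practical way to organize the verification is to treat the three sub-blocks $Y$, $W$, and $Z$ of $L$ separately: $Y$ and $W$ are unipotent and track only the $\CP_{rd}, \CP_{lu}$ data, whereas $Z$ is the cross-block in which the $(cd - 1)$ contributions and the Weyl-torus rescaling interact and most of the case analysis concentrates.
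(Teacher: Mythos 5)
The core mechanism you describe — splitting $b_{\gamma_t}(a_t) = \ell_t c_t r_t$, commuting $\ell_t$ leftward through the right factor via root-subgroup commutation relations, conjugating past the Weyl/torus part of the central factor, and tracing the origin of the $(c_{a,b}d_{a,b}-1)$ coefficients to the $\SL_2$ identity — is the right kind of computation and is in the spirit of the paper's argument. However, your choice of induction order opens a real gap.

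You propose a single induction adding one factor at a time, filling the diagram row by row (top row $k$ first, then row $k-1$, etc.), with the inductive hypothesis that $B_t$ has the Bruhat decomposition of the theorem ``restricted to the sub-diagram'' on $\gamma_1,\dots,\gamma_t$. The problem is that the intermediate Weyl elements $s_{i_1}\cdots s_{i_t}$ are in general not two-block — not even admissible. For instance, in $\GL_4$ with $w_2$, the reduced expression is $(s_2 s_3)(s_1 s_2)$, and after three factors the partial Weyl element is $s_2 s_3 s_1$, whose permutation matrix is not block-anti-diagonal. The theorem's statement is structurally tied to the rectangular diagram and to the two-block form of $w_k$: the block shapes of $L=\begin{psmallmatrix}Y&Z\\&W\end{psmallmatrix}$, $C=\begin{psmallmatrix}&C_1\\C_2&\end{psmallmatrix}$, $R=\begin{psmallmatrix}I&X\\&I\end{psmallmatrix}$, and the path sets $\CX_{i,j},\CY_{i,j},\CZ_{i,j},\CW_{i,j}$ are all indexed against that rectangle and that block structure. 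For a staircase-shaped partial diagram none of these definitions literally apply, so ``the theorem restricted to the sub-diagram'' is not a well-defined inductive hypothesis. You would need to first state (and prove) a genuinely more general closed formula valid for all staircase shapes, which is a substantial extra step your proposal does not supply.

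The paper sidesteps this entirely by a double induction in which every intermediate stage is again a bona fide two-block element: first on $k$ with $n+1-k=1$ fixed (a single column, so each intermediate stage is a $w_{k'}$ with a one-entry bottom block), then on $n+1-k$, adding one whole column of $k$ factors at a time so that every intermediate stage is a full rectangle. The trick in the second induction is to factor $w=w'\cdot(s_{n+1}\circ\cdots\circ s_{n+2-k})$ and treat the new column as a ready-made $LCR$ block supplied by the base case. If you want to keep the one-factor-at-a-time scheme, you must either switch to filling column by column (using the first-induction column result to close off each column before moving on), or prove a staircase version of the theorem to serve as your inductive hypothesis.
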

        \begin{rem}
            The signs in the right and the central factor come from the choice of the representative of the Weyl element.
        \end{rem}
        \begin{rem}
            The factor $(c_{a,b}d_{a,b}-1)$ is either $0$ if $m_{a,b}>0$, or it is $-1$ if $m_{a,b}=0$. This is an artifact of the choice to treat both cases at the same time.
        \end{rem}
        Before proceeding with the proof, the formulas are illustrated using the example Weyl element $w_0$.

        In this case $n=5$ and $k=2$. The left factor has the shape $L=\begin{psmallmatrix}
                Y&Z\\&W
            \end{psmallmatrix},$
        with $Y$ a unipotent $2\times 2$ matrix and 
        \[
            L_{1,2}=Y_{1,2}=c_{2,2}d_{1,2}p^{-m_{1,2}}+c_{2,3}d_{1,3}p^{-m_{1,2}-m_{1,3}+m_{2,2}}+c_{2,4}d_{1,4}p^{-m_{1,2}-m_{1,3}-m_{1,4}+m_{2,2}+m_{2,3}},
        \]
        where each summand corresponds to one of the three paths from $(1,2)$ to $(2,4)$. 
        For the $2\times 3$ matrix $Z$ one example entry is given by
        \[
            L_{2,3} =Z_{2,1}= d_{2,2}p^{-m_{2,2}},
        \]
        which corresponds to the only path from $(1,2)$ to $(2,2)$, and another is given by
        \begin{align*}
            L_{2,5}=Z_{2,3}=&\ c_{1,2}d_{1,4}d_{2,2}(c_{1,3}d_{1,3}-1)p^{-m_{1,3}-m_{1,4}-m_{2,4}}\\
            +&\ c_{1,3}d_{1,4}d_{2,3}(c_{2,2}d_{2,2}-1)p^{-m_{2,2}-m_{1,4}-m_{2,4}}\\
            +&\ d_{2,4}(c_{2,2}d_{2,2}-1)(c_{2,3}d_{2,3}-1)p^{-m_{22,}-m_{2,3}-m_{2,4}},
        \end{align*}
        where the summands correspond to the three different paths from $(1,4)$ to $(2,2)$.

        One example entry of the unipotent $3\times 3$ matrix $W$ is
        \[
            L_{4,5} =W_{1,2}= c_{2,3}d_{2,4}p^{-m_{2,4}}+c_{1,3}d_{1,4}p^{-m_{1,4}-m_{2,4}+m_{2,3}},
        \]
        where the summands correspond to the two paths from $(1,4)$ to $(2,3)$.

        The central factor is given by
        \[
            \begin{pmatrix}
                & & & -p^{-m_{1,2}-m_{1,3}-m_{1,4}} &\\
                & & & & -p^{-m_{2,2}-m_{2,3}-m_{2,4}}\\
                p^{m_{1,2}+m_{2,2}} & & & &\\
                & p^{m_{1,3}+m_{2,3}} & & &\\
                & & p^{m_{1,4}+m_{2,4}} & &
            \end{pmatrix}.
        \]

        For the right factor {$R=\begin{psmallmatrix}
            I_{3} & X\\ &I_2
        \end{psmallmatrix}$} look at two examples:
        \[
            R_{3,4}=X_{3,1}=c_{1,4}p^{-m_{1,4}},
        \]
        which corresponds to the only path from $(1,4)$ to $(1,4)$, and
        \begin{align*}
            R_{1,5}=X_{1,2}=\ &c_{2,2}p^{-m_{1,2}-m_{2,2}-m_{2,3}-m_{2,4}}\\+\ &c_{2,3}d_{1,3}p^{-m_{1,2}-m_{1,3}-m_{2,3}-m_{2,4}}\\+\ &c_{2,4}d_{1,4}p^{-m_{1,2}-m_{1,3}-m_{1,4}-m_{2,4}},
        \end{align*}
        where each summand corresponds to one of the three paths from $(1,2)$ to $(2,4)$.
        
        Observe that all entries next to the diagonal only contain products of at most one $c$ and one $d$. This remains true in the general case and enables the application of multiple Weyl bounds at the same time in Section \ref{sec:4}. Further away from the diagonal, products of more than one $c$ and one $d$ can appear, but this is irrelevant for the characters $\psi$ and $\psi^\prime$.

        \begin{rem}
            Theorem \ref{thm:1} and the upcoming proof can be seen as computing the Bruhat decomposition of a product of matrices. The map $b_{\underline{\alpha}}$ is defined as a product, where the Bruhat decomposition of each factor is quite simple. When figuring out what the Bruhat decomposition of the product is, all the left factors of the individual Bruhat decompositions have to be moved to the left and all the right factors to the right. Moving them conjugates them by the central factors and each other, adding $p$-powers, changing the position of the entries and potentially adding new entries. Afterwards all the (conjugated) left (resp. central or right) factors need to be multiplied. For the right factors this is simple because $U_{w_{k,n+1-k}}$ is commutative. For the left factor this is more complicated resulting in the above formulas. The entries being moved around $U$ resp. $U_w$ corresponds in spirit to moving along paths in the diagram.
            For the Weyl element $s_1\circ s_2$ for $\GL_3$ this would work as follows:
            \begin{align*}
                b_{\alpha_1}(a_{1,1})\cdot b_{\alpha_2}(a_{1,2})=&\begin{pmatrix}
                1 & d_{1,1} p^{-m_{1,1}}&\\ & 1&\\&&1
            \end{pmatrix}
            \begin{pmatrix}
                 &  -p^{-m_{1,1}}&\\ p^{m_{1,1}}& &\\&&1
            \end{pmatrix}
            \begin{pmatrix}
                1 & c_{1,1} p^{-m_{1,1}}&\\ & 1&\\&&1
            \end{pmatrix}
            \\
            &\cdot\begin{pmatrix}
                1 & &\\ & 1&d_{1,2} p^{-m_{1,2}}\\&&1
            \end{pmatrix}
            \begin{pmatrix}
                 1&  &\\ & &-p^{-m_{1,2}}\\&p^{m_{1,2}}&
            \end{pmatrix}
            \begin{pmatrix}
                1 & &\\ & 1&c_{1,1} p^{-m_{1,2}}\\&&1
            \end{pmatrix}
            \\
            =&\begin{pmatrix}
                1 & d_{1,1} p^{-m_{1,1}}&\\ & 1&\\&&1
            \end{pmatrix}
            \begin{pmatrix}
                 &  -p^{-m_{1,1}}&\\ p^{m_{1,1}}& &\\&&1
            \end{pmatrix}
            \begin{pmatrix}
                1 & &c_{1,1}d_{1,2}p^{-m_{1,1}-m_{1,2}}\\ & 1&d_{1,2} p^{-m_{1,2}}\\&&1
            \end{pmatrix}
            \\
            &\cdot\begin{pmatrix}
                1 & c_{1,1} p^{-m_{1,1}}&\\ & 1&\\&&1
            \end{pmatrix}
            \begin{pmatrix}
                 1&  &\\ & &-p^{-m_{1,2}}\\&p^{m_{1,2}}&
            \end{pmatrix}
            \begin{pmatrix}
                1 & &\\ & 1&c_{1,1} p^{-m_{1,2}}\\&&1
            \end{pmatrix}
            \\
            =&\begin{pmatrix}
                1 & d_{1,1} p^{-m_{1,1}}&\\ & 1&\\&&1
            \end{pmatrix}
            \begin{pmatrix}
                1 & &-d_{1,2} p^{-m_{1,2}-m_{1,1}}\\ & 1&c_{1,1}d_{1,2}p^{-m_{1,2}}\\&&1
            \end{pmatrix}
            \begin{pmatrix}
                 &  -p^{-m_{1,1}}&\\ p^{m_{1,1}}& &\\&&1
            \end{pmatrix}
            \\
            &\cdot\begin{pmatrix}
                 1&  &\\ & &-p^{-m_{1,2}}\\&p^{m_{1,2}}&
            \end{pmatrix}
            \begin{pmatrix}
                1 & &c_{1,1} p^{-m_{1,1}-m_{1,2}}\\ & 1&\\&&1
            \end{pmatrix}
            \begin{pmatrix}
                1 & &\\ & 1&c_{1,1} p^{-m_{1,2}}\\&&1
            \end{pmatrix}\\
            =&\begin{pmatrix}
                1 & d_{1,1} p^{-m_{1,1}}&(c_{1,1}d_{1,1}-1)\cdot d_{1,2} p^{-m_{1,2}-m_{1,1}}\\& 1&c_{1,1}d_{1,2}p^{-m_{1,2}}\\&&1
            \end{pmatrix}\\
            &\cdot\begin{pmatrix}
                 &  &-p^{-m_{1,1}-m_{1,2}}\\ p^{m_{1,1}} &&\\&p^{m_{1,2}}&
            \end{pmatrix}
            \begin{pmatrix}
                1 & &c_{1,1} p^{-m_{1,1}-m_{1,2}}\\ & 1&c_{1,1} p^{-m_{1,2}}\\&&1
            \end{pmatrix}
            \end{align*}
        \end{rem}
        
        \begin{proof}
        The proof starts with an induction on the sizes of the two blocks, namely $k$ and $n+1-k$. In each induction step the Bruhat decomposition of the new product will computed in a similar way to the above remark. Start with the case $n=k=1$, which is simply the long Weyl element for $\GL_2$ or in other words just one reflection. As seen at the beginning of the section the Bruhat decomposition in this case is
        \[
            b_\alpha(a_{1,1}) = \begin{pmatrix}
                1 & d_{1,1} p^{-m_{1,1}}\\ & 1
            \end{pmatrix}
            \begin{pmatrix}
                 &  -p^{-m_{1,1}}\\ p^{m_{1,1}}& 
            \end{pmatrix}
            \begin{pmatrix}
                1 & c_{1,1} p^{-m_{1,1}}\\ & 1
            \end{pmatrix}.
        \]
        The diagram contains only one vertex and thereby all paths also only contain this one vertex resulting in 
        $Z_{11}=d_{1,1}p^{-m_{1,1}}$ resp.\ $X_{11}=c_{1,1}p^{-m_{1,1}}$.

        \textbf{Induction on $k$:} 
        Fix $n+1-k=1$ implying $ n=k$ and $w_{k,n+1-k}=\begin{pmatrix}
        &I_k\\1&
        \end{pmatrix}$ corresponding to the diagram:

        \begin{center}
        \begin{tikzcd}
            \\
            k,k \arrow[d,dash]\\
            k-1,k \arrow[d,dash]\\
            ...  \arrow[d,dash]\\
            2,k \arrow[d,dash]\\
            1,k. \\
        \end{tikzcd}
        \end{center}

        Assume that the formulas claimed in the theorem are correct for a fixed value of $k$. To prove them for $k+1$ look at the Weyl element 
        \[
        w_{k,n+1-k}=\begin{pmatrix}
        &I_{k+1}\\1&
        \end{pmatrix}=\begin{pmatrix}
        1&&\\&&I_k\\&1&
        \end{pmatrix}\cdot s_1
        \]
        and apply the induction hypothesis to $\begin{pmatrix}
        1&&\\&&I_k\\&1&
        \end{pmatrix}$
        with renamed variables. The additional reflection $s_1$ contributes
        \[
            b_{\alpha_{1}}(a_{1,1}) = \phi_{\alpha_{1}}\left(\begin{pmatrix}
                1 & d_{1,k+1} p^{-m_{1,1}}\\ & 1
            \end{pmatrix}
            \begin{pmatrix}
                 &  -p^{-m_{1,1}}\\ p^{m_{1,1}}& 
            \end{pmatrix}
            \begin{pmatrix}
                1 & c_{1,k+1} p^{-m_{1,1}}\\ & 1
            \end{pmatrix}\right).
        \]
        Let $\fL,\fC$, and $\fR$ be the Bruhat decomposition for $w_{k+1,n+1-k}$ given by the formulas in the theorem and let
        \[
            L=\begin{pmatrix}
                1&&\\
                &Y&Z\\
                &&W
            \end{pmatrix}, \quad C=\begin{pmatrix}
                1&&\\
                &&C_1\\
                &C_2&
            \end{pmatrix},\quad \text{and } R=\begin{pmatrix}
                1&&\\
                &1&X\\
                &&I_{k-1}
            \end{pmatrix}
        \]
        be the matrices for $w_{k,n+1-k}$ with the renamed variables $x+1,k+1$ instead of $x,k$ and with an additional row on top and column in the front with a single $1$.
        Claim: $\fL\fC\fR=LCR\cdot b_{\alpha_{1}}(a_{11})$.

        \textit{Proof of claim}: Compute:
        \begin{align*}
            &LCR\cdot b_{\alpha_{1}}(a_{1,1})\\
            =& LC\begin{pmatrix}
                1 & d_{1,k+1} p^{-m_{1,k+1}}&-d_{1,k+1}p^{-m_{1,k+1}}X\\ & 1&0\\&&I_{k-1}
            \end{pmatrix}\\ \cdot&\phi_{\alpha_{1}}\left(
            \begin{psmallmatrix}
                 &  -p^{-m_{1,k+1}}\\ p^{m_{1,k+1}}& 
            \end{psmallmatrix}\right)\begin{pmatrix}
                1 & c_{1,k+1}p^{-m_{1,k+1}} &X\cdot p^{-m_{1,k+1}}\\
                &1&0\\
                &&I_{k-1}
            \end{pmatrix}.
        \end{align*}
        Check
        \[
            \fR_{1,2}=(-1)^{2}\sum_{\CP\in \CX_{1,1}}\prod_{a,b\in\CP_{rd}}c_{a,b}\prod_{a,b\in\CP_{lu}}d_{a,b}\cdot p^{-\sum_{a,b\in\CP}m_{a,b}}=c_{1,k+1}p^{-m_{1,k+1}}
        \]
        and
        \begin{align*}
            \fR_{1,j}=&(-1)^{2}\sum_{\CP\in \CX_{1,{j-1}}}\prod_{a,b\in\CP_{rd}}c_{a,b}\prod_{a,b\in\CP_{lu}}d_{a,b}\cdot p^{-\sum_{a,b\in\CP}m_{a,b}}\\
            =&(-1)^{2}\sum_{\CP\in \CX_{2,{j-1}}}\prod_{a,b\in\CP_{rd}}c_{a,b}\prod_{a,b\in\CP_{lu}}d_{a,b}\cdot p^{-\sum_{a,b\in\CP}m_{a,b}}\cdot p^{-m_{1,k+1}}\\
            =&X_{1,j-2}\cdot p^{-m_{1,k+1}},
        \end{align*}
        which proves the correctness of the right factor.
        
        The remaining factors are equal to
        \begin{align*}
            &LC\begin{pmatrix}
                1 & d_{1,k+1} p^{-m_{1,k+1}}&-d_{1,k+1}p^{-m_{1,k+1}}X\\ & 1&0\\&&I_{k-1}
            \end{pmatrix}C^{-1}\cdot C\phi_{\alpha_{1}}\left(
            \begin{pmatrix}
                 &  -p^{-m_{1,k+1}}\\ p^{m_{1,k+1}}& 
            \end{pmatrix}\right)\\
            =&\begin{pmatrix}
                1 & d_{1,k+1}p^{-m_{1,k+1}}X&d_{1,k+1}p^{-\sum_{l=1}^{k+1}m_{l,k+1}}\\&Y&Z\\&&W
            \end{pmatrix}
            \cdot\begin{pmatrix}
                &p^{-m_{1,k+1}}&\\
                &&C_1\\
                p^{m_{1,k+1}}C_2&&
            \end{pmatrix}.
        \end{align*}
        
        Check
        \[
            \fC_{1,2}=(-1)^{1}p^{-\sum_{l=k+1}^{n+1} m_{1,l}}=-p^{-m_{1,k+1}}
        \]
        and
        \[
            \fC_{j-1,j}=(-1)^{1}p^{-\sum_{l=k+1}^{n+1} m_{j-1,l}}=(C_1)_{j-2,j-2}
        \]
        for $2<j\leq k$ and also
        \[
            \fC_{k+1,1}=p^{\sum_{l=1}^{k+1} m_{l,k+1}}=p^{m_{1,k+1}}C_2,
        \]
        which proves the correctness of the central factor.
        
        Finally, check
        \begin{align*}
            \fL_{1,j}=&\sum_{\CP\in \CY_{1,j}}\prod_{a,b\in\CP_{rd}}c_{a,b}\prod_{a,b\in\CP_{lu}}d_{a,b}\cdot p^{-\sum_{a,b\in\CP}m_{a,b}+\sum_{l=k+1}^{n+1} m_{j,l}}\\
            =&d_{1,k+1}p^{-m_{1,k+1}}X_{1,j-1}
        \end{align*}
        for $2\leq j\leq k$ and 
        \begin{align*}
            &\fL_{1,k+1}\\
            =&\sum_{\CP\in \CZ_{1,1}}\prod_{a,b\in\CP_{ru}}c_{a,b}\prod_{a,b\in\CP_{ld}}d_{a,b}\prod_{a,b\in\CP_{lr}}(c_{a,b}d_{a,b}-1)\cdot p^{-\sum_{a,b\in\CP_{lr}}m_{a,b}+\sum_{a,b\in\CP_{du}}m_{a,b}-\sum_{l=1}^{k+1} m_{l,j+k}}\\
            =&d_{1,k+1}p^{-\sum_{l=1}^{k+1}m_{l,k+1}},
        \end{align*}
        which proves the correctness of the left factor because for $i>2$ the entry $\fL_{i,j}$ is given by the same formula as $L_{i,j}$. This proves the claim and concludes the induction on $k$.

        \textbf{Induction on $n+1-k$:}
        The case $n+1-k=1$ was proven for all $k$ above. Assume the formulas are correct for a Weyl element $w_{k,n+1-k}$ and turn now to $w_{k,n+2-k}$, which can be factorized as
        \[
            w_{k,n+2-k}=\begin{pmatrix}
            w_{k,n+1-k}&\\&1
            \end{pmatrix}\circ \begin{pmatrix}
            I_{n+1-k}&&\\&&I_{k}\\&1&
            \end{pmatrix} \ .
        \]
        To prove the correctness of the formulas in this case let $\fL,\fC$ and $\fR$ again be the factors of the Bruhat decomposition associated to $w_{k,n+2-k}$ as claimed in the theorem and apply the induction hypothesis to $w_{k,n+1-k}$ and the result for $n+1-k=1$ to the second factor.
        
        Let
        \[
            L=\begin{pmatrix}
                I_{n+1-k}&&\\
                &Y&Z\\
                &&1
            \end{pmatrix},\ 
            C=\begin{pmatrix}
                I_{n+1-k}&&\\
                & & C_1\\
                &C_2&
            \end{pmatrix},\text{ and }            
            R=\begin{pmatrix}
                I_{n+1-k}&&\\
                &1&X\\
                &&I_k
            \end{pmatrix},
        \]
       be the factors in the Bruhat decomposition of the product corresponding to the second factor and let 
       \[
            L^\prime=\begin{pmatrix}
                Y^\prime & Z^\prime &\\
                & W^\prime &\\
                & & 1
            \end{pmatrix},\ 
            C^\prime=\begin{pmatrix}
                & C^\prime_1&\\
                C^\prime_2 &&\\
                &&1
            \end{pmatrix}, \text{ and }
            R^\prime =\begin{pmatrix}
                I_{n+1-k} & X^\prime &\\
                & I_k &\\
                & & 1
            \end{pmatrix}
        \]
        be the factors in the Bruhat decomposition of the product corresponding to $w_{k,n+1-k}$. Claim: $\fL\fC\fR=L^\prime C^\prime R^\prime\cdot L C R$.

        \textit{Proof of claim}:
        Compute:
        \begin{align*}
            &L^\prime\cdot C^\prime\cdot R^\prime\cdot L\cdot C\cdot R\\
            =&L^\prime\cdot C^\prime \cdot \begin{pmatrix}
                I_{n+1-k} & X^\prime &\\
                & I_k &\\
                & & 1
            \end{pmatrix}\cdot \begin{pmatrix}
                I_{n+1-k}&&\\
                &Y&Z\\
                &&1
            \end{pmatrix}\cdot C\cdot R\\
            =&L^\prime\cdot C^\prime \cdot\begin{pmatrix}
                I_{n+1-k}&&X^\prime\cdot Z\\
                &Y&Z\\
                &&1
            \end{pmatrix}\cdot \begin{pmatrix}
                I_{n+1-k} & X^\prime\cdot Y &\\
                & I_k &\\
                & & 1
            \end{pmatrix}\cdot C\cdot R\\
            =&L^\prime\cdot C^\prime \cdot\begin{pmatrix}
                I_{n+1-k}&&X^\prime\cdot Z\\
                &Y&Z\\
                &&1
            \end{pmatrix}\cdot C \cdot \begin{pmatrix}
                I_{n+1-k} & & X^\prime\cdot Y\cdot C_1\cdot I_k \\
                & 1 &X\\
                & & I_k
            \end{pmatrix}\\
            =&\ L^\prime \cdot\begin{pmatrix}
                C_1^\prime Y{C_1^\prime}^{-1}&&C_1^\prime Z\\
                &I_{n+1-k}&C_2^\prime X^\prime\cdot Z\\
                &&1
            \end{pmatrix}\cdot C^\prime\cdot C\cdot \begin{pmatrix}
                I_{n+1-k} & & X^\prime\cdot Y\cdot C_1\cdot I_k \\
                & 1 &X\\
                & & I_k
            \end{pmatrix}\\
            =&\begin{pmatrix}
                Y^\prime\cdot C_1^\prime Y{C_1^\prime}^{-1}&Z^\prime&Y^\prime C_1^\prime Z +Z^\prime C_2^\prime X^\prime\cdot Z\\
                & W^\prime & W^\prime C_2^\prime X^\prime\cdot Z\\
                &&1
            \end{pmatrix}\cdot C^\prime\cdot C\cdot \begin{pmatrix}
                I_{n+1-k} & & X^\prime\cdot Y\cdot C_1\cdot I_k \\
                & 1 &X\\
                & & I_k
            \end{pmatrix}\\.
        \end{align*}

        \textbf{The central factor:} Check
        \begin{align*}
            \fC_{i,n+2-k+i}=&(-1)^{n+2-k}p^{-\sum_{l=k}^{n+1} m_{i,l}}\\
            =&(-1)^{n+1-k}p^{-\sum_{l=k}^n m_{i,l}}\cdot (-1)^{k+1-k}p^{-\sum_{l=n+1}^{n+1} m_{i,l}}\\
            =&C^\prime_{i,n+1-k+i} C_{n+1-k+i,n+2-k+i}
        \end{align*}
        for $1\leq i\leq k$ and
        \begin{align*}
            \fC_{i,i-k}=p^{\sum_{l=1}^k m_{l,i-1}}\cdot 1=C^\prime_{i,i-k}\cdot C_{i-k,i-k}
        \end{align*}
        for $k<i\leq n+1$ and finally
        \[
            \fC_{n+2,n+2-k}=1\cdot p^{\sum_{l=1}^k m_{l,n+1}}=C^\prime_{n+2,n+2}\cdot C_{n+2,n+2-k},
        \]
        and
        \[
            \fC_{i,j}=0
        \]
        else, which proves the correctness of the central factor.

        Before checking the left and the right factors recall the diagram used to define the formulas

        \begin{center}
          
        \begin{tikzcd}
            & & & &\\
            k,k \arrow[r,dash] \arrow[d,dash]&  ... \arrow[r,dash] & k,n-1 \arrow[r,dash] \arrow[d,dash] &k,n\arrow[d,dash]\arrow[r,dash] & \textbf{k,n+1}  \arrow[d,dash] \\
            k-1,k \arrow[r,dash] \arrow[d,dash]& ... \arrow[r,dash] & k-1,n-1 \arrow[r,dash] \arrow[d,dash] &k-1,n\arrow[d,dash]\arrow[r,dash]& \textbf{k-1,n+1}  \arrow[d,dash] \\
            ...  \arrow[d,dash]& &...  \arrow[d,dash] & ...  \arrow[d,dash] &\textbf{...}\arrow[d,dash]\\
            2,k \arrow[r,dash] \arrow[d,dash] & ... \arrow[r,dash] & 2,n-1 \arrow[r,dash] \arrow[d,dash] &2,n\arrow[d,dash]\arrow[r,dash]& \textbf{2,n+1}  \arrow[d,dash]\\
            1,k \arrow[r,dash] & ... \arrow[r,dash] & 1,n-1 \arrow[r,dash] &1,n\arrow[r,dash]& \textbf{1,n+1},\\
            & & & &
        \end{tikzcd}  
        \end{center}
        
        where the bold column is the part corresponding to the added reflections \[s_{n+1}\circ...\circ s_{n+2-k}=\begin{pmatrix}
            I_{n+1-k}&&\\&&I_{k}\\&1&
            \end{pmatrix}\ .\]
        To avoid confusion between the formulas for $\fL,\fC,\fR$ and $L^\prime,C^\prime,R^\prime$, let $\CX,\CY,\CZ$, and $\CW$ refer to sets of paths in this diagram and $\CX^\prime,\CY^\prime,\CZ^\prime$, and $\CW^\prime$ refer to paths in the diagram without the bold vertices.
        \textbf{The right factor:} Recall the right factor
        \[
            \begin{pmatrix}
                I_{n+1-k} & & X^\prime\cdot Y\cdot C_1 \\
                & 1 &X\\
                & & I_k
            \end{pmatrix}.
        \]
        Before checking the formulas some calculations are needed.
        For $l< j-(n+2-k)$ compute
        \begin{align*}
            X^\prime_{i,l}Y_{l,j-(n+2-k)}=&(-1)^{i+n+1-k}(\sum_{\CP\in \CX_{i,l}^\prime}\prod_{a,b\in\CP_{rd}}c_{a,b}\prod_{a,b\in\CP_{lu}}d_{a,b}\cdot p^{-\sum_{a,b\in\CP}m_{a,b}})\\
            &\cdot c_{j-(n+2-k),n+1}d_{l,n+1}p^{-\sum_{h=l}^{j-(n+2-k)-1}m_{h,n+1}}
        \end{align*}
        and also
        \[
            X^\prime_{i,j-(n+2-k)}Y_{j-(n+2-k),j-(n+2-k)}=(-1)^{i+n+1-k}(\sum_{\CP\in \CX_{i,j-(n+2-k)}^\prime}\prod_{a,b\in\CP_{rd}}c_{a,b}\prod_{a,b\in\CP_{lu}}d_{a,b}\cdot p^{-\sum_{a,b\in\CP}m_{a,b}}).
        \]
        In both cases the $Y$-factor extends the path in $\CX_{i,l}^\prime$ going from $(1,k+i-1)$ to ${(l,n)}$ to the vertex ${(j-(n+2-k),n+1)}$ and multiplying with 
        \[
            ({C_1})_{j-(n+2-k),j-(n+2-k)}=-p^{-m_{j-(n+2-k),n+1}}
        \]
        adds the $p$-power and factor $-1$ that are missing for a summand in the formula of $\fR$. Each path in $\CX_{i,j-(n+2-k)}$ contains an edge from $(l,n)$ to $(l,n+1)$ for some $l$. Splitting the path at this edge gives a path in $\CX_{i,l}^\prime$ and a path only going up from $(l,n+1)$ to $(j-(n+2-k),n+1)$. Therefore, each path in $\CX_{i,j-(n+2-k)}$ is obtained as a summand in such a product $X^\prime_{i,j-(n+2-k)}Y_{j-(n+2-k),j-(n+2-k)}$.
        Thus, check
        \begin{align*}
            \fR_{i,j}=&(-1)^{i+n+2-k}\sum_{\CP\in \CX_{i,j-(n+2-k)}}\prod_{a,b\in\CP_{rd}}c_{a,b}\prod_{a,b\in\CP_{lu}}d_{a,b}\cdot p^{-\sum_{a,b\in\CP}m_{a,b}}\\
            =&\sum_{l=1}^{k}X^\prime_{i,l}Y_{l,j-(n+2-k)}\cdot{C_1}_{j-(n+2-k),j-(n+2-k)}
        \end{align*}
        for $1\leq i<n+2-k<j\leq n+2$. 
        For $i=n+2-k<j\leq n+2$ the formulas directly give
        \[
            \fR_{ij}=X_{1,j-(n+2-k)},
        \]
        which proves the correctness of the right factor.

        \textbf{The left factor:} Recall the left factor:
        \begin{align*}
            \begin{pmatrix}
                Y^\prime\cdot C_1^\prime Y{C_1^\prime}^{-1}&Z^\prime&Y^\prime C_1^\prime Z +Z^\prime C_2^\prime X^\prime\cdot Z\\
                & W^\prime & W^\prime C_2^\prime X^\prime\cdot Z\\
                &&1
            \end{pmatrix}.
        \end{align*}

        For $l<j\leq k$ compute
        \begin{align*}
            Y^\prime_{i,l}Y_{l,j}&=\sum_{\CP\in \CY_{i,l}^\prime}\prod_{a,b\in\CP_{rd}}c_{a,b}\prod_{a,b\in\CP_{lu}}d_{a,b}\cdot p^{-\sum_{a,b\in\CP}m_{a,b}+\sum_{h=k}^n m_{lh}}\cdot c_{j,n+1}d_{l,n+1}p^{-\sum_{h=l}^{j}m_{h,n+1}+m_{j,n+1}}
        \end{align*}
        and for $l=j$ compute
        \begin{align*}
            Y^\prime_{i,j}Y_{j,j}&=\sum_{\CP\in \CY_{i,l}^\prime}\prod_{a,b\in\CP_{rd}}c_{a,b}\prod_{a,b\in\CP_{lu}}d_{a,b}\cdot p^{-\sum_{a,b\in\CP}m_{a,b}+\sum_{h=k}^n m_{l,h}}.
        \end{align*}
        In both cases the $Y$-factor extends the path in $\CY_{i,l}^\prime$ going from $(i,k)$ to $(l,n)$ to the vertex $(j,n+1)$, multiplying this with ${(C_1^\prime)}_{l,l}$ cancels the factor $p^{\sum_{h=k}^n m_{l,h}}$, and multiplying with ${(C_1^\prime)}_{j,j}^{-1}$ adds the missing $p$-power to achieve the following equality for $1\leq i<j\leq k$:
        \begin{align*}
            \fL_{i,j}&=\sum_{\CP\in \CY_{i,j}}\prod_{a,b\in\CP_{rd}}c_{a,b}\prod_{a,b\in\CP_{lu}}d_{a,b}\cdot p^{-\sum_{a,b\in\CP}m_{a,b}+\sum_{l=k}^{n+1} m_{j,l}}\\
            &=\sum_{l=1}^k Y^\prime_{i,l}{(C_1^\prime)}_{l,l}Y_{l,j}{(C_1^\prime)}_{j,j}^{-1}.
        \end{align*}
        As for the right factor, each path in $\CY_{i,j}$ can be split into a path in $\CY_{i,l}^\prime$ and a path only going up from $(l,n+1)$ to $(j,n+1)$. Therefore, each path is obtained as a summand in such a product $Y^\prime_{i,l}{(C_1^\prime)}_{l,l}Y_{l,j}{(C_1^\prime)}_{j,j}^{-1}$.
        
        For $1\leq i\leq k<j\leq n+1$ by definition
        \[
            \fL_{i,j}=Z^\prime_{i,j-k}
        \]
        holds and for $k<i\leq j\leq n+1$ by definition
        \[
            \fL_{i,j}=W^\prime_{i-k,j-k}
        \]
        holds and the only thing left to check is the last column.

        For $i>k$ the equality
        \[
            \fL_{i,n+2}=\sum_{l=1}^{k}\sum_{j=i-k}^{n+1-k} W^\prime_{i-k,j}{(C_2^\prime)}_{j,j}X^\prime_{j,l}Z_{l,1}
        \]
        needs to be proven. The idea is that many of the summands cancel each other and the remaining can be grouped to correspond to the paths in the formula for $\fL_{i,n+2}$.
        
        \textit{The following steps are demonstrated using the example of an induction step from $w_{3,3}$ to $w_{3,4}$. Recall the diagram}
        \begin{center}
        \begin{tikzcd}
            3,3 \arrow[r,dash] \arrow[d,dash]& 3,4 \arrow[r,dash] \arrow[d,dash] & 3,5 \arrow[r,dash] \arrow[d,dash] & \textbf{3,6} \arrow[d,dash] \\
            2,3 \arrow[r,dash] \arrow[d,dash]& 2,4 \arrow[r,dash] \arrow[d,dash]& 2,5 \arrow[r,dash] \arrow[d,dash] & \textbf{2,6} \arrow[d,dash] \\
            1,3 \arrow[r,dash] & 1,4 \arrow[r,dash] & 1,5 \arrow[r,dash] & \textbf{1,6} . 
        \end{tikzcd}
        \end{center}
        
        Fix $1\leq l\leq k$ and $i>k$. The factor $W^\prime_{i-k,j}$ contains a sum over paths from $(1,j+k-1)$ to $(k,i-1)$. For such a path $\CP_W$ let $h(\CP_W)$ be such that $h(\CP_W),j+k-2$ is the first vertex in $\CP_W$ in the $(*,j+k-2)$ column or $h(\CP_W)=k$ if $i-1=j+k-1$.
        The factor $X^\prime_{j,l}$ contains a sum over paths from $(1,j+k-1)$ to $(l,n)$. For such a path $\CP_X$ let $h(\CP_X)$ be such that $h(\CP_X),j+k$ is the first vertex in $\CP_X$ in the $(*,j+k)$ column or $h(\CP_X)=l$ if $j+k-1=n$.
        For a pair of such paths $(\CP_W,\CP_X)$ define 
        \[
            h(\CP_W,\CP_X):=\min(h(\CP_W),h(\CP_X))
        \]
        and let $s(\CP_W,\CP_X)$ be such that $(h(\CP_W,\CP_X),s(\CP_W,\CP_X))$ is the last vertex in $\CP_W$ in the $(h(\CP_W,\CP_X),*)$ row and also let $t(\CP_W,\CP_X)$ be such that $(h(\CP_W,\CP_X),t(\CP_W,\CP_X))$ is the last vertex in $\CP_X$ in the $(h(\CP_W,\CP_X),*)$ row.
        
        \textit{In the example with $n=5$, $k=3$, fix $l=2$ and $i=4$. Now, for $j=2$ there are three paths $w_1,w_2,w_3$ from $(1,4)$ to $(3,3)$ contributing to $W^\prime$ and two paths $x_1,x_2$ from $(1,4)$ to $(2,5)$ contributing to $X^\prime$.}
        \usetikzlibrary{decorations.pathmorphing}
        \begin{center}
        \begin{tikzcd}
            \textbf{3,3} \arrow[r,dashleftarrow,"w_1"] \arrow[d,leftsquigarrow,shift right]\arrow[d,leftarrow,shift left]& 3,4\arrow[d,dashleftarrow] & 3,5  & 3,6 &&3,3 & 3,4  & 3,5  & {3,6}\\
            2,3 \arrow[r,leftarrow,"w_2"] \arrow[d,leftsquigarrow]& 2,4 \arrow[d,dashleftarrow, shift left]\arrow[d,leftarrow, shift right]& 2,5 & {2,6}&& 2,3 & 2,4 \arrow[r,dashrightarrow,"x_1"] \arrow[d,dashleftarrow]& \textbf{2,5} \arrow[d,leftarrow] & {2,6} \\
            1,3 \arrow[r,leftsquigarrow,"w_3"] & \textbf{1,4}  & 1,5& {1,6}&&1,3  & \textbf{1,4} \arrow[r,rightarrow, "x_2"] & 1,5  & {1,6} . 
        \end{tikzcd}
        \end{center}
        
        \textit{For the pair of paths $(w_1,x_1)$ the values would be $h(w_1)=3$, $h(x_1)=2$, $h(w_1,x_1)=2$, $s(w_1,x_1)=4$, and $t(w_1,x_1)=5$ and for the pair of paths $(w_3,x_2)$ the values would be $h(w_3)=1$, $h(x_2)=1$, $h(w_3,x_2)=1$, $s(w_3,x_2)=3$, and $t(w_3,x_2)=5$.}
        
        Now, compute 
        \[
            \sum_{j=i-k}^{n+1-k} W^\prime_{i-k,j}{(C_2^\prime)}_{j,j}X^\prime_{j,l}Z_{l,1}
        \]
        by grouping the pairs of paths first by $h(\CP_W,\CP_X)$ and then by $s(\CP_W,\CP_X)$ and $t(\CP_W,\CP_X)$. Start with the paths with $h(\CP_W,\CP_X)=l$, which implies $t(\CP_W,\CP_X)=n$. For a fixed value of $s=s(\CP_W,\CP_X)$ consider only the parts of the paths between $(1,j+k-1)$ and $(h(\CP_W,\CP_X),s)$ resp.\ between $(1,j+k-1)$ and $(h(\CP_W,\CP_X),t)$. Their contribution is
        \begin{align*}
            &\sum_{j=s+2-k}^{n+1-k}  c_{l,s}d_{l,j+k-1}\prod_{q=s+1}^{j+k-2} (c_{q,l}d_{q,l}-1)p^{-\sum_{q=s+1}^{j+k-2} m_{q,l}+\sum_{q=1}^{l-1} m_{q,j+k-1}-\sum_{q=1}^k m_{q,j+k-1}}\\
            \cdot& (-1)^{j+n+1-k} c_{l,j+k-1}p^{-\sum_{q=1}^{l-1} m_{q,j+k-1}-\sum_{q=j+k-1}^n m_{l,q}}\\
            +& (-1)^{s+1-k+n+1-k} c_{l,s}p^{-\sum_{q=1}^{l-1} m_{q,s}-\sum_{q=s}^n m_{l,q}},
        \end{align*}
        where the last summand corresponds to $j=s+1-k$. After multiplication with $(C_2^\prime)_{j,j}$, this can be rewritten as
        \[
            c_{l,s}\prod_{q=s+1}^{n} (c_{q,l}d_{q,l}-1)p^{-\sum_{q=s+1}^{n} m_{q,l}}\ ,
        \]
        where the summand for a fixed $j$ in the first formula is obtained by multiplying out the factors with $q\geq j+k$ in the second formula and picking the $-1$ for all $q>j+k$ and $c_{q,l}d_{q,l}$ for $q=j+k$.
        
        Together with the remaining part of the $W^\prime$-path and the factor $Z_{l,1}$ this results in the formula for $\fL_{i,n+2}$ for the paths containing $(l,n+1)$ and $(l,s)$ but not $(l,s-1)$. Summing over the possibilities for $s$ and including the $l$-sum gives the desired formula.
        
        \textit{In the example fix $s=4$. There are two pairs of paths with $h=2$ and $s=4$, the pair $(w_1,x_1)$ for $j=2$ and $(w_4,x_3)$ for $j=3$. Their contributions together with the additional factors give the contribution of the path $z_1$.}
        
        \begin{center}
        \begin{tikzcd}
            {3,3} \arrow[r,leftarrow,shift right]\arrow[r,leftarrow,shift left] & 3,4\arrow[d,leftarrow, shift left]\arrow[d,leftarrow, shift right] & 3,5  & 3,6 &&3,3 \arrow[r,dash]& 3,4 \arrow[d,dash] & 3,5  & {3,6}\\
            2,3 & 2,4 \arrow[d,leftarrow, shift right,"w_1"']\arrow[d,dashleftarrow, shift left,"x_1"]\arrow[r,leftarrow, shift left]\arrow[r,dashrightarrow, shift right]& 2,5 \arrow[d,dashleftarrow,"x_3", shift left],\arrow[d,leftarrow,"w_4"', shift right]& {2,6}&& 2,3 & 2,4 \arrow[r,dash,"z_1"] & {2,5} \arrow[r,dash]& {2,6}\arrow[d,dash] \\
            1,3 & {1,4}  & 1,5& {1,6}&&1,3  & {1,4}& 1,5  & {1,6} . 
        \end{tikzcd}
        \end{center}

        Claim: All other summands in $\sum_{j=i-k}^{n+1-k} W^\prime_{i-k,j}{(C_2^\prime)}_{j,j}X^\prime_{j,l}Z_{l,1}$ cancel each other.

        \textit{Proof of claim:} With the same grouping the sum barely changes. The only difference is that the vertex $(h(\CP_W,\CP_X),t)$ has an outgoing edge upwards instead of to the right in the $X^\prime$-path. Fix $h=h(\CP_W,\CP_X)<l$ and $s=s(\CP_W,\CP_X)$ and $t=t(\CP_W,\CP_X)$.
        The contribution of the paths with these parameters between $(1,j+k-1)$ and $(h(\CP_W,\CP_X),s)$ resp.\ between $(1,j+k-1)$ and $(h(\CP_W,\CP_X),t)$ together with the factor $(C_2^\prime)_{j,j}$ is
        \begin{align*}
            &\sum_{j=s+2-k}^{t+1-k}  c_{h,s}d_{h,j+k-1}\prod_{q=s+1}^{j+k-2} (c_{q,h}d_{q,h}-1)p^{-\sum_{q=s+1}^{j+k-2} m_{q,h}+\sum_{q=1}^{h-1} m_{q,j+k-1}-\sum_{q=1}^k m_{q,j+k-1}}\\
            \cdot& (-1)^{j+n+1-k} d_{h,t}c_{h,j+k-1}p^{-\sum_{q=1}^{h-1} m_{q,j+k-1}-\sum_{q=j+k-1}^t m_{h,q}}(C_2^\prime)_{j,j}\\
            +&c_{h,s}d_{h,t}\prod_{q=s+1}^{t-1} (c_{q,h}d_{q,h}-1)p^{-\sum_{q=s+1}^{t-1} m_{q,h}+\sum_{q=1}^{h-1} m_{q,t}-\sum_{q=1}^k m_{q,t}}\\
            \cdot& (-1)^{n+t} p^{-\sum_{q=1}^{h-1} m_{q,t}- m_{h,t}}(C_2^\prime)_{t,t}\\
            +& (-1)^{s+1-k+n+1-k} d_{h,t}c_{l,s}p^{-\sum_{q=1}^{h-1} m_{q,s}-\sum_{q=s}^t m_{h,q}}(C_2^\prime)_{s+1-k,s+1-k},
        \end{align*}
        where a factor $d_{h,t}$ was added for all summands with $j+1-k<t$
        and the factor $c_{h,t}$ was removed for $j+1-k=t$. If $m_{h,t}=0$, this sum is zero because $d_{h,t}=0$ divides the sum. If $m_{h,t}>0$, the sum is zero because $(c_{t,h}d_{t,h}-1)=0$ divides the sum. Thus, the sums over paths with fixed $h,t$, and $s$ cancel each other, and the total contribution of pairs of paths with $h<l$ is zero, which proves the correctness of the formula for $\fL_{i,n+2}$ for $i>k$.
        
        \begin{rem}
            In the above computation, the corner case $i=j$ was not treated separately because $ W_{i,i}=1$.
        \end{rem}
        \textit{In the example fix $h=1$, $s=3$ and $t=4$. There are two pairs of paths which satisfy this, the pair $(w_3,x_1)$ for $j=2$ and the pair $(w_5,x_4)$ for $j=1$. Their contributions to the sum are the same except for the sign, which results in them cancelling. This is caused by the vertex $(1,4)$ having an outgoing edge upwards for both the $x$-paths.}
        \begin{center}
        \begin{tikzcd}
            {3,3} \arrow[d,leftarrow,shift right]\arrow[d,leftarrow,shift left]& 3,4 & 3,5  & 3,6 \\
            2,3 \arrow[d,leftarrow,shift right]\arrow[d,leftarrow,shift left,"w_5"]& 2,4 \arrow[d,dashleftarrow, shift left, "x_1"]\arrow[d,dashleftarrow, shift right]\arrow[r,dashrightarrow, shift left]\arrow[r,dashrightarrow, shift right] & 2,5 & {2,6}\\
            1,3 \arrow[r,leftarrow,"w_3"', shift right]\arrow[r, dashrightarrow, shift left,"x_4"] & \textbf{1,4}  & 1,5& {1,6} . 
        \end{tikzcd}
        \end{center}
        
        Claim: For $i<k$:
        \begin{equation}
            \label{eq:1}
            \fL_{i,n+2}=\sum_{l=i}^k Y^\prime_{i,l}({C_1^\prime})_{l,l}Z_{l,1}+\sum_{l=1}^{k}\sum_{j=1}^{n+1-k} Z^\prime_{i,j}({C_2^\prime})_{j,j}X^\prime_{j,l}Z_{l,1}.
        \end{equation}
        \textit{Proof of claim:} Look at
        \[
            \sum_{l=1}^{i-1}\sum_{j=1}^{n+1-k} Z^\prime_{i,j}({C_2^\prime})_{j,j}X^\prime_{j,l}Z_{l,1}.
        \]
        This can be computed exactly as above with $W^\prime$ replaced by $Z^\prime$ because the only difference, in this case, is the treatment of the last vertex, which cannot be $h,s$ because $h\leq l<i$. Thus, this part of the sum results in the desired formula restricted to the paths containing $(l,n+1)$ but not $(l+1,n+1)$.

        For the summand $l=i$ the argument for cancellation can again be repeated. The contribution of the paths that do not cancel changes because the vertex $i,k$ is treated with an outgoing edge to the left. This results in
        \begin{align*}
            &\sum_{j=k}^n  d_{i,j}\prod_{q=k}^{j-1} (c_{q,i}d_{q,i}-1)p^{-\sum_{q=k}^{j-1} m_{q,i}+\sum_{q=1}^{i-1} m_{q,j}-\sum_{q=1}^k m_{q,j}}\cdot (-1)^{j+n} c_{i,j}p^{-\sum_{q=1}^{i-1} m_{q,j}-\sum_{q=j}^n m_{i,q}}({C_2^\prime})_{j,j}Z_{l,1}.
        \end{align*}
        Adding $Y^\prime_{i,i}{(C_1^\prime)}_{i,i}Z_{i,1}$, which corresponds to the case $l=i$ in the first sum of (\ref{eq:1}), results in
        \[
            \prod_{q=k}^{n} (c_{q,i}d_{q,i}-1)d_{i,n+1}p^{-\sum_{q=k}^n m_{q,i}+\sum_{q=1}^{i-1}m_{q,n+1}-\sum_{q=1}^k m_{q,n+1}},
        \]
        which is the contribution of the path containing $(1,n+1)$ and $(i,n+1)$ and $(i,k)$.

       Fix $l>i$. In the second sum in (\ref{eq:1}) the pairs of paths with $h<i$ cancel each other for the same reasons as before. The remaining pairs with $h=i$ do not cancel but together with the term $Y^\prime_{i,l}({C_1^\prime})_{l,l}Z_{l,1}$ from the first sum in (\ref{eq:1}) they do. This concludes the proof of the correctness of the left factor and thus of Theorem \ref{thm:1}.
        \end{proof}
        
        \begin{thm}
        \label{thm:2}
            Let $w=w_{k,n+1-k}$ be as in Theorem \ref{thm:1}. As in \cite{DR98}*{Proposition 3.1} the map $b_{\underline\alpha}$ is injective from $Y(\underline m)$ to $U(\IZ_p)\backslash C(\fn)$. Define
            \[
                M_{i,j}:={\sum_{l=1}^{i-1}m_{l,j}+\sum_{l=j+1}^{n+1}m_{i,l}+m_{i,j}}.
            \]
            The set
            \[
                C_w(\underline m):=\{\underline c = (c_{i,j})_{1\leq i\leq k\leq j\leq n}\in \IZ^{k\cdot (n+1-k)}:0\leq c_{i,j}< C_{i,j}:=p^{M_{i,j}}\text{ with } (c_{i,j},p^{m_{i,j}})=1\text{ for all } i,j\}
            \]
            parametrizes a system of representatives not equivalent under the right action by $U_{w}(\IZ_p)$ on $Y(\underline m)$. The sizes of $Y(\underline m)/U_{w}(\IZ_p)$ and $X(\fn)$ computed in Lemma \ref{lem:2} imply that
            \[
                \bigcup_{\underline m\in \CM_{w}(r)} C(\underline m)
            \]
            parametrizes a full system of representatives for the right action by $U_{w}(\IZ_p)$ on $U(\IZ_p)\backslash C(\fn)$.
        \end{thm}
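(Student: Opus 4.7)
The plan is to establish two things: the cardinality identity $|C_w(\underline m)| = |Y(\underline m)/U_w(\IZ_p)|$, and the injectivity of the natural map $\Phi\colon C_w(\underline m) \to Y(\underline m)/U_w(\IZ_p)$ sending $\underline c$ to the class of $(c_{i,j} p^{-m_{i,j}})_{(i,j) \in I_w}$. Together these give bijectivity, and the final assertion of the theorem then follows from the decomposition $Y_\lambda = \bigsqcup_{\underline m \in \CM_w(r)} Y(\underline m)$ combined with Lemma \ref{lem:1}.

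For the cardinality, direct counting gives $|C_w(\underline m)| = \prod_{(i,j) \in I_w} p^{M_{i,j}} \cdot (1-p^{-1})^{\kappa(\underline m)}$. Swapping the orders of summation in the definition of $M_{i,j}$ yields
\[
\sum_{(i,j) \in I_w} M_{i,j} = \sum_{(i,j)} m_{i,j}\bigl[(k-i)+(j-k)+1\bigr] = \sum_{(i,j)} m_{i,j}(j-i+1) = \mathrm{ht}(\lambda),
\]
since $\check\gamma_{i,j} = \check\alpha_{i,j}$ has height $j-i+1$. Comparing with Lemma \ref{lem:2} proves the identity.

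For the injectivity, suppose $\Phi(\underline c) = \Phi(\underline c')$. By equivariance of $b_{\underline \alpha}$ there exists $u \in U_w(\IZ_p)$ with $b_{\underline \alpha}(\underline a) u = b_{\underline \alpha}(\underline a')$. Theorem \ref{thm:1} provides Bruhat decompositions $LCR = b_{\underline \alpha}(\underline a)$ and $L'CR' = b_{\underline \alpha}(\underline a')$ sharing the same central factor $C$ (which depends only on $\underline m$). Uniqueness of the Bruhat decomposition with $L \in U$ and $R \in U_w$ (a short argument using $U \cap U^- = \{I\}$) forces $L(\underline c) = L(\underline c')$ and $R(\underline c)\, u = R(\underline c')$. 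Since $U_w$ consists exactly of matrices of shape $\begin{psmallmatrix} I_{n+1-k} & * \\ & I_k \end{psmallmatrix}$, and both right factors have this same shape by Theorem \ref{thm:1}, the relation $Ru = R'$ translates to $X(\underline c') - X(\underline c) \in \IZ_p^{(n+1-k) \times k}$ entrywise. The final step recovers $\underline c$: the ``corner'' entries $X_{1,1}$ and $X_{n+1-k,k}$ admit only a single lattice path each, and thus reduce modulo $\IZ_p$ to $\pm c_{i,j}\, p^{-M_{i,j}}$, pinning down one $c_{i,j}$ exactly thanks to the range constraint in $C_w(\underline m)$; more complex entries are then handled inductively, subtracting the contributions of already-determined $c$'s to isolate each new unknown.

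The main obstacle is this last inductive step. The path-sum formulas of Theorem \ref{thm:1} blend $c$'s and $d$'s in patterns that are not a priori triangular, so identifying an ordering of $(i,j) \in I_w$ along which each new $X_{i,j} \bmod \IZ_p$ isolates a single fresh unknown with an invertible $p$-power coefficient requires careful combinatorial bookkeeping on the lattice-path structure. Should the $X$-formulas alone prove insufficient at some position, the exact constraints $L(\underline c) = L(\underline c')$ arising from the $Y$, $Z$, $W$ blocks of the Bruhat decomposition provide additional independent equations that can be exploited in parallel.
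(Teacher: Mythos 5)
Your plan is the right one and matches the paper's strategy: count $|C_w(\underline m)|$ against Lemma~\ref{lem:2}, then show the representatives are pairwise inequivalent by comparing the right Bruhat factors $R(\underline c)$ and $R(\underline c')$ modulo $\IZ_p$. Your cardinality computation is correct and even more explicit than the paper's (the paper just invokes Lemma~\ref{lem:2} without the height calculation). The reduction to $X(\underline c') - X(\underline c) \in \IZ_p^{(n+1-k)\times k}$ entrywise is also exactly the paper's first move.

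However, the step you yourself flag as ``the main obstacle'' --- finding an ordering of $(i,j) \in I_w$ along which each $X$-entry isolates a single fresh unknown --- is precisely the mathematical content of the theorem, and you leave it unresolved. The paper supplies the missing combinatorics explicitly: set $i' := j+k-(n+1)$, $j' := k+i-1$, and order vertices by $(a,b) < (c,d)$ iff $a < c$, or $a = c$ and $b > d$. Then in the path-sum formula for $X$, the single ``L-shaped'' path from $(1,k+i-1)$ going straight up the column to $(i',j')$ and then straight right to $(i',n)$ contributes exactly $c_{i',j'}\,p^{-M_{i',j'}}$, while \emph{every other} path through $\CX_{i,j}$ involves only variables $c_{a,b}, d_{a,b}, p^{\pm m_{a,b}}$ with $(a,b)$ strictly smaller in this order. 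This monotonicity of the path-sum structure is what makes the induction triangular, so each stage isolates one new $c_{i',j'}$ with coefficient $p^{-M_{i',j'}}$, and the range restriction $0 \le c_{i',j'} < p^{M_{i',j'}}$ forces equality. Without this observation --- which requires inspecting the structure of the lattice paths in $\CX_{i,j}$, not just the corner entries --- the inductive step does not go through, and the fallback you mention (using $L(\underline c) = L(\underline c')$ from the $Y, Z, W$ blocks) is not needed and is not used in the paper. So the proposal correctly scopes the problem but does not close it.
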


        \begin{proof} It only needs to be shown that the given representatives are not equivalent because the total number is equal to the expected number computed in Lemma \ref{lem:2}. Only the right factor of the Bruhat decomposition is relevant because the left action of $U(\IZ_p)$ is already accounted for. Let $R$ (resp.\ $R^{\prime}$) be the right factor for $\underline c$ (resp.\ $\underline c^\prime$) both in $C(\underline m)$. Let $A\in U_w(\IZ_p)$ be such that $RA=R^\prime$. By definition $R,R^\prime$ and $A$ are in $U_w(\IQ_p)$ and thus of the shape
        \[
            \begin{pmatrix}
                I_{n+1-k} & *\\
                0 & I_k
            \end{pmatrix}.
        \]
        Therefore, $RA=R^\prime$ is equivalent to
        \begin{equation}
            \label{eq:7}
            R_{i,j}+A_{i,j}=R^\prime_{i,j}
        \end{equation}
        for $1\leq i\leq n+1-k< j\leq n$. For $i=n+k-1$ and $j=n+2-k$ this is
        \[
        	c_{1,n}\cdot p^{-m_{1,n}}+A_{n+1-k,n+2-k}=c^\prime_{1,n}\cdot p^{-m_{1,n}}.
        \]
        Because $A_{n+1-k,n+2-k}\in\IZ_p$, this implies 
        \[
        	c_{1,n}\equiv c^\prime_{1,n}\mod p^{-m_{1,n}}
        \]
        and by definition of the $c_{i,j}$
        \[
        	c_{1,n}= c^\prime_{1,n}
        \]
        and $A_{n+1-k,n+2-k}=0$.
        
        Define $i^\prime:=j+k-(n+1)$ and $j^\prime:=k+i-1$ and define an ordering on the vertices of the diagram as follows $a,b<c,d$ if $a<c$ or if $a=c$ and $b>d$. 
        For each entry $R_{i,j}$ the path from $1,k+i-1$ going up to $j+k-(n+1),k+i-1$ and then right to $j+k-(n+1),n$ contributes the summand
        \[
            c_{i^\prime, j^\prime}\cdot p^{-\sum_{l=1}^{i^\prime-1}m_{l,j^\prime}-\sum_{l=j^\prime+1}m_{i^\prime, l}-m_{i^\prime, j^\prime}}=c_{i^\prime, j^\prime}\cdot p^{-M_{i^\prime, j^\prime}}.
        \]
        All other paths contributing to $R_{i,j}$ contain only variables $c_{a,b},d_{a,b}$ and $p^{\pm m_{a,b}}$ with $a,b<i^\prime, j^\prime$.
        
        Now, $c_{i^\prime, j^\prime}=c_{i^\prime, j^\prime}^\prime$ is proven by induction on the defined order, where the smallest vertex $1,n$ was treated above.
        By induction all summands besides $c_{i^\prime, j^\prime}\cdot p^{-M_{i^\prime, j^\prime}}$ are already equal for $R$ and $R^\prime$. Thus, (\ref{eq:7}) is equivalent to
        \[
            c_{i^\prime, j^\prime}\cdot p^{-M_{i^\prime, j^\prime}}+A_{i,j}=c^\prime_{i^\prime, j^\prime}\cdot p^{-M_{i^\prime, j^\prime}}.
        \]
        Because $A_{i,j}\in\IZ_p$, this implies
        \[
            c_{i^\prime, j^\prime}\equiv c^\prime_{i^\prime, j^\prime} \mod p^{M_{i^\prime, j^\prime}}
        \]
        and by definition of the $c_{i,j}$
        \[
            c_{i^\prime, j^\prime}=c^\prime_{i^\prime, j^\prime}
        \]
        and $A_{i,j}=0$. This concludes the proof.
        \end{proof}
	Combining Theorem \ref{thm:1} and Theorem \ref{thm:2} gives Theorem \ref{thm:4} for simple Weyl elements as a corollary.
	\begin{cor}
         \label{cor:1}
             For $\fn=\fc\cdot w_{k,n+1-k}\in\GL_{n+1}$ with exponent vector $ r$ the Kloosterman sum can be parametrized by
             \[
                 \Kl_p(\psi,\psi^\prime,\fn)=\sum_{\underline m\in\CM_{w_{k,n+1-k}}}(r)\Kl_p(\underline m,\psi,\psi^\prime,w_{k,n+1-k}),
             \]
             with $\Kl_p(\underline m,\psi,\psi^\prime,w_{k,n+1-k})$ defined by
             \begin{align*}
                 \sum_{\underline c\in C_{w_{k,n+1-k}}}(\underline m)\Psi\big(&\sum_{i=1}^{k-1}\psi_i(\sum_{j=k}^{n} c_{i+1,j}d_{i,j}p^{-m_{i,j}+\sum_{l=k}^{j-1}m_{i+1,l}-m_{i,l}})+\psi_kc_{k,k}p^{-m_{k,k}}\\
                 +&\sum_{i=k+1}^{n}\psi_i(\sum_{j=1}^{k} c_{j,i-1}d_{j,i}p^{-m_{j,i}+\sum_{l=j+1}^{k}m_{l,i-1}-m_{l,i}})+\psi^\prime_{n+1-k}c_{1,n}p^{-m_{1,n}}\big).
             \end{align*}
         \end{cor}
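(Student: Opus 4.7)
The plan is to combine Theorem \ref{thm:1} and Theorem \ref{thm:2}. By Lemma \ref{lem:1} and Theorem \ref{thm:2}, the disjoint union $\bigsqcup_{\underline m\in\CM_{w_k}(r)} C_{w_k}(\underline m)$ parametrizes a complete set of representatives for $X(\fn)$: a tuple $\underline c\in C_{w_k}(\underline m)$ corresponds to $\underline a\in Y(\underline m)$ with $a_{i,j}=c_{i,j}p^{-m_{i,j}}$, and hence to the class of $b_{\underline\alpha}(\underline a)$ in $X(\fn)$. Applying Theorem \ref{thm:1} gives the Bruhat decomposition $L\cdot C\cdot R$ of this element, and a direct check of the diagonal entries of $C$ against those of $c\cdot w_k$, using the defining relations $\sum_{i\leq l\leq j}m_{i,j}=r_l$ of $\CM_{w_k}(r)$, confirms that $C=\fn$. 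Hence $L$ and $R$ are precisely the unipotent factors $u,u^\prime$ in the representative $u\fn u^\prime$, and
\[
    \Kl_p(\psi,\psi^\prime,\fn)=\sum_{\underline m\in\CM_{w_k}(r)}\sum_{\underline c\in C_{w_k}(\underline m)}\psi(L)\psi^\prime(R).
\]

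Because $\psi$ and $\psi^\prime$ depend only on the first off-diagonal entries of their arguments, only the entries $L_{i,i+1}$ for $1\leq i\leq n$ and the single non-vanishing entry $R_{n+1-k,n+2-k}$ are relevant. The block shapes $L=\begin{psmallmatrix}Y&Z\\&W\end{psmallmatrix}$ and $R=\begin{psmallmatrix}I_{n+1-k}&X\\&I_k\end{psmallmatrix}$ from Theorem \ref{thm:1} sort these into four cases: $L_{i,i+1}=Y_{i,i+1}$ for $i<k$, $L_{k,k+1}=Z_{k,1}$, $L_{i,i+1}=W_{i-k,i-k+1}$ for $i>k$, and $R_{n+1-k,n+2-k}=X_{n+1-k,1}$. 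The proof therefore reduces to evaluating these four types of path sums.

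Each of the corresponding path sets is small and explicit: $\CZ_{k,1}$ consists of the single path going straight up from $(1,k)$ to $(k,k)$; $\CX_{n+1-k,1}$ is just the single vertex $(1,n)$; $\CY_{i,i+1}$ contains $n-k+1$ paths from $(i,k)$ to $(i+1,n)$, each determined by the column at which one turns upward; and $\CW_{i-k,i-k+1}$ contains $k$ paths from $(1,i)$ to $(k,i-1)$, each determined by the row at which one turns leftward. For every such path I would classify all vertices by the $\CP_{rd},\CP_{lu},\CP_{ru},\CP_{ld},\CP_{lr},\CP_{du}$ partition, taking care to apply the prescribed first and last vertex boundary treatments of Theorem \ref{thm:1}, and then simplify the resulting $p$-exponent. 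The correction terms $\sum_{l=k}^n m_{j,l}$ and $\sum_{l=1}^k m_{l,j+k-1}$ built into the $Y$, $Z$, $W$ formulas induce telescoping cancellations that leave exactly the exponents appearing in the statement.

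The main obstacle is purely bookkeeping: correctly classifying the boundary vertex types along each path family and verifying the telescoping of the $m$-sums. There is no new conceptual input beyond Theorems \ref{thm:1} and \ref{thm:2}. Once these four off-diagonal entry families have been written out, grouping the character contributions $\Psi\bigl(\sum_i\psi_i L_{i,i+1}+\psi^\prime_{n+1-k}R_{n+1-k,n+2-k}\bigr)$ according to which $\psi_i$ or $\psi^\prime_{n+1-k}$ each summand multiplies produces exactly the four sum groups displayed in Corollary \ref{cor:1}.
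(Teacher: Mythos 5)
Your proposal is correct and takes exactly the route the paper intends: the paper states Corollary~\ref{cor:1} with the one-line remark that it follows by ``combining Theorem \ref{thm:1} and Theorem \ref{thm:2},'' and your write-up fills in precisely the steps that remark suppresses — Lemma~\ref{lem:1} plus Theorem~\ref{thm:2} give the disjoint-union parametrization, the $\CM_{w_k}(r)$ relations make the central factor match $\fn$, and since $\psi,\psi^\prime$ see only the first off-diagonal, the contributions reduce to $Y_{i,i+1}$, $Z_{k,1}$, $W_{i-k,i-k+1}$, and $X_{n+1-k,1}$, whose path sets you have correctly identified (the single straight-up path, the single vertex, the $n-k+1$ one-turn paths, and the $k$ one-turn paths). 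The remaining telescoping of the $m$-exponents is routine bookkeeping, as you say. No gap.
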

	
    \subsection{General Weyl elements}
        The main idea to treat general Weyl elements is an induction on the number of blocks. An additional block is added by partitioning the top right block into two blocks. This adds new variables $c_{i,j}$ that do not depend on the already existing $m_{i,j}$ and the old variables do not depend on the new ones. Here $c_{i,j}$ depending on $m_{i^\prime, j^\prime}$ means that $m_{i^\prime, j^\prime}$ appears in $C_{i,j}$.

        The example Weyl element in this section is $w_1$ defined as follows
        \[
            w_1:=\begin{pmatrix}
                & & & & 1\\
                & & & 1 &\\
                1 & & & &\\
                & 1 & & &\\
                & & 1 & &\\
            \end{pmatrix}=w_0\circ s_4.
        \]
        This Weyl element has $3$ blocks and is written as a Weyl element with two blocks times the permutation which separates the top block into two (in this case just one reflection). This second factor can be interpreted as a Weyl element with two blocks for $\GL_k$, where $k$ is the size of the big block that is separated.

        In general, let a Weyl element $w$ for $\GL_{N+1}$ have more than $2$ blocks, let $n+1$ be the combined size of the top two blocks, and let $k$ be the size of the top block. Write
        \[
            w=w^\prime\circ \begin{pmatrix}
                I_{N+1-(n+1)} &  & \\
                & & I_k\\
                & I_{n+1-k}&
            \end{pmatrix},
        \]
        where $w^\prime$ has the same matrix representation as $w$ except for the top two blocks being replaced by one block of size $n+1$. Therefore, $w^\prime$ has one block less than $w$ and the induction hypothesis can be applied to it. The second factor is itself not an admissible  Weyl element but the bottom right $(n+1)\times(n+1)$ matrix has the shape of an admissible Weyl element for $\GL_{n+1}$ and the results from the previous section can be used to compute the Bruhat decomposition of the corresponding factors of $b_{\underline \alpha}$. Combining both Bruhat decompositions enables the computation of the entries of the first off-diagonal of $L$ and $R$ exactly and the computation of $R$ up to the right action of $U_w(\IZ_p)$. 

        \begin{thm}
        \label{thm:3}
            Let $w$ be a Weyl element for $\GL_{N+1}$ with more than $2$ blocks, let $n+1$ be the combined size of the top two blocks, let $k$ be the size of the top block, and let $f$ be the size of the third block from the top. As above write
            \[
                w=w^\prime\circ \begin{pmatrix}
                    I_{N+1-(n+1)} &  & \\
                    & & I_k\\
                    & I_{n+1-k}&
                \end{pmatrix}.
            \]
            Let $L^\prime\cdot C^\prime\cdot R^\prime$ be the Bruhat decomposition of $b_{\underline\alpha^\prime}$ corresponding to $w^\prime$. Let $L\cdot C\cdot R$ be the Bruhat decomposition corresponding to the Weyl element
            \[
                w_{k,n+1-k}:=\begin{pmatrix}
                    & I_k\\
                    I_{n+1-k} &
                \end{pmatrix}.
            \]
            And let $\fL\cdot\fC\cdot \fR$ be the Bruhat decomposition of $b_{\underline\alpha}(\underline a)$ corresponding to $w$.

            The left factor satisfies:
            \[
                \fL_{i,i+1}=L^\prime_{i,i+1}+L_{i,i+1}\cdot p^{\sum_{j=n+1}^{N-1}m_{i,j}-m_{i+1,j}}
            \]
            for $1\leq i\leq n$ and
            \[
                \fL_{i,i+1}=L^\prime_{i,i+1}
            \]
            for $n+1\leq i\leq N$.
            
            The central factor satisfies:
            \[
                \fC=C^\prime\cdot \begin{pmatrix}
                I_{N+1-(n+1)} &\\
                & C
            \end{pmatrix}.
            \]
            
            The right factor satisfies:
            \[
                \fR_{i,i+1}=R^\prime_{i,i+1}
            \]
            for $1\leq i< N+1-(n+1)$,
            \[
                \fR_{i,i+1}=\sum_{j=1}^{k+1}c_{j,n+f}d_{j,k}\cdot p^{\sum_{l=1}^{j-1}m_{l,k}-m_{l,n+f}-m_{j,n+f}}
            \]
            for $i=N+1-(n+1)$ and
            \[
                \fR_{i,j}=R_{d,e}
            \]
            for $i=N+1-(n+1)+d$ and $j=N+1-(n+1)+e$ with $1\leq d,e\leq n$.
        Furthermore, a system of representatives for the right action of $U_w(\IZ_p)$ can be parametrized as follows. Partition {$\underline m=(\underline m_1,\underline m_2)\in \CM_w(r)$} into two parts belonging to $w^\prime$ and $w_{k,n+1-k}$ respectively. The system of representatives is parametrized by $C_w(\underline m)$, which is defined as
        \[
            C_w(\underline m)=C_{w^\prime}(\underline m_1)\times C_{w_{k,n+1-k}}(\underline m_2).
        \]
        \end{thm}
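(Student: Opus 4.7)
The plan is to induct on the number of blocks of $w$. The base case of two blocks is Corollary \ref{cor:1}. For the inductive step, using the reduced-expression convention from Section \ref{sec:3}, I factor $b_{\underline\alpha}(\underline a) = b_{\underline\alpha'}(\underline a')\cdot b_{\underline\alpha''}(\underline a'')$, where $\underline\alpha'$ realizes the reduced expression for $w'$ and $\underline\alpha''$ realizes the reduced expression for $w''$. Because $w''$ acts trivially on the top $N-n$ rows, the element $b_{\underline\alpha''}(\underline a'')$ sits inside a $\GL_{n+1}$ embedded in the bottom-right corner of $\GL_{N+1}$, so its Bruhat decomposition $L''C''R''$ has $I_{N-n}$ in the top-left block of each factor together with the factors $L_0''C_0''R_0''$ from Theorem \ref{thm:1} (applied to $w_k$) in the bottom-right $(n+1)\times(n+1)$ corner.

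\textbf{Product reduction.} The induction hypothesis yields $b_{\underline\alpha'}(\underline a') = L'C'R'$, so $\fL\fC\fR = L'C'R'\cdot L''C''R''$. Writing $R'$ in block form as $R' = \bigl(\begin{smallmatrix}A & B \\ 0 & D\end{smallmatrix}\bigr)$ relative to the $(N-n)+(n+1)$ split, one computes $R'\cdot L''C''R'' = \bigl(\begin{smallmatrix}A & BM_0 \\ 0 & DM_0\end{smallmatrix}\bigr)$, where $M_0 = L_0''C_0''R_0''$. Putting $L'C'\cdot\bigl(\begin{smallmatrix}A & BM_0 \\ 0 & DM_0\end{smallmatrix}\bigr)$ into Bruhat form then amounts to pushing the extra upper-triangular matrix on the right of $C'$ through it. Writing $C' = D'w'$ with $D'$ diagonal, the non-trivial support of that upper-triangular factor lies inside the top-right identity block of $w'$, on which $w'$ preserves relative order; consequently the conjugated matrix stays in $U(\IQ_p)$ with no diagonal correction, which yields $\fC = C'\cdot C''$ as claimed.

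\textbf{Off-diagonals of $\fL$ and $\fR$.} The three regimes for $\fR_{i,i+1}$ read off directly from the block form: for $i<N-n$ the entries are untouched, so $\fR_{i,i+1} = R'_{i,i+1}$; for $i>N-n$ they lie in the bottom-right block where $R'$ restricts to an identity submatrix, giving $\fR_{i,j} = R_{d,e}$; the single boundary entry at $(N-n, N-n+1)$ is a cross term combining the last column of $R'$ before the identity block with the first column of $L_0''$ and the diagonal of $C'$, expanding to the displayed sum. For $\fL$, entries with $i\ge n+1$ do not see $L''$ and equal $L'_{i,i+1}$. For $1\le i\le n$, the conjugation of the bottom block of $L''$ through $C'$ transports its entries to the top rows: $w'$ sends the bottom $n+1$ rows to the top $n+1$ positions, and the diagonal scaling by $D'$ multiplies the $(i,i+1)$-entry by $D'_{i,i}/D'_{i+1,i+1} = p^{\sum_{j=n+1}^{N-1} m_{i,j} - m_{i+1,j}}$, producing the claimed additive correction.

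\textbf{Representatives and main obstacle.} For the system of representatives, I combine the induction hypothesis on $w'$ with Theorem \ref{thm:2} applied to $w_k$. The new variables $c_{i,j}$ from the $w''$-parametrization appear only in the bottom-right block of $\fR$ and in the boundary entry, so any $U_w(\IZ_p)$-element identifying two candidates splits into an action on the bottom-right block and on its complement; applying the uniqueness argument of Theorem \ref{thm:2} on each piece and using Lemma \ref{lem:2} to match cardinalities gives the bijection with $C_{w'}(\underline m_1)\times C_{w_k}(\underline m_2)$. The principal obstacle is the conjugation step in the computation of $\fL$: one must verify that $w'L''w'^{-1}$ lies in $U(\IQ_p)$ (rather than producing a $U^-(\IQ_p)$-correction that would feed back into the central and right factors) and then identify the $p$-power coming from $D'$ in terms of $\underline m_1$ by a careful read-off of $w'^{-1}(i)$ and $w'^{-1}(i+1)$ inside the block structure of $w'$.
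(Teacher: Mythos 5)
Your proposal follows essentially the same route as the paper: factor $w = w'\circ w_k$, embed the two-block decomposition in the bottom-right $(n+1)\times(n+1)$ corner, multiply $L'C'R'\cdot L''C''R''$ and push the upper-triangular $L$-part of the $w_k$-decomposition leftward through $C'$, verify this conjugate stays in $U(\IQ_p)$ because $w'$ carries the bottom-right $(n+1)\times(n+1)$ block monotonically to the top, and then use Theorem \ref{thm:2} plus Lemma \ref{lem:2} to assemble the system of representatives. Two small points to tighten: first, after writing $R'\cdot L''C''R'' = \begin{psmallmatrix}A & BM_0\\ 0 & DM_0\end{psmallmatrix}$ you should state explicitly that the induction hypothesis (or Theorem \ref{thm:1}) forces $D = I_{n+1}$, and then factor $M_0 = LCR$ so that only $\begin{psmallmatrix}I_M & \\ & L\end{psmallmatrix}$ is conjugated past $C'$ while $\begin{psmallmatrix}I_M & \\ & C\end{psmallmatrix}$ and $\begin{psmallmatrix}A & BM_0\\ & R\end{psmallmatrix}$ remain on the right; as written, "pushing the extra upper-triangular matrix" is ambiguous since $\begin{psmallmatrix}A & BM_0\\ 0 & M_0\end{psmallmatrix}$ itself is not upper triangular. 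Second, the boundary entry $\fR_{N-n,N-n+1}$ is $(V\cdot LCR)_{N-n,1}$, so it is built from a row of $V$ against the first column of $LCR = L\cdot C\cdot R$ (hence involving $C$, not $C'$); your mention of "the diagonal of $C'$" there is a slip. You should also record that $LCR\in\GL_{n+1}(\IZ_p)$ (via \cite{DR98}*{Proposition 3.1}) so that in the representatives argument the conjugated $A_2\cdot(LCR)^{-1}$ remains $p$-integral. None of these affect the validity of the overall scheme, which matches the paper.
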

        \begin{rem}
            The exact entries of the bottom right $(n+1)\times (n+1)$ block of $\fR$ can be computed using Theorem \ref{thm:1}. This is not needed to compute the sum but will be used in the induction below. The same holds for the entries of $\fC$.
        \end{rem}
        
        \begin{proof} For shorter notation let $M:=N+1-(n+1)$. The factorization $w=w^\prime\circ \begin{pmatrix}
        I_{N+1-(n+1)}&\\&w_{k,n+1-k}
        \end{pmatrix}$ implies
        \begin{equation}
        \label{eq:2}
            \fL\cdot \fC\cdot\fR=L^\prime \cdot C^\prime\cdot R^\prime\cdot \begin{pmatrix}
                I_{M} &\\
                & L
            \end{pmatrix}\cdot \begin{pmatrix}
                I_{M} &\\
                & C
            \end{pmatrix}\cdot \begin{pmatrix}
                I_{M} &\\
                & R
            \end{pmatrix}\ .
        \end{equation}
        By induction or Theorem \ref{thm:1} if $w^\prime$ has only two blocks,  $R^\prime$ has the shape
        \[
            R^\prime=\begin{pmatrix}
                B & V\\
                & I_{n+1}
            \end{pmatrix}.
        \]

        Therefore, the right side of (\ref{eq:2}) is equal to
        \begin{align*}
            &L^\prime \cdot C^\prime\cdot R^\prime\cdot\begin{pmatrix}
                I_{M} &\\
                & LCR
            \end{pmatrix}\\
            &=L^\prime \cdot C^\prime\cdot\begin{pmatrix}
                B &V\cdot(LCR)\\
                & LCR
            \end{pmatrix}\\
            &=L^\prime \cdot C^\prime\cdot\begin{pmatrix}
                I_{M} &\\
                & L
            \end{pmatrix}\cdot \begin{pmatrix}
                I_{M} &\\
                & C
            \end{pmatrix}\cdot\begin{pmatrix}
                B &V\cdot(LCR)\\
                & R
            \end{pmatrix}\\
            &=L^\prime\cdot\Big(C^\prime\begin{pmatrix}
                I_{M} &\\
                & L
            \end{pmatrix}{C^\prime}^{-1}\Big)\cdot C^\prime\cdot\begin{pmatrix}
                I_{M} &\\
                & C
            \end{pmatrix}\cdot\begin{pmatrix}
                B &V\cdot(LCR)\\
                & R
            \end{pmatrix}.
        \end{align*}
        
        Thus,
        \[
            \fR=\begin{pmatrix}
                B & V\cdot(LCR)\\
                & R
            \end{pmatrix},
        \]
        \[
            \fL=L\cdot\Big(C^\prime\begin{pmatrix}
                I_{M} &\\
                & L
            \end{pmatrix}{C^\prime}^{-1}\Big)
        \]
        and
        \[
            \fC=C^\prime\cdot \begin{pmatrix}
                I_{M} &\\
                & C
            \end{pmatrix}.
        \]
        This already proves the claim regarding the central factor $\fC$.
        
        For the left factor first compute that for $1\leq i\leq n+1$
        \[
            C^\prime_{i,M+i}=p^{-\sum_{j=n+1}^{N-1}m_{j,i}}
        \]
        by taking the product over the central factors of the Bruhat decompositions of the $b_{\alpha}(a)$.
        As \[\begin{psmallmatrix}
                I_{M} &\\
                & L
            \end{psmallmatrix}\]
        only has nontrivial entries in the bottom right $(n+1)\times(n+1)$ submatrix, only the above entries of $C^\prime$ are needed to compute that
        \[
            \Big(C^\prime\begin{pmatrix}
                I_{M} &\\
                & L
            \end{pmatrix}{C^\prime}^{-1}\Big)_{i,i+1}=L_{i,i+1}\cdot p^{\sum_{j=n+1}^{N-1}m_{i,j}-m_{i+1,j}}
        \]
        for $1\leq i< n+1$ and that
        \[
            \Big(C^\prime\begin{pmatrix}
                I_{M} &\\
                & L
            \end{pmatrix}{C^\prime}^{-1}\Big)_{i,i+1}=0
        \]
        for $n+1\leq i<N+1$.
        
        Using 
        $
            L^\prime,\ C^\prime\begin{psmallmatrix}
                I_{M} &\\
                & L
            \end{psmallmatrix}{C^\prime}^{-1}\in U(\IQ_p),
        $
        compute
        \[
            \Big(L^\prime+C^\prime\begin{pmatrix}
                I_{M} &\\
                & L
            \end{pmatrix}{C^\prime}^{-1}\Big)_{i,i+1}=L^\prime_{i,i+1}+\Big(C^\prime\begin{pmatrix}
                I_{M} &\\
                & L
            \end{pmatrix}{C^\prime}^{-1}\Big)_{i,i+1},
        \]
        which proves the claims regarding the left factor $\fL$.

        The shape of $\fR$ already proves most of the claims about $\fR$ except 
        \[
            \fR_{i,i+1}=\sum_{j=1}^{k+1}c_{j,n+f}d_{j,k}\cdot p^{\sum_{l=1}^{j-1}m_{l,k}-m_{l,n+f}-m_{j,n+f}}
        \]
        for $i=M$. This entry is given by multiplying the bottom row of $V$ with the first column of $LCR$. The entries of the first column of $LCR$ and by induction also the entries of the bottom row of $W$ are all given by Theorem \ref{thm:1}. Thus,
        \[
            (LCR)_{i,1}=\sum_{j=1}^{n+1}\sum_{l=1}^{n+1} L_{i,j}C_{j,l}R_{l,1}=\sum_{j=1}^{n+1}L_{i,j}C_{j,1}=L_{i,k+1}C_{k+1,1}=d_{i,k}p^{\sum_{l=1}^{i-1}m_{l,k}}
        \]
        and
        \[
            V_{M,i}=c_{i,n+f}p^{-\sum_{l=1}^{i}m_{l,n+f}}
        \]
        as claimed in the theorem.

        Thus, the only thing left is the system of representatives. 
        As in the proof of Theorem \ref{thm:2}, it is enough to show that two different representatives are not equivalent under the right action of $U_w(\IZ_p)$ because the total number of equivalence classes computed in Lemma \ref{lem:2} is the same as the number of representatives proposed in the theorem, and only the right factor of the Bruhat decomposition is relevant. Let 
        \[
            \fR=\begin{pmatrix}
                B & V\cdot LCR\\
                & R
            \end{pmatrix}\quad \text{resp.\ }\Tilde{\fR}=\begin{pmatrix}
                \Tilde{B} & \Tilde{V}\cdot\Tilde{L}\Tilde{C}\Tilde{R}\\
                & \Tilde{R}
            \end{pmatrix}
        \]
    be the right factor for the Bruhat decomposition corresponding to a representative $\underline c$ resp.\ $\Tilde{\underline c}$. First $\underline c_2=\underline c^\prime_2$ will be proven and then $\underline c_1=\underline c^\prime_1$.

    Assume there exists
    \[
        A=\begin{pmatrix}
            A_1 & A_2\\
            & A_3
        \end{pmatrix}\in U_w(\IZ_p),
    \]
    with $A_3\in U_{w_{k,n+1-k}}(\IZ_p)$, such that $\fR\cdot A=\Tilde{\fR}$.

    This implies $R\cdot A_3=\Tilde{R}$ and by Theorem \ref{thm:2} $R=\Tilde{R}$ and $A_3=I_{n+1}$
    and therefore $\underline c_2=\underline c^\prime_2$. 
   Therefore,
    \begin{align*}
        &&\begin{pmatrix}
                B & V\cdot LCR\\
                & R
            \end{pmatrix}\cdot \begin{pmatrix}
            A_1 & A_2\\
            & I_{n+1}
        \end{pmatrix}&=\begin{pmatrix}
                \Tilde{B} & \Tilde{V}\cdot\Tilde{L}\Tilde{C}\Tilde{R}\\
                & R
            \end{pmatrix}\\
        &\Rightarrow& \begin{pmatrix}
                B & V\cdot LCR\\
                & I_{n+1}
            \end{pmatrix}\cdot \begin{pmatrix}
            A_1 & A_2\\
            & I_{n+1}
        \end{pmatrix}&=\begin{pmatrix}
                \Tilde{B} & \Tilde{V}\cdot\Tilde{L}\Tilde{C}\Tilde{R}\\
                & I_{n+1}
            \end{pmatrix}\\
        &\Rightarrow& \begin{pmatrix}
                B & V\\
                & (LCR)^{-1}
            \end{pmatrix}\cdot\begin{pmatrix}
                I_{M}&\\& LCR
            \end{pmatrix}
            \cdot
            \begin{pmatrix}
            A_1 & A_2\\
            & I_{n+1}
        \end{pmatrix}&=\begin{pmatrix}
                \Tilde{B} & \Tilde{V}\\
                & (LCR)^{-1}
            \end{pmatrix}\cdot\begin{pmatrix}
                I_{M}&\\& LCR
            \end{pmatrix}\\  
        &\Rightarrow& \begin{pmatrix}
                B & V\\
                & (LCR)^{-1}
            \end{pmatrix}
            \cdot
            \begin{pmatrix}
            A_1 & A_2\\
            & LCR
        \end{pmatrix}&=\begin{pmatrix}
                \Tilde{B} & \Tilde{V}\\
                & (LCR)^{-1}
            \end{pmatrix}\cdot\begin{pmatrix}
                I_{M}&\\& LCR
            \end{pmatrix}.
    \end{align*}
    Multiplying both sides with
    \[
            \begin{pmatrix}
                I_{M} & \\
                & (LCR)^{-1}
            \end{pmatrix}
    \]
    results in
    \begin{align*}
        &\Rightarrow& \begin{pmatrix}
                B & V\\
                & (LCR)^{-1}
            \end{pmatrix}
            \cdot
            \begin{pmatrix}
            A_1 & A_2\\
            & LCR
        \end{pmatrix}\cdot\begin{pmatrix}
                I_{M} & \\
                & (LCR)^{-1}
            \end{pmatrix}
        &=\begin{pmatrix}
                \Tilde{B} & \Tilde{V}\\
                & (LCR)^{-1}
            \end{pmatrix}\\
        &\Rightarrow& \begin{pmatrix}
                B & V\\
                & (LCR)^{-1}
            \end{pmatrix}
            \cdot
            \begin{pmatrix}
            A_1 & A_2\cdot (LCR)^{-1}\\
            & I_{n+1}
        \end{pmatrix}
        &=\begin{pmatrix}
                \Tilde{B} & \Tilde{V}\\
                & (LCR)^{-1}
            \end{pmatrix}\\
        &\Rightarrow& \begin{pmatrix}
                B & V\\
                & I_{n+1}
            \end{pmatrix}
            \cdot
            \begin{pmatrix}
            A_1 & A_2\cdot (LCR)^{-1}\\
            & I_{n+1}
        \end{pmatrix}
        &=\begin{pmatrix}
                \Tilde{B} & \Tilde{V}\\
                & I_{n+1}
            \end{pmatrix}.
    \end{align*}
    \cite{DR98}*{Proposition 3.1} for $w_{k,n+1-k}$ implies that $LCR\in \GL_{n+1}(\IZ_p)$ and thus
    {$
        \begin{psmallmatrix}
            A_1 & A_2\cdot (LCR)^{-1}\\
            & I_{n+1}
        \end{psmallmatrix}\in U_{w^\prime}(\IZ_p)$}.
    As {$\begin{psmallmatrix}
                B & V\\
                & I_{n+1}
            \end{psmallmatrix}$ and $\begin{psmallmatrix}
                \Tilde{B} & \Tilde{V}\\
                & I_{n+1}
            \end{psmallmatrix}$}
    are the right factors for the Bruhat decomposition corresponding to $\underline c_1$ resp.\ $\Tilde{\underline c_1}$ they are only equivalent under the right action of $U_{w^\prime}(\IZ_p)$ if they and the representatives are equal. Thus, $\underline c_1=\Tilde{\underline c_1}$.

    Combining the two parts gives $\underline c=\Tilde{\underline c}$ and that the representatives are pairwise not equivalent under the right action of $U_w(\IZ_p)$, which concludes the proof.
    \end{proof}

    Knowing a system of representatives and the entries of $L$ and $R$ on the first off-diagonal enables the explicit statement of the Kloosterman sum for general Weyl elements and proves Theorem \ref{thm:4}. Using a modification of the diagrams this parametrized sum can be stated even more compactly.

   The modification will first be explained for Weyl elements with two blocks and then for general Weyl elements. Each edge is replaced by a directed edge pointing downwards or to the right. Two edges are added, one going into $k,k$ and one going out of $1,n$. For $w_0$ the resulting diagram looks as follows:

        \begin{center}
         \begin{tikzcd}
            \arrow[d]& & & \\
            2,2 \arrow[r] \arrow[d]& 2,3 \arrow[r] \arrow[d] & 2,4 \arrow[d] & \\
            1,2 \arrow[r] & 1,3 \arrow[r] & 1,4 \arrow[r, dotted]&\ .\\
            & & & 
        \end{tikzcd}
        \end{center}

        Here one edge is dotted because it belongs to the right factor while all others belong to the left factor. Further, modify the diagram by adding numbers to the edges. Edges going from $(i,j)$ to $(i,j+1)$ get the number $j+1$, edges going from $(i,j)$ to $(i-1,j)$ get the number $i-1$, the extra edge into $(k,k)$ gets the number $k$ and the dotted edge gets the number $n+1-k$. For $w_0$ this looks as follows:

        \begin{center}
        \begin{tikzcd}
            \arrow[d,"2"]& & & \\
            2,2 \arrow[r,"3"] \arrow[d,"1"]& 2,3 \arrow[r,"4"] \arrow[d,"1"] & 2,4 \arrow[d,"1"] & \\
            1,2 \arrow[r,"3"] & 1,3 \arrow[r,"4"] & 1,4 \arrow[r,"3", dotted]&\ .\\
            & & & 
        \end{tikzcd}
        \end{center}
   For a general Weyl element, the vertices of the diagram are again the $\gamma_i$ but the diagram consists of multiple blocks. Each block corresponds to a factor obtained by repeating the factorization used in the proof of Theorem \ref{thm:3} until all factors are two block Weyl elements, which might not be admissible for $\GL_{N+1}$ but for $\GL_k$ with $k<N+1$. For each such factor, the diagram is modified as before with the addition of $l$ dotted edges between two blocks, where the second block has height $l-1$. The dotted edges to the right of the block corresponding to the factor {$\begin{psmallmatrix}
            I_{N+1-(n+1)}&&\\
            &&I_{k}\\
            &I_{n+1-k}&
        \end{psmallmatrix}$}
    get the number $N+1-k$.
    
    For the Weyl element
    {$
        w_2:=\begin{psmallmatrix}
                &&I_2\\&I_2&\\I_2&&    
            \end{psmallmatrix}=\begin{psmallmatrix}
                &I_4\\I_2&   
            \end{psmallmatrix}\cdot \begin{psmallmatrix}
                I_2\\&&I_2\\&I_2&    
            \end{psmallmatrix}
    $}
    the modified diagram is:

    \begin{center}
    \begin{tikzcd}
            \arrow[d,"4"]& &  &&\\
            4,4 \arrow[r,"5"] \arrow[d,"3"]& 4,5  \arrow[d,"3"] & &&\\
            3,4 \arrow[r,"5"] \arrow[d,"2"]& 3,5  \arrow[d,"2"]\arrow[r,"2", dotted]&\ \arrow[d,"2"]&&\\
            2,4 \arrow[r,"5"] \arrow[d,"1"]& 2,5  \arrow[d,"1"]\arrow[r,"2", dotted]&2,2 \arrow[r,"3"] \arrow[d,"1"]& 2,3  \arrow[d,"1"] & \\
            1,4 \arrow[r,"5"] & 1,5  \arrow[r,"2", dotted]&1,2 \arrow[r,"3"] & 1,3  \arrow[r,"4", dotted]&\ .\\
            & &  &&
    \end{tikzcd}
    \end{center}
    \begin{rem}
        Again, each edge can be thought of as a simple reflection acting. The number is exactly the number of the reflection acting. For the dotted edges this needs to be conjugated by $w$, as in a dotted edge with an $i$ corresponds to $ws_iw^{-1}$ acting. The edges that only have one vertex would send the root to minus itself, for example $w_2s_2w_2^{-1}(\alpha_{3,5})=-\alpha_{3,5}$.
    \end{rem}

    The modified diagram can be used to state the Kloosterman sum in the following way. Let $E_i$ resp.\ $E_i^\prime$ be the set of edges resp.\ dotted edges with the number $i$. A set $E_i$ is ordered from top to bottom and left to right and a set $E_i^\prime$ is ordered from bottom to top. Theorem \ref{thm:4} shows that an edge with the number $i$ corresponds to one summand multiplied by $\psi_i$ resp.\ $\psi_i^\prime$ for dotted edges. The summand for an edge $f$ with number $i$ is $c_{f_1}d_{f_2}p^*$, where $f$ goes from $f_1$ to $f_2$ and $*$ can be computed by
    \[
        *=-m_{f_2}+\sum_{f>e\in E_i} m_{e_1}-m_{e_2}
    \]
    resp.\ 
    \[
        *=-m_{f_1}+\sum_{f>e\in E_i^\prime} m_{e_2}-m_{e_1}
    \]
    for dotted edges. If $f_i$ does not exists, $c_{f_i}$ and $d_{f_i}$ are treated as one.
    This leads to the following Corollary of Theorem \ref{thm:4}.
    \begin{cor}
        \label{cor:2}
        Assume the setting of Theorem \ref{thm:4} and let $E_i$ and $E_i^\prime$ be as defined above. Then
        \begin{align*}
            \Kl_p(\underline m, \psi,\psi^\prime,w)=\sum_{\underline c\in C_w(\underline m)}\Psi\Big(\sum_{i=1}^N &\psi_i(\sum_{e\in E_i}c_{e_1}d_{e_2}p^{-m_{e_2}+\sum_{f\in E_i, f<e}m_{f_1}-m_{f_2}})\\
            +\ &\psi^\prime_i(\sum_{e\in E^\prime_i}c_{e_1}d_{e_2}p^{-m_{e_1}+\sum_{f\in E^\prime_i, f<e}m_{f_2}-m_{f_1}})\Big).
        \end{align*}
    \end{cor}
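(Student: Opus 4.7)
The plan is to prove Corollary \ref{cor:2} as a direct translation of Theorem \ref{thm:4} into the edge-labeled diagram language, essentially a bookkeeping verification that each summand in Theorem \ref{thm:4} matches the contribution of a unique edge in the modified diagram. Since Theorem \ref{thm:4} already gives the parametrized Kloosterman sum explicitly, the only task is to check that the prescriptions ``sum over $E_i$'' and ``sum over $E_i^\prime$'', with the stated $p$-power exponents, reproduce exactly the inner sums over $j$ appearing in Theorem \ref{thm:4}.

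First I would handle the two-block case via Corollary \ref{cor:1}. For each $i$ with $1 \leq i \leq N$ I would identify the three possibilities:
\begin{itemize}
\item[(a)] $i < k$: the inner sum $\sum_{j=k}^n c_{i+1,j}d_{i,j}p^{-m_{i,j}+\sum_{l=k}^{j-1}m_{i+1,l}-m_{i,l}}$ corresponds precisely to the vertical edges $e=(i+1,j)\to(i,j)$ in $E_i$ ordered left to right, giving $c_{e_1}d_{e_2}=c_{i+1,j}d_{i,j}$ and $-m_{e_2}=-m_{i,j}$;
\item[(b)] $i = k$: the single term $c_{k,k}p^{-m_{k,k}}$ corresponds to the extra edge with number $k$, under the convention that an absent endpoint contributes a factor of $1$;
\item[(c)] $i > k$: the inner sum corresponds to the horizontal edges $e=(j,i-1)\to(j,i)$ in $E_i$ ordered top to bottom.
\end{itemize}
In each case the partial sum $\sum_{f<e}m_{f_1}-m_{f_2}$ unfolds to the explicit $\sum_{l=\ldots}^{\ldots}(m_{\cdot,l}-m_{\cdot,l})$ appearing in Corollary \ref{cor:1}, because the prescribed top-to-bottom then left-to-right order on $E_i$ matches the summation order in $j$. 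The dotted edge contributes the $\psi_{n+1-k}^\prime$ term analogously.

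For a general Weyl element I would argue by induction on the number of blocks using the factorization $w=w^\prime\circ w_k$ from Theorem \ref{thm:3}. The modified diagram decomposes into sub-diagrams, one per two-block factor, glued by dotted edges; correspondingly the sum in Theorem \ref{thm:4} decomposes into contributions indexed by $q$, each having the shape of a simple two-block sum but twisted by extra exponent terms $\sum_{l=K_{q+1}}^N m_{i+1,l}-m_{i,l}$ coming from conjugation by the central factor $C^\prime$ (see the computation of $\fL_{i,i+1}$ in Theorem \ref{thm:3}). In the diagram language, these twists are absorbed automatically: the edges with number $i$ belonging to later (more rightward) blocks appear earlier or later in the ordering of $E_i$, and the accumulated $\sum_{f<e}m_{f_1}-m_{f_2}$ produces exactly the twist.

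The main obstacle will be verifying this alignment of exponents when an $E_i$ straddles several blocks of the diagram. The key bookkeeping fact is that Theorem \ref{thm:4} packages the twist inside each block's inner sum (as the fixed addend $\sum_{l=K_{q+1}}^N m_{i+1,l}-m_{i,l}$ independent of $j$), while the edge formula produces it incrementally as a telescoping contribution across the dotted edges separating blocks. Matching these two presentations term by term, together with confirming that the ordering of $E_i$ and $E_i^\prime$ is consistent with the block order inherited from the inductive factorization, is the step that requires care; once this is done, each remaining equality is immediate from Theorem \ref{thm:4}, and the corollary follows.
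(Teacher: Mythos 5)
Your proposal is correct and takes essentially the same route as the paper, which presents Corollary \ref{cor:2} as a direct reinterpretation of Theorem \ref{thm:4} via the modified diagram and gives no separate formal proof; your plan of checking the two-block case against Corollary \ref{cor:1} and then matching the cross-block twists $\sum_{l=K_{q+1}}^{N}m_{i+1,l}-m_{i,l}$ to the accumulation $\sum_{f<e}m_{f_1}-m_{f_2}$ is exactly the bookkeeping the paper implicitly asserts. One small wording point: that accumulation runs over the \emph{solid} edges of $E_i$ lying in the more leftward blocks, not over the dotted edges (which only mark block boundaries and carry the $\psi^\prime$ contributions), so ``telescoping contribution across the dotted edges'' is slightly loose, though the underlying idea is right.
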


    \begin{rem}
        In Theorem \ref{thm:4} the sum is ordered by the different blocks and not by the different entries of the left factor because this makes the formula more compact. 
    \end{rem}

\section{Bounds}
\label{sec:4}
\subsection{Preparations}
The main strategy for bounding the now parametrized Kloosterman sum consists of two steps expanding on the ideas used in \cite{BM24}*{Section 5}. In the first step, one realizes that $\Kl_p(\underline m,\psi,\psi^\prime,w)$ resembles nested or interlinked Kloosterman sums for $\GL_2$, to which the Weil bound can be applied. In the seconds step these bounds are combined to achieve a power saving for the whole sum.

To make this concrete look at the sum over a single variable $c_{i,j}$ with $m_{i,j}>0$. This is a $\GL_2$ Kloosterman sum of the form 

\begin{equation}
    \label{eq:8}
    \sum_{c_{i,j}\in(\IZ/C_{i,j}\IZ)^*} \Psi((d_3p^{b_3}+d_4p^{b_4})c_{i,j}+(c_1p^{b_1}+c_2p^{b_2})d_{i,j})
\end{equation}
with $b_1,b_2,b_3$ and $b_4\in\IZ$. Here $c_1,c_2,d_3$ and $d_4$ are the possibly four variables (or their inverses) adjacent to $c_{i,j}$ in the diagram.
Define $A_{i,j}:=v_p((d_3p^{b_3}+d_4p^{b_4}))$ and $B_{i,j}:=v_p((c_1p^{b_1}+c_2p^{b_2}))$.
\begin{rem}
    For ease of notation, the additional factors coming from the characters $\psi$ and $\psi^\prime$ are suppressed by absorbing $|\psi_i|_p^{-1}$ resp.\ $|\psi_i^\prime|_p^{-1}$ into the $b_i$ and ignoring the factor coprime to $p$ as it only rearranges the summands and does not change the size of a single sum.
\end{rem}

Using the Weil bound (\ref{eq:8}) can be bounded by
\[
    |\IZ/(C_{i,j}\IZ)^*|p^{\min (A_{i,j},B_{i,j},0)/2}.
\]
For the following arguments, it is easier to only use $B_{i,j}$ because $d_3$ and $d_4$ in the above sum could simply be zero constantly if the corresponding $m$ is zero. Furthermore, the dependency on $c_1$ and $c_2$ is a problem and working with $b_{i,j}:=\min (b_1,b_2,0)$ instead of $B_{i,j}$ would be easier. It turns out that on average $B_{i,j}$ can be replaced by $b_{i,j}$, which will be shown below. Define $b_{i,j}^{(1)}:=b_1$ (resp.\ $b_{ij}^{(2)}:=b_2$), where $c_1$ is the variable above $c_{i,j}$ in the diagram (resp.\ $c_2$ is the variable to the left in the diagram). 

If $m_{i,j}=0$, the sum over $c_{i,j}$ is no longer a Kloosterman sum and is estimated trivially by $|\IZ/C_{i,j}\IZ|$.

To later combine both estimates define
\[
    b_{i,j}^*:=\begin{cases}
        0 & \text{ if } m_{ij}=0\ ,\\
        b_{i,j} & \text{ else}\ .
    \end{cases}
\]

One could hope for a bound of the form 
\[
    \Kl_p(\underline m,\psi,\psi^\prime,w)\ll |C_w(\underline m)|p^{\sum_{i,j}\frac{1}{2}b_{i,j}^*}.
\]
But as the sums are interlinked, their respective bounds cannot be applied at the same time. To overcome this issue the variables are grouped into two sets such that two variables in the same set are not adjacent in the diagram. For a two-block Weyl element, this simply means grouping all variables $c_{i,j}$ with $i+j$ even (resp.\ odd) together. For more blocks, this stays true within a block but depending on the sizes of the block a different one of the two options might be picked for different blocks.
Define $C_{w}(\underline m)^1$ and $C_{w}(\underline m)^2$ as the sets of values for the $c_{i,j}$ in the respective groups and rearrange the parametrized Kloosterman sum such that the sum over $\underline c_1$ is executed first. Then, the above bound can be applied to each of the $\underline c_1$ sums because they are not interlinked.

To estimate the remaining sum over $\underline c_2$ a generalization of \cite{BM24}*{Lemma 7} is needed ensuring that the previously mentioned dependency of $B_{i,j}$ on the adjacent variables vanishes on average. \cite{BM24}*{Lemma 7} itself is not enough because the variables are grouped into two groups in this paper and not four groups as in \cite{BM24}, which improves the saving by a factor of $2$.

\begin{lemma}
    \label{lem:3}
    Let $k>1$, $e_1,...,e_k\geq0$ and $b_{1},b^\prime_{1},...,b_{k},b^\prime_{k}\in\IZ$. Then 
    \[
        \sum_{c_1=1}^{p^{e_1}}...\sum_{c_k=1}^{p^{e_k}}\prod_{i=1}^{k-1}|c_ip^{b_{i}}+c_{i+1}p^{b^\prime_{i+1}}|_p^{-1/2}\ll \big(\prod_{i=1}^ke_i\big)\cdot p^{\sum_{i=1}^ke_i + \frac{1}{2}\sum_{i=1}^{k-1}\min(b_{i}, b^\prime_{i+1})}.
    \]
\end{lemma}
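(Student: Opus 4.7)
The plan is to prove the estimate by induction on $k$, decomposing the outer sums according to the $p$-adic valuations $f_i := v_p(c_i)$ of the summation variables. Writing $|x|_p^{-1/2} = p^{v_p(x)/2}$, each factor becomes $p^{v_p(c_i p^{b_i} + c_{i+1} p^{b'_{i+1}})/2}$, and the number of $c_i \in [1, p^{e_i}]$ with $v_p(c_i) = f_i$ is $O(p^{e_i - f_i})$. The key observation is the ultrametric dichotomy: the linking valuation equals $\min(f_i + b_i, f_{i+1} + b'_{i+1})$ whenever these two quantities differ, and can exceed this minimum only in the balanced case $f_i + b_i = f_{i+1} + b'_{i+1}$, where the surplus comes from additional cancellation in $c_i + c_{i+1} p^{b'_{i+1} - b_i}$ (after factoring out the smaller power of $p$).

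For the base case $k = 2$, after assuming $b_1 \leq b'_2$ without loss of generality, I would split the $(f_1, f_2)$-summation into the three subcases determined by the sign of $f_1 + b_1 - f_2 - b'_2$. In the strict-inequality subcases the linking valuation collapses to $\min$, and the weighted counts $p^{e_1 + e_2 - f_1 - f_2 + \min/2}$ produce convergent geometric series in $f_1, f_2$ that sum to $O(p^{e_1 + e_2 + \min(b_1, b'_2)/2})$. In the balanced subcase I would reparametrize $c_i = p^{f_i} c_i'$ with $c_i'$ coprime to $p$ and estimate the resulting inner double sum $\sum_{c_1', c_2'} p^{v_p(c_1' + c_2' p^s)/2}$ by stratifying pairs by a prescribed additional cancellation level $t$; each further level costs roughly a factor $p^{-1}$ in count while contributing $p^{1/2}$ to the summand, so the series still converges within the target.

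The induction step would integrate $c_k$ out first, applying the base-case estimate to $\sum_{c_k} |c_{k-1} p^{b_{k-1}} + c_k p^{b'_k}|_p^{-1/2}$ for each fixed $c_{k-1}$; this yields an upper bound of the form $e_k \cdot p^{e_k + \min(b_{k-1} + v_p(c_{k-1}), b'_k)/2}$. The residual $v_p(c_{k-1})$-dependence would then be absorbed into the adjacent factor $|c_{k-2} p^{b_{k-2}} + c_{k-1} p^{b'_{k-1}}|_p^{-1/2}$ by adjusting $b'_{k-1}$, so that the resulting $(k-1)$-fold sum is again of the shape covered by the induction hypothesis; the new $e_k$ factor supplies the missing term in the product $\prod e_i$.

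The main obstacle I anticipate is controlling the balanced/cancellation case in the base step, since this is precisely the regime in which pointwise minimum-valuation estimates break down and one has to use that each further level of cancellation occurs with $p$-adic density $p^{-1}$. A secondary difficulty is correctly propagating the $v_p(c_{k-1})$-dependent gain through the induction and verifying that the absorption into $b'_{k-1}$ really preserves the $\min$ structure rather than creating uncontrolled error terms; the polynomial-in-$e_i$ slack in the target bound is what provides the necessary room for this bookkeeping.
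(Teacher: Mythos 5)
Your overall structure mirrors the paper's: induction on $k$, stratification of the $c_k$-sum by $p$-adic valuation, and treatment of the ``balanced'' valuation case via further cancellation-level stratification. The base case ($k=2$) is essentially the argument of \cite{BM24}*{Lemma 7}, which the paper also invokes.

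The genuine gap is in your induction step, in the claim that the residual $v_p(c_{k-1})$-dependence can be ``absorbed into the adjacent factor $|c_{k-2}p^{b_{k-2}}+c_{k-1}p^{b'_{k-1}}|_p^{-1/2}$ by adjusting $b'_{k-1}$.'' After summing out $c_k$ you obtain (as you say) a bound $\ll e_k\, p^{e_k+\frac12\min(b_{k-1},b'_k)}\,|c_{k-1}|_p^{-1/2}$, and the remaining $(k-1)$-fold sum now carries an extra multiplicative factor $|c_{k-1}|_p^{-1/2}$. This factor is \emph{not} of the form $|c_{k-2}p^{\,\cdot}+c_{k-1}p^{\,\cdot}|_p^{-1/2}$ for any choice of exponents (it does not depend on $c_{k-2}$ at all), and $v_p(c_{k-1})$ is a varying quantity, so no fixed shift of $b'_{k-1}$ can encode it. Consequently the sum you are left with is not of the shape covered by your stated induction hypothesis, and the induction does not close as written. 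You flag this as a ``secondary difficulty,'' but it is in fact the decisive point, and the ``polynomial-in-$e_i$ slack'' does not help: the mismatch is structural, not a question of a logarithmic loss.

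The paper's resolution is to strengthen the induction hypothesis: one proves instead
\[
\sum_{c_1=1}^{p^{e_1}}\!\cdots\!\sum_{c_k=1}^{p^{e_k}}|c_k|_p^{-1/2}\prod_{i=1}^{k-1}|c_ip^{b_i}+c_{i+1}p^{b'_{i+1}}|_p^{-1/2}\ll\Big(\prod_{i=1}^k e_i\Big)p^{\sum_i e_i+\frac12\sum_{i=1}^{k-1}\min(b_i,b'_{i+1})},
\]
with the extra factor $|c_k|_p^{-1/2}$ built in. After integrating out $c_k$ one is left with exactly the factor $|c_{k-1}|_p^{-1/2}$, so the remaining $(k-1)$-fold sum is precisely of the strengthened form for $k-1$ and the induction closes cleanly (the $\delta_k$-sum contributes the new $e_k$ factor). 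Your argument would be correct if you replaced the statement you are inducting on by this stronger one; everything else you describe, including the base case and the valuation/cancellation-level stratification, then goes through.
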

\begin{rem}
    The factor $\prod_{i=1}^ke_i$ does not appear in \cite{BM24}*{Lemma 7} and is a result of the $c_i$ appearing in two factors on the left side. It can maybe be removed but it is later absorbed into a constant and an $\varepsilon $ in the exponent.
\end{rem}
\begin{proof}
    The lemma follows from the stronger statement
    \[
        \sum_{c_1=1}^{p^{e_1}}...\sum_{c_k=1}^{p^{e_k}}|c_k|_p^{-1/2}\prod_{i=1}^{k-1}|c_ip^{b_{i}}+c_{i+1}p^{b^\prime_{i+1}}|_p^{-1/2}\ll \big(\prod_{i=1}^ke_i\big)\cdot p^{\sum_{i=1}^ke_i + \frac{1}{2}\sum_{i=1}^{k-1}\min(b_{i}, b^\prime_{i+1})},
    \]
    which is proven by induction on $k$.
    The \cite{BM24}*{Proof of Lemma 7} can be seen as a proof for $k=2$ because compared to the sum in \cite{BM24}*{Lemma 7} only a factor $p^{\delta_2/2}$ is added, which makes the $\delta_2$ sum contribute the additional factor $e_2$.

    Assume the statement is true for $k\geq 2$. Separate the $k+1$-th sum from the rest:
    \begin{align*}
        &\sum_{c_1=1}^{p^{e_1}}...\sum_{c_{k+1}=1}^{p^{e_{k+1}}}|c_{k+1}|_p^{-1/2}\prod_{i=1}^{k}|c_ip^{b_{i}}+c_{i+1}p^{b^\prime_{i+1}}|_p^{-1/2}\\
        =& \sum_{c_1=1}^{p^{e_1}}...\sum_{c_{k}=1}^{p^{e_{k}}}\prod_{i=1}^{k-1}|c_ip^{b_{i}}+c_{i+1}^{b^\prime_{i+1}}|_p^{-1/2}\sum_{c_{k+1}=1}^{p^{e_{k+1}}}|c_{k+1}|_p^{-1/2}|c_{k}p^{b_{k}}+c_{{k+1}}p^{b^\prime_{{k+1}}}|_p^{-1/2}.
    \end{align*}
    Splitting the sum over $c_{k+1}$ grouped by $v_p(c_{k+1})$ similar to \cite{BM24}*{Proof of Lemma 7} results in:
    \[
        \sum_{c_{k+1}=1}^{p^{e_{k+1}}}|c_{k+1}|_p^{-1/2}|c_{k}p^{b_{k}}+c_{{k+1}}^{b^\prime_{{k+1}}}|_p^{-1/2}=\sum_{\delta_{k+1}=0}^{e_{k+1}}p^{\delta_{k+1}/2}\sum_{c_{k+1}=1, (p,c_{k+1})=1}^{p^{e_{k+1}-\delta_{k+1}}} |c_{k}p^{b_{k}}+c_{{k+1}}p^{b^\prime_{{k+1}}+\delta_{k+1}}|_p^{-1/2}.
    \]
    If $v_p(c_{k})+b_{k}\neq b^\prime_{{k+1}}+\delta_{k+1}$, the inner sum can be bounded by 
    \begin{align*}
        &\ p^{\delta_{k+1}/2}\sum_{c_{k+1}=1, (p,c_{k+1})=1}^{p^{e_{k+1}-\delta_{k+1}}} |c_{k}p^{b_{k}}+c_{{k+1}}p^{b^\prime_{{k+1}}+\delta_{k+1}}|_p^{-1/2}\\
        \ll& \ p^{e_{k+1}-\delta_{k+1}/2+\frac{1}{2}\min (b^\prime_{{k+1}}+\delta_{k+1}, v_p(c_{k})+b_{k})}\\
        \ll& \ p^{e_{k+1} +\frac{1}{2}\min (b^\prime_{{k+1}},b_{k})}\cdot |c_{k}|_p^{-1/2}.
    \end{align*}
    If $v_p(c_{k})+b_{k}= b^\prime_{{k+1}}+\delta_{k+1}$, the inner sum can be bounded by 
    \begin{align*}
        &\ p^{\delta_{k+1}/2}\sum_{c_{k+1}=1, (p,c_{k+1})=1}^{p^{e_{k+1}-\delta_{k+1}}} |c_{k}p^{b_{k}}+c_{{k+1}}p^{b^\prime_{{k+1}}+\delta_{k+1}}|_p^{-1/2}\\
        =& \ p^{\delta_{k+1}/2+(b^\prime_{{k+1}}+\delta_{k+1})/2}\sum_{\delta=0}^{e_{k+1}-\delta_{k+1}}p^{\delta/2}\sum_{c_{k+1}=1, (p,c_{k+1})=1, p^\delta\mid c_{k+1}+c_{k}}^{p^{e_{k+1}-\delta_{k+1}}} 1\\
        \ll& \ p^{\delta_{k+1}/2+(b^\prime_{{k+1}}+\delta_{k+1})/2}\sum_{\delta=0}^{e_{k+1}-\delta_{k+1}}p^{\delta/2+e_{k+1}-\delta_{k+1}-\delta}\\
        \ll& \ p^{e_{k+1} +\frac{1}{2}\min (b^\prime_{{k+1}},b_{k})}\cdot |c_{k}|_p^{-1/2}.
    \end{align*}
    In both cases, there is an additional factor $e_{k+1}$ from the $\delta_{k+1}$ sum and the rest of the sum resembles the lemma for $k$, which was assumed to be true. 
\end{proof}

To illustrate how to apply Lemma \ref{lem:3} to the sum over $\underline c_2$ look at the previous example:

\begin{center}
\begin{tikzcd}
    \arrow[d]& & & \\
    \textbf{2,2} \arrow[r] \arrow[d]& 2,3 \arrow[r] \arrow[d] & \textbf{2,4} \arrow[d] & \\
    1,2 \arrow[r] & \textbf{1,3} \arrow[r] & 1,4 \arrow[r, dotted]&\ .\\
    & & & 
\end{tikzcd}
\end{center}
        
Here the variables belonging to $\underline c_1$ are bold. After estimating their sums as above, the following dependencies are left:

\begin{center}
\begin{tikzcd}
    \arrow[d]& & & \\
    \textbf{2,2} & 2,3 \arrow[r] \arrow[d] & \textbf{2,4} & \\
    1,2 \arrow[r] & \textbf{1,3} & 1,4 \arrow[r, dotted]&\ .\\
    & & & 
\end{tikzcd}
\end{center}

The sum over $c_{1,2}$ and $c_{2,3}$ fits the shape of Lemma \ref{lem:3} and the sum over $c_{1,4}$ does as well. In general, the sums over variables being in the same diagonal from bottom left to top right will be in the shape of Lemma \ref{lem:3} and independent of the other sums. If one of the sums over a bold variable was estimated trivially because the corresponding $m$ is zero, the diagonal can be split into two parts and Lemma \ref{lem:3} can be applied to both parts separately.

The total bound is the product of the bounds for the sums over $\underline c_1$ and the trivial bounds for the sums over $\underline c_2$. The same process can be repeated with $\underline c_1$ and $\underline c_2$ swapped. Combining both bounds using the geometric mean results in the following lemma. 
\begin{lemma}
    For all $\varepsilon>0$ the bound
    \[
        \Kl_p(\underline m,\psi,\psi^\prime,w)\ll_{\varepsilon} |C_w(\underline m)|^{1+\varepsilon}p^{\frac{1}{4}\sum_{i,j}b_{i,j}^*}
    \]
    holds.
\end{lemma}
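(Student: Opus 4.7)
The plan is to execute the two-step argument outlined just before the statement. First, I would split the summation variables $(c_{i,j})_{(i,j)\in I_w}$ into two groups $\underline c_1$ and $\underline c_2$ via a proper $2$-coloring of the modified diagram of Corollary \ref{cor:2}: within each block I alternate according to the parity of $i+j$, and between consecutive blocks I match the parities so that the dotted edge linking them also connects variables of different colors. With this partition, no two variables in the same group appear together in any inner sum of the shape (\ref{eq:8}). I then fix $\underline c_2$ and perform the sum over $\underline c_1$ one variable at a time: if $m_{i,j}>0$ the inner sum is the $\GL_2$ Kloosterman sum (\ref{eq:8}), to which the Weil bound assigns a saving of $p^{B_{i,j}/2}$ over the trivial estimate $|(\IZ/C_{i,j}\IZ)^*|$, while for $m_{i,j}=0$ the sum is estimated trivially by $|\IZ/C_{i,j}\IZ|$. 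Because elements of $\underline c_1$ are pairwise non-adjacent, these estimates can be applied simultaneously.

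The output still depends on $\underline c_2$ through the exponents $B_{i,j}$. To remove this dependence I would carry out the remaining sum over $\underline c_2$ and invoke Lemma \ref{lem:3}. Once the $\underline c_1$-variables have been integrated out, the $\underline c_2$-variables decouple into chains running along the anti-diagonals of the diagram; any $c_{i,j}\in\underline c_1$ with $m_{i,j}=0$ merely breaks such a chain into two shorter independent chains. Along each chain, consecutive variables enter the product of Weil factors in precisely the form $|c_kp^{b_k}+c_{k+1}p^{b_{k+1}'}|_p^{-1/2}$ required by Lemma \ref{lem:3}, so the lemma replaces each $B_{i,j}$ by the cleaner quantity $b_{i,j}$ at the cost of a polynomial loss that is absorbed into $|C_w(\underline m)|^\varepsilon$. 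The outcome is
\[
    \Kl_p(\underline m,\psi,\psi^\prime,w)\ll_\varepsilon |C_w(\underline m)|^{1+\varepsilon}\,p^{\frac{1}{2}\sum_{(i,j)\in\underline c_1}b_{i,j}^*}.
\]
Swapping the roles of $\underline c_1$ and $\underline c_2$ and repeating the argument gives the same bound with the exponent summed over $\underline c_2$, and the geometric mean of the two estimates produces the symmetric factor $p^{\frac{1}{4}\sum_{i,j}b_{i,j}^*}$ appearing in the statement.

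The main obstacle will be the combinatorial bookkeeping at the interfaces between blocks: choosing the $2$-coloring coherently so that the dotted edges of the modified diagram still separate the two classes, and verifying that after integrating out $\underline c_1$ the residual dependency graph on $\underline c_2$ really decomposes into linear anti-diagonal chains of the form required by Lemma \ref{lem:3}, with no unexpected higher-order interactions across block boundaries. Once these combinatorial facts are pinned down, the proof is completed by careful accounting of $p$-powers and the geometric-mean step.
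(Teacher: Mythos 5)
Your proposal reproduces the paper's argument essentially verbatim: the same proper $2$-coloring of the diagram (parity of $i+j$ within blocks, with a possible parity flip between blocks so that the dotted edges also separate the classes), the same first pass applying the Weil bound to the mutually non-interlinked $\underline c_1$ sums while bounding $m_{i,j}=0$ variables trivially, the same use of Lemma~\ref{lem:3} along anti-diagonal chains (with broken chains at trivially-bounded vertices) to trade $B_{i,j}$ for $b_{i,j}$, and the same swap-and-geometric-mean step yielding the exponent $\tfrac14\sum b_{i,j}^*$, with the polynomial loss from the $e_i$'s absorbed into $|C_w(\underline m)|^\varepsilon$. This is exactly how the paper proceeds, so your plan is correct and aligned with the source.
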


\begin{proof}
    The only thing left to prove is that the product over the $e_i$ can be absorbed into the $\varepsilon$ and the implicit constant. Corollary \ref{cor:1} implies
    that each $e_i$ is bounded by $\sum_{i,j} m_{i,j}$. Thus, the product of the $e_i$'s is bounded by
    \[
        \big(\sum_{i,j} m_{i,j}\big)^{n^2}\ll _\varepsilon\big(p^{\sum_{i,j} m_{i,j}}\big)^{\varepsilon}\ll |C_w(\underline m)|^{\varepsilon}.
    \]
\end{proof}

\subsection{Proof of Theorem \ref{thm:5}}
\label{sec:4.2}

Relating $\sum_{i,j}b_{i,j}^*$ and $\sum_{i,j}|j-i+1|m_{i,j}$ by a factor proves the desired bound because a statement like
\begin{equation}
\label{eq:3}
    -\sum_{i,j}b_{i,j}^*\geq C\sum_{i,j}|j-i+1|m_{i,j}
\end{equation}
implies 
\[
    p^{\frac{1}{4}\sum_{i,j}b_{i,j}^*}\leq |C_w(\underline m)|^{4C}
\]
and thus
\[
    \Kl_p(\underline m,\psi,\psi^\prime,w)\ll_{\varepsilon} |C_w(\underline m)|^{1+\varepsilon-4C}.
\]

Theorem \ref{thm:4} claims $C=\frac{1}{l(w)}$, where one additional factor $2$ comes from having two sets of variables and the other from the Weil bound.

For now assume $m_{i,j}>0$ for all $i,j$. The case $m_{i,j}=0$ will be discussed later. The main idea to prove a statement like (\ref{eq:3}) is to estimate $b_{i,j}$ by $b_{i,j}^{(1)}$ or $b_{i,j}^{(2)}$ and group the $b_{i,j}$'s together such that their sum can be bounded by a sum of $m_{i,j}$'s. These groups are roughly going to be the downward diagonals in the diagram. They are motivated by Corollary \ref{cor:2} as the $p$-power of edges in a downward diagonal sum up the desired sums of $m_{i,j}$'s.

For a Weyl element for $\GL_{N+1}$ define $N$ sets of $b_{i,j}$'s, where the same $b_{i,j}$ can be contained in multiple sets. Let $B_k$ denote the $k$-th set. To illustrate how these sets are defined the previous example

\begin{center}
\begin{tikzcd}
        \arrow[d,"4"]& &  &&\\
        4,4 \arrow[r,"5"] \arrow[d,"3"]& 4,5  \arrow[d,"3"] & &&\\
        3,4 \arrow[r,"5"] \arrow[d,"2"]& 3,5  \arrow[d,"2"]\arrow[r,"2", dotted]&\ \arrow[d,"2"]&&\\
        2,4 \arrow[r,"5"] \arrow[d,"1"]& 2,5  \arrow[d,"1"]\arrow[r,"2", dotted]&2,2 \arrow[r,"3"] \arrow[d,"1"]& 2,3  \arrow[d,"1"] & \\
        1,4 \arrow[r,"5"] & 1,5  \arrow[r,"2", dotted]&1,2 \arrow[r,"3"] & 1,3  \arrow[r,"4", dotted]&\ \\
        & &  &&
\end{tikzcd}
\end{center}

corresponding to $w_2$ with $N=5$ is used.

The dotted arrows are ignored in the following definition. There are two kinds of sets. The first one corresponds to the numbers $k$ such that there is no horizontal edge with that number.

For such a $k$ start with selecting $b_{k,j}$ such that $k,j$ is the furthest to the right in the $k,*$-row in the diagram. If $b_{i,j}$ was just selected, select the next element in the $k$-th set by the following rules. If the diagram has an entry $a,b$ to the top left of $i,j$, add $b_{a,b}$. If not, add $b_{i,i},b_{i,i+1},..., b_{i,j-1}$ and $b_{j+1,l}$ such that $j+1,l$ is the one furthest to the right in the $j+1,*$ row, then repeat until the left column of the diagram is reached. 

The sets of the second kind correspond to $k$ such that there is a horizontal edge with the number $k$. Start by adding $b_{1,k}$ to the set. After adding $b_{i,j}$ add the $b_{a,b}$, where $a,b$ is to the top left of $i,j$ in the diagram, if $a,b=i+1,j-1$ or in other words if they belong to the same block. If not, $b_{i,j}$ is at an edge of a block. If it is the top edge, add $b_{j+1,l}$ such that $j+1,l$ is the one furthest to the right in the $j+1,*$ row and continue as in the first kind of sets. If it is the left edge, add $b_{i,j},..., b_{j,j}$ and then continue as in the first kind of sets. 

In the example, the five sets in order would look like this
\[
    \{b_{1,3},b_{2,2},b_{3,5},b_{4,4}\}, \{b_{2,3},b_{2,2},b_{4,5},b_{4,4}\}, \{b_{1,3},b_{2,2},b_{3,5},b_{4,4}\}, \{b_{4,5},b_{4,4}\}, \{b_{1,5},b_{1,4},b_{2,4},b_{3,4},b_{4,4}\}.
\]

A sum over each group is estimated by estimating the variables in the sets of the first kind by $b_{i,j}^{(1)}$ except for the ones on the top edge of a block that do not have an incoming edge from above, which are estimated by $b_{i,j}^{(2)}$, and the variables of the second set which are added until we hit an edge by $b_{i,j}^{(2)}$ and the ones after as in the first set. 

The sets are defined this way to achieve the following bounds. For a set of the first kind
\begin{equation}
\label{eq:5}
    \sum_{b_{i,j}\in B_k} b_{i,j}\leq -\sum_{j} m_{k,j}
\end{equation}
holds and for a set of the second kind
\begin{equation}
    \label{eq:6}
    \sum_{b_{i,j}\in B_k} b_{i,j}\leq -\sum_{j} m_{k,j}-\sum_{i} m_{i,k}
\end{equation}
holds.

These estimates can be combined to achieve a bound like (\ref{eq:3}) starting with a block with $a$ rows and $b$ columns:
\[
    \sum_{k=1}^a (a+1-k)\sum_{b_{i,j}\in B_k} b_{i,j} + \sum_{k=a+1}^{a+b-1} (k-a)\sum_{b_{i,j}\in B_k} b_{i,j}\leq -\sum_{1\leq i\leq a\leq j\leq a+b-1} (j-i+1)m_{i,j},
\]
where $m_k:=(a+1-k)$ resp.\ $m_k:=(k-a)$ is the multiplicity of $B_k$. For a Weyl element with more than two blocks, the multiplicity of $B_k$ is the maximum of the multiplicities of $B_k$ for the blocks. Each $b_{i,j}$ appears at most $l(w)$ many times in the sum over the $B_k$ with multiplicities. For one block, this is a direct computation, and for multiple blocks, it can be bounded by the sum of the sizes of the blocks because of the way the multiplicities of the $B_k$ were defined. This argument will be made more precise below. This results in
\begin{equation}
\label{eq:4}
    l(w)\cdot \sum_{i,j}b_{i,j}\leq \sum_{k=1}^N m_k\sum_{b_{i,j}\in B_k} b_{i,j}\leq -\sum_{i,j}(j-i+1)m_{i,j}.
\end{equation}

To deal with $m_{i,j}=0$ the construction of the $B_k$ is modified according to the following three cases

If in the construction of a set of the second kind before crossing the first block boundary a $b_{i,j}$ with $m_{i,j}=0$ which is estimated by $b_{i,j}^{(2)}$ would be added, instead add $b_{i+1,j}$, estimate it by $b_{i+1,j}^{(2)}$ and continue the construction from there, or if $b_{i+1,j}$ does not exist, add $b_{j,l}$ such that $j,l$ is the furthest to the right in the $j,*$-row, estimate it by $b_{j,l}^{(1)}$ and continue as after the top edge case for sets of the second kind.

If in the construction of a set a $b_{i,j}$ with $m_{i,j}=0$ which is estimated by $b_{i,j}^{(2)}$ and which is not covered by the previous case would be added, instead add $b_{j,l}$ such that $j,l$ is the furthest to the right in the $j,*$-row, estimate it by $b_{j,l}^{(1)}$, but continue as if $b_{i,j}$ was added. 

If in the construction of a set a $b_{i,j}$ with $m_{i,j}=0$ which is estimated by $b_{i,j}^{(1)}$ would be added, instead add the variable $b_{i,l}$ to the left of it ($b_{i,j-1}$ in most cases), estimate it by $b_{i,l}^{(1)}$, but continue as if $b_{i,j}$ was added. 

In all three cases, if the new candidate also has $m_{i,j}=0$, it is replaced following the same rules.

Claim: The new sets $B_k$ still satisfy both inequalities in (\ref{eq:4}) even with $b_{i,j}^*$ instead of $b_{i,j}$.

\textit{Proof of claim:} The second claim is clear once the first is proven because only $b_{i,j}$ with $m_{i,j}>0$ are added to the $B_k$.

The first inequality holds because $b_{i,j}$ can only appear at most once in $B_k$ if there is a variable $k,b$ on or to the right of the downwards diagonal starting at $i,j$ in the diagram, or if $B_k$ is of the second kind and there is a variable $a,k$ on or to the left of the downwards diagonal starting at $i,j$ in the diagram. The maximal sum over these multiplicities can be computed inductively over the number of blocks and is exactly $l(w)$.

The seconds inequality holds because if $b_{i,j}$ is replaced by $b_{a,b}$ the estimate for $b_{a,b}$ is smaller than the estimate for $b_{i,j}$ under the assumption that $m_{i,j}=0$ and thus (\ref{eq:5}) and (\ref{eq:6}) still hold, which proves the claim.

Thus, (\ref{eq:3}) holds with $C=\frac{1}{l(w)}$ and there are only two more things left to prove Theorem \ref{thm:5}.
The first one is the dependency on the characters. Each estimate of a single Kloosterman sum in the discussion at the start of this section gets worse by at most a factor of $\max_{1\leq j\leq n}\max(|\psi_j|_p^{-1/2},|\psi^\prime_j|_p^{-1/2})$ and $l(w)/2$ of them are used at the same time. Therefore, the total error is at most a factor $\max_{1\leq j\leq n}\max(|\psi_j|_p^{-1/2},|\psi^\prime_j|_p^{-1/2})^{l(w)/2}$. More on this in the next chapter.

The second is that $\Kl_p(\underline m,\psi,\psi^\prime,w)$ was bounded and not $\Kl_p(\psi,\psi^\prime,\fn)$ but the number of possible $\underline m$ for a fixed $\fn$ and thus a fixed $\lambda$ can be absorbed into the $\varepsilon$ in the exponent. This concludes the proof of Theorem \ref{thm:5} for $\GL_{N+1}(\IZ_p)$.\qed

\begin{rem}
    Heuristically one would expect the case with all $m_{ij}$ of the same or at least similar size to be the worst case. Surprisingly the same method as above can prove a saving of $\frac{1}{2(N-1)}$ for the long Weyl element with all $m_{ij}$ being equal. This could be seen as in indication that the optimal saving should be $\frac{1}{O(N)}$ but it could also be that there are unforeseen edge cases for $\GL_{N+1}$ that produce bigger sums.
\end{rem}

\subsection{Character dependency}
\label{subsec:43}
The character dependency in Theorem \ref{thm:5} might seem to generous on first sight especially with the $\GL_2$ picture in mind. The purpose of this chapter is to argue that the dependency in general can be as bad as $|\psi|_p^{\Theta(N^2)}$ and to prove Theorem \ref{thm:6} giving a result with a lighter dependency but a worse power saving.

\begin{proof}[Proof of Theorem \ref{thm:6}] Using the same notation and ideas as before, let $m:=\max_{i,j} m_{ij}$. Then there is a $k$ s.t. 
\[
    -\sum_{b_{ij}\in B_k} b_{i,j}\geq m\ .
\]
As $|B_k|\leq N$, there is a pair $i,j$ s.t. the saving in the $c_{i,j}$ is at least $p^{-b_{i,j}/2}\geq p^{\frac{m}{2N}}$. Using $\sum_i^n r_i\leq l(w)\cdot m$ this saving is at least $p^{\frac{1}{2N\cdot l(w)}\sum_i^n r_i}$. As this is only considering one sum, this can only be affected by one character and because the $p$-power of an edge affected by $\psi_i$ is at least $p^{-r_i}$, the lost saving is bounded by $p^{r_i/2}$.
\end{proof}

In the $\GL_2$-case the dependency on the character can be understood through the following equality
\[
    S(m,n,c)=p^{-1}S(pm,pn,pc)\ ,
\]
which indicates that assuming a power saving of $\alpha$ multiplying $m$ and $n$ with $p$ increases the sum by $p^{\alpha}$. 
To get a grasp on the same effect in general let $w_l$ be the long Weyl element for $\GL_{N+1}$ and assume all the $m_{i,j}$ are at least $1$. 

Then for $1\leq k\leq N$ the following holds
\[
    \Kl_p(\underline m,\psi,\psi^\prime,w) = p^{-(N+1-k)\cdot k} \Kl_p(\Tilde{\underline m},\Tilde{\psi},\Tilde{\psi}^\prime,w)
\]
for
\[
    \Tilde{m}_{ij}=\begin{cases} m_{ij}+1 & \text{ if } |i-j|=k-1\\
    m_{ij} &\text{ else ,}
    \end{cases}\text{, }\Tilde{\psi}_i = \begin{cases}p\cdot \psi_i &\text{ if }i=k\\
    \psi_i &\text{ else}
    \end{cases} \text{ and } \Tilde{\psi}^\prime_i = \begin{cases}p\cdot \psi^\prime_i &\text{ if }i=N+1-k\\
    \psi^\prime_i &\text{ else .}
    \end{cases}
\]
This indicates assuming a power saving of $\alpha$ that in the worst case $i=(N+1)/2$ multiplying $\psi_i$ and $\psi^\prime_{N+1-i}$ by $p$ increases the sum by $p^{\alpha (N+1)^2/4}$. It also shows that not all $\psi_i$ have the same effect. Furthermore, multiplying all $\psi_i$ and $\psi^\prime_i$ by $p$ and increasing all the $m_{ij}$ by one increases the sum by a factor of $p^{(N^3+3N^2+2N)/6}$. This implies that any uniform power saving of the form $\frac{1}{o(N^3)}$ as for example the one in Theorem \ref{thm:6} has to have a character dependency that also depends on $N$. Only using this example one could still hope to decrease the exponent $l(w)/2$ to something linear in $N$ but for other Weyl elements or $m_{ij}=0$ the picture becomes less clear.

\subsection{Congruence Subgroups}
In many instances, the Kloosterman sums appearing in applications are not over the whole group $\GL_{N+1}(\IZ_p)$ but over a congruence subgroup, see for example \cites{AB24,Bl23,Mi24}.
In this section, Theorem \ref{thm:5} and Theorem \ref{thm:6} are proven for the congruence subgroup $\Gamma_0(q)$, which is defined as
\[
    \Gamma_0(q):=\{\gamma\in\GL_{N+1}(\IZ_p): \gamma\equiv \begin{psmallmatrix}\gamma^\prime & v\\
    0 & *
    \end{psmallmatrix} \mod q, \text{ for a } N\times N\text{ matrix } \gamma^\prime\},
\]
where $q=p^l$.
\begin{proof}
    The fixed choice of a reduced expression for the Weyl elements makes it easier to work with a modified congruence subgroup and then translate the result. For a matrix $A$ define $A^{T_w}:=(w_lAw_l)^T$, where $w_l$ is the long Weyl element, and write
    \[
        \Gamma := \{A^{T_w}:A\in\Gamma_0(q)\}.
    \]
    Note the following identities $(A^{T_w})^{T_w}=A$, $(AB)^{T_w}=B^{T_w}A^{T_w}$, $U^{T_w}=U$, $U_w^{T_w}=U_{w^{-1}}$, and $w^{T_w}=w$ for an admissible Weyl element $w$ resulting in
    \[
        U(\IZ_p)\backslash \big(U(\IQ_p)\fc wU(\IQ_p)\cap \Gamma_0(q)\big)/U_w(\IZ_p)=\Big(U_{w^{-1}}(\IZ_p)\backslash \big(U(\IQ_p)w\fc^{T_w}U(\IQ_p)\cap \Gamma\big)/U(\IZ_p)\Big)^{T_w}.
    \]
    For a character $\psi$ of $U(\IQ_p)$ trivial on $U(\IZ_p)$ define $\psi^{T_w}$ by
    \[
        \psi^{T_w}(A):=\psi(A^{T_w}),
    \]
    which is well-defined because $\psi(AB)=\psi(BA)$.

    For any admissible Weyl element $w$ and diagonal element $\fc$ the Kloosterman sums over the two congruence subgroups can be related through
    \[
        \Kl^{\Gamma_0(q)}_p(\psi,\psi^\prime, \fc\cdot w)={\Kl^\prime}^\Gamma_p((\psi^\prime)^{T_w},\psi^{T_w}, w\fc^{T_w}w^{-1}\cdot w)={\Kl}^\Gamma_p(\overline\psi^{T_w},(\overline \psi^\prime)^{T_w}, \fc^{-T_w}\cdot w^{-1}),
    \]
    where the second equality follows from Remark \ref{rem:1}.
    
    Because $\fc^{-T_w}$, $\overline\psi^{T_w}$ and $(\overline \psi^\prime)^{T_w}$ only differ from $\fc$, $\psi$, $\psi^\prime$ by a permutation of the $r_k$ in the exponent vector and a permutation and sign change of the $\psi_i$ resp.\ $\psi_i^\prime$ and $l(w)=l(w^{-1})$, the bounds claimed in Theorem \ref{thm:5} and Theorem \ref{thm:6} remain the same for the new sums and it is enough to prove the theorem for $\Gamma$ instead of $\Gamma_0(q)$.

    The unique reduced expression for an admissible Weyl element was defined such that there is exactly one factor $s_1$ in this expression. The factors $b_\alpha(a)$ satisfy 
    \[
        b_{\alpha_i}(a)\in\Gamma
    \]
    for all $i>1$ and
    \[
        b_{\alpha_1}(a)\in\Gamma \iff \mu(a)\geq l.
    \]
    Thus, the factors of $b_{\underline\alpha}(a)$ except one are always in $\Gamma$ and $U(\IZ_p)\subset\Gamma$. Therefore, $b_{\underline\alpha}(a)\in\Gamma$ iff the factor corresponding to $s_1$ is in $\Gamma$. Let $\alpha_{1k}$ be the corresponding element in $R(w^{-1})$. Then $b_{\underline\alpha}(a)\in\Gamma$ iff $m_{1k}\geq l$. This implies
    \[
        \Kl^{\Gamma}_p(\psi,\psi^\prime, \fc\cdot w)=\sum_{\underline m\in\CM_{w}(r), m_{1k}\geq l}\Kl_p(\underline m,\psi,\psi^\prime,w)
    \]
    and it can be bounded using the bounds obtained in Section \ref{sec:4.2}, which concludes the proof of Theorem~\ref{thm:5}.
\end{proof}


\begin{bibdiv}
\begin{biblist}

\bib{AB24}{article}{
author = {Assing,Edgar},
author={Blomer,Valentin},
title = {{The density conjecture for principal congruence subgroups}},
volume = {173},
journal = {Duke Math. J.},
number = {7},
publisher = {Duke University Press},
pages = {1359 -- 1426},
year = {2024}
}

\bib{BFG88}{article}{
author = {Bump, Daniel},
author={Friedberg, Solomon},
author={Goldfeld,Dorian},
journal = {Acta Arith.},
number = {1},
pages = {31-89},
title = {{Poincaré series and {K}loosterman sums for $\SL(3, \IZ)$}},
volume = {50},
year = {1988}
}

\bib{Bl23}{article}{
    author = {Blomer,Valentin},
    title = {{Density theorems for {$\GL(n)$}}},
    journal = {Invent. math.},
    volume = {232},
    year = {2023},
    pages = {683-711}
}

\bib{BM24}{article} {
    author = {Blomer, Valentin},
    author = {Man, Siu Hang},
     title = {{Bounds for {K}loosterman sums on {${\rm GL}(n)$}}},
   journal = {Math. Ann.},
    volume = {390},
      year = {2024},
    number = {1},
     pages = {1171--1200}
}

\bib{Da93}{article}{
 author = {Dąbrowski,Romuald},
 journal = {Trans. Amer. Math. Soc.},
 number = {2},
 pages = {757--769},
 publisher = {American Mathematical Society},
 title = {{Kloosterman Sums for Chevalley Groups}},
 volume = {337},
 year = {1993}
}

\bib{DF97}{article}{
author = {Dąbrowski, Romuald},
author={Fisher,Benji},
journal = {Acta Arith.},
number = {1},
pages = {1-48},
title = {{A stationary phase formula for exponential sums over ${\IZ/p^{m}\IZ}$ and applications to GL(3)-Kloosterman sums}},
volume = {80},
year = {1997}
}

\bib{DR98}{article}{
    title = {{Kloosterman {S}ets in {R}eductive {G}roups}},
    journal = {J. Number Theory},
    volume = {73},
    number = {2},
    pages = {228-255},
    year = {1998},
    author = {Dąbrowski, Romuald},
    author = {Reeder, Mark}
}

\bib{Fr87}{article}{
author = {Friedberg, Solomon},
year = {1987},
journal = {Math. Z.},
pages = {165-188},
title = {{Poincaré Series for $\GL(n)$: Fourier Expansion, Kloosterman Sums, and Algebreo-Geometric Estimates.}},
volume = {196}
}

\bib{GSW21}{article}{
title={{An orthogonality relation for $\mathrm {GL}(4, \mathbb R) $ (with an appendix by Bingrong Huang)}},
volume={9},
journal={Forum Math. Sigma},
author={Goldfeld, Dorian},
author={Stade, Eric},
author={Woodbury, Michael},
year={2021},
pages={e47}
}

\bib{JLXY06}{book}{
author = {Ji, Lizhen},
author = {Li, Jian-Shu},
author = {Xu, H. W},
author = {Yau, Shing-Tung},
address = {Providence, RI},
isbn = {9780821841983},
publisher = {American Math. Soc.},
series = {AMS IP studies in advanced mathematics 37},
title = {{Lie groups and automorphic forms}},
year = {2006}
}
  
\bib{Kl26}{article}{
author = {Kloosterman, Hendrik},
title = {{On the representation of numbers in the form $ax^2+by^2+cz^2+dt^2$}},
volume = {49},
journal = {Acta Math.},
number = {3-4},
publisher = {Institut Mittag-Leffler},
pages = {407 -- 464},
year = {1926}
}
 
\bib{Ma22}{article}{
author = {Man, Siu},
year = {2022},
pages = {707–753},
title = {{Symplectic Kloosterman sums and Poincaré series}},
volume = {57},
journal = {Ramanujan J.}
}

\bib{Mi24}{article}{
title = {{Bessel functions and Kloosterman integrals on GL(n)}},
journal = {J. Funct. Anal.},
volume = {286},
number = {4},
pages = {110267},
year = {2024},
author = {Miao, Xinchen}
}

\bib{Mu24}{arXiv}{
      title={{Kloosterman sums on orthogonal groups}}, 
      author={Mujdei,Catinca},
      date={2024},
      eprint={2410.20569},
    archiveprefix={arXiv},
    primaryclass={math.AT},
}
 
\bib{Po11}{article}{
author = {Poincaré, H.},
journal = {Annales de la Faculté des sciences de Toulouse : Mathématiques},
pages = {125-149},
title = {Fonctions modulaires et fonctions fuchsiennes},
volume = {3},
year = {1911},

}
 
\bib{Sa32}{article}{
author = {Salié, Hans},
journal = {Math. Z.},
pages = {91-109},
title = {{Über die Kloostermanschen Summen $S(u,v;q)$}},
volume = {34},
year = {1932},
}

\bib{St87}{article}{
author = {Stevens, Glenn},
journal = {Math. Ann.},
pages = {25-52},
title = {{Poincaré {S}eries on $\GL(r)$ and {K}loostermann {S}ums.}},
volume = {277},
year = {1987},
}

\bib{We48}{article}{
 author = {Weil,André},
 journal = {Proc. Natl. Acad. Sci. USA},
 number = {5},
 pages = {204--207},
 publisher = {National Academy of Sciences},
 title = {{On Some Exponential Sums}},
 volume = {34},
 year = {1948}
}

\end{biblist}
\end{bibdiv}

\end{document}